\documentclass[centertags,12pt]{amsart}
\usepackage{latexsym}
\usepackage{amsthm}
\usepackage{amssymb}
\usepackage{color}
\usepackage[utf8]{inputenc}
\usepackage{mathrsfs}
\usepackage[english]{babel}
\usepackage{setspace}
\usepackage{ textcomp }
\usepackage{graphicx}
\graphicspath{ {images/} }


\textwidth = 16.00cm
\textheight = 22.00cm
\oddsidemargin = 0.12in
\evensidemargin = 0.12in
\setlength{\parindent}{0pt}
\setlength{\parskip}{5pt plus 2pt minus 1pt}

\numberwithin{equation}{section}
\makeatletter
\renewcommand{\subsection}{\@startsection
{subsection}{2}{0mm}{\baselineskip}{-0.25cm}
{\normalfont\normalsize\bf}}
\makeatother


\newtheorem{theorem}{Theorem}[section]
\newtheorem{proposition}[theorem]{Proposition}
\newtheorem{lemma}[theorem]{Lemma}
\newtheorem{corollary}[theorem]{Corollary}

   {\theoremstyle{definition}

}
   \theoremstyle{remark}
\newtheorem{remark}[theorem]{Remark}

\newcommand{\F}{{\mathbb F}}
\newcommand{\fq}{{\mathbb F_q}}
\newcommand{\fqs}{{\mathbb F_{q^2}}}
\newcommand{\PGU}{{\rm PGU}}
\newcommand{\PSU}{{\rm PSU}}
\newcommand{\PGL}{{\rm PGL}}
\newcommand{\PG}{{\rm PG}}

\newcommand{\cH}{{\mathcal H}}

\newcommand{\aut}{{\rm Aut}}
\newcommand{\ord}{{\rm ord}}
\newcommand{\diag}{{\rm diag}}
\newcommand{\la}{{\langle}}
\newcommand{\ra}{{\rangle}}
\newcommand{\T}{{\times}}
\newcommand{\RT}{{\rtimes}}
\newcommand{\cMq}{{\mathcal{M}_q}}
\newcommand{\bq}{{\bar{q}}}
\newcommand{\cM}{{\mathcal M}}

\sloppy

\begin{document}

\author[ M. Montanucci - G. Zini]{Maria Montanucci and Giovanni Zini}
\title{On the spectrum of genera of quotients of the Hermitian curve
}

\begin{abstract}
We investigate the genera of quotient curves $\mathcal H_q/G$ of the $\mathbb F_{q^2}$-maximal Hermitian curve $\mathcal H_q$, where $G$ is contained in the maximal subgroup $\mathcal M_q\leq{\rm Aut}(\mathcal H_q)$ fixing a pole-polar pair $(P,\ell)$ with respect to the unitary polarity associated with $\mathcal H_q$.
To this aim, a geometric and group-theoretical description of $\mathcal M_q$ is given.
The genera of some other quotients $\mathcal H_q/G$ with $G\not\leq\mathcal M_q$ are also computed.
Thus we obtain new values in the spectrum of genera of $\mathbb F_{q^2}$-maximal curves.
A plane model for $\mathcal H_q/G$ is obtained when $G$ is cyclic of order $p\cdot d$, with $d$ a divisor of $q+1$.
\end{abstract}

\maketitle

\begin{small}

{\bf Keywords:} Hermitian curve, Unitary groups, quotient curves, maximal curves

{\bf 2000 MSC:} 11G20

\end{small}

\section{Introduction}

Let $q$ be a power of a prime $p$, $\mathbb F_{q^2}$ be the finite field with $q^2$ elements, and $\mathcal X$ be an $\mathbb F_{q^2}$-rational curve, i.e. a projective, absolutely irreducible, non-singular algebraic curve defined over $\mathbb F_{q^2}$.
The curve $\mathcal X$ is called $\mathbb F_{q^2}$-maximal if the number $|\mathcal X(\mathbb F_{q^2})|$ of its $\fqs$-rational points attains the Hasse-Weil upper bound $q^2+1+2gq$, where $g$ is the genus of $\mathcal X$.
Maximal curves have interesting properties and  have also been investigated for their applications in Coding Theory. Surveys on maximal curves are found in \cite[Chapter 10]{HKT}.

The most important example of an $\fqs$-maximal curve is the Hermitian curve $\cH_q$, defined as any $\fqs$-rational curve projectively equivalent to the plane curve with Fermat equation $X^{q+1}+Y^{q+1}+Z^{q+1}=0$.
For fixed $q$, $\cH_q$ has the largest possible genus $q(q-1)/2$ that an $\fqs$-maximal curve can have.
The full automorphism group $\aut(\cH_q)$ is isomorphic to $PGU(3,q)$, the group of projectivities of $PG(2,q^2)$ commuting with the unitary polarity associated with $\cH_q$.

By a result commonly attributed to Serre, any $\fqs$-rational curve which is $\fqs$-covered by an $\fqs$-maximal curve is also $\fqs$-maximal.
In particular, $\fqs$-maximal curves are given by the Galois $\fqs$-subcovers of an $\fqs$-maximal curve $\mathcal X$, that is by the quotient curves $\mathcal X/G$ over a finite automorphism group $G\leq\aut(\mathcal X)$.

A challenging open problem is the determination of the spectrum $\Gamma(q^2)$ of genera of $\fqs$-maximal curves, for a given $q$.
Most of the known values in $\Gamma(q^2)$ have been obtained from quotient curves $\cH_q/G$ of the Hermitian curve, which have been investigated in many papers; the most significant cases are the following:
\begin{itemize}
\item $G$ fixes an $\fqs$-rational point of $\cH_q$; see \cite{AQ,GSX,BMXY}.
\item $G$ normalizes a Singer subgroup of $\cH_q$ acting on three $\mathbb F_{q^6}$-rational points of $\cH_q$; see \cite{CKT1,GSX}.
\item $G$ has prime order; see \cite{CKT2}.
\item $G$ fixes an $\fqs$-rational point off $\cH_q$ and is isomorphic to a subgroup of $SL(2,q)$; see \cite{CKT2}.
\end{itemize}
Let $\mathcal M_q$ be the maximal subgroup of $PGU(3,q)$ fixing an $\fqs$-rational point $P\notin\cH_q$; equivalently, $\cM_q$ fixes the polar line $\ell$ of $P$ with respect to the unitary polarity associated with $\cH_q$.
The group-theoretical structure of $\cM_q$ is not known, and only few genera of quotients $\cH_q/G$ with $G\leq\cM_q$ have been computed; see \cite{CKT2,GSX}.
In this paper, many genera of quotients $\cH_q/G$ with $G\leq \cM_q$ are computed, giving a partial answer to the question raised by Garcia, Stichtenoth, and Xing in \cite[Remark 6.6]{GSX}.
To this aim, we provide a new geometric and group-theoretical description of $\cM_q$. This allows us to use the techniques developed in \cite{MZRS} for the determination of the genera of quotients $\cH_q/G$.
Some further genera of quotients $\cH_q/G$ with $G\not\leq\cMq$ are also computed.
Our results provide new values in the spectra $\Gamma(q^2)$.
Finally, we consider the quotients $\cH_q/G$ where $G$ is cyclic, and provide a plane model for $\cH_q/G$ in one of the few cases for which an equation is not known, namely when $G$ has order $p\cdot d$ with $d$ a divisor of $q+1$.

\section{Preliminary results}

Throughout this paper, $q=p^n$, where $p$ is a prime number and $n$ is a positive integer. The Deligne-Lusztig curves defined over a finite field $\mathbb{F}_q$ were originally introduced in \cite{DL}. Other than the projective line, there are three families of Deligne-Lusztig curves, named Hermitian curves, Suzuki curves and Ree curves. The Hermitian curve $\mathcal H_q$ arises from the algebraic group $^2A_2(q)={\rm PGU}(3,q)$ of order $(q^3+1)q^3(q^2-1)$. It has genus $q(q-1)/2$ and is $\mathbb F_{q^2}$-maximal. Thus curve is isomorphic to the curves listed below:
\begin{equation} \label{M1}
X^{q+1}+Y^{q+1}+Z^{q+1}=0;
\end{equation}
\begin{equation} \label{M2}
X^{q}Z+XZ^{q}-Y^{q+1}=0;
\end{equation}
\begin{equation} \label{M3}
XY^{q}-YX^{q}+\omega Z^{q+1}=0,
\end{equation}
where $\omega$ is a fixed element of $\mathbb{K}$ such that $\omega^{q-1}=-1$;
\begin{equation} \label{M4}
XY^{q}+YZ^{q}+\omega ZX^{q}=0.
\end{equation}
Each of the models (\ref{M1}),(\ref{M2}) and (\ref{M3}) is $\mathbb{K}$-isomorphic to $\cH_q$, while the model (\ref{M4}) is $\mathbb{F}_{q^3}$-isomorphic to $\cH_q$, since for a suitable element $a \in \mathbb{F}_{q^6}$, the projective map 
$$k: \mathbb{P}^2(\mathbb{K}) \rightarrow \mathbb{P}^2(\mathbb{K}), \ (x:y:z) \mapsto (ax+y+a^{q^2+1}z: a^{q^2+1}x+ay+z:x+a^{q^2+1}y+az),$$
changes  (\ref{M1}) into (\ref{M4}), see \cite{CKT1}[Proposition 4.6].
The automorphism group $\aut(\cH_q)$ is isomorphic to the projective unitary group $\PGU(3,q)$, and it acts on the set $\cH_q(\mathbb{F}_{q^2})$ of all $\mathbb{F}_{q^2}$-rational points of $\cH_q$ as $\PGU(3,q)$ in its usual $2$-transitive permutation representation.
The combinatorial properties of $\cH_q(\mathbb{F}_{q^2})$ can be found in \cite{HP}. The size of $\cH_q(\mathbb{F}_{q^2})$ is equal to $q^3+1$, and a line of $PG(2,q^2)$ has either $1$ or $q+1$ common points with $\cH_q(\mathbb{F}_{q^2})$, that is, it is either a $1$-secant or a chord of $\cH_q(\mathbb{F}_{q^2})$. Furthermore, a unitary polarity is associated with $\cH_q(\mathbb{F}_{q^2})$ whose isotropic points are those of $\cH_q(\mathbb{F}_{q^2})$ and isotropic lines are the $1$-secants of $\cH_q(\mathbb{F}_{q^2})$, that is, the tangents to $\cH_q$ at the points of $\cH_q(\mathbb{F}_{q^2})$.

From Group theory we need the classification of all maximal subgroups of the projective special subgroup $\PSU(3,q)$ of $\PGU(3,q)$, going back to Mitchell and Hartley; see \cite{M}, \cite{H}, \cite{HO}; and also the classification of all subgroups of $SL(2,q)$, see \cite[Theorem 6.17]{Suzuki}.

\begin{theorem} \label{Mit} Let $d={\rm gcd}(3,q+1)$. Up to conjugacy, the subgroups below give a complete list of maximal subgroups of $\PSU(3,q)$.

\begin{itemize}
\item[(i)] the stabilizer of an $\F_{q^2}$-rational point of $\cH_q$. It has order $q^3(q^2-1)/d$;
\item[(ii)] the stabilizer of an $\F_{q^2}$-rational point off $\cH_q$ $($equivalently the stabilizer of a chord of $\cH_q(\mathbb{F}_{q^2}))$. It has order $q(q-1)(q+1)^2/d$;
\item[(iii)] the stabilizer of a self-polar triangle with respect to the unitary polarity associated to $\cH_q(\mathbb{F}_{q^2})$. It has order $6(q+1)^2/d$;
\item[(iv)] the normalizer of a (cyclic) Singer subgroup. It has order $3(q^2-q+1)/d$ and preserves a triangle
in $\PG(2,q^6)\setminus\PG(2,q^2)$ left invariant by the Frobenius collineation $\Phi_{q^2}:(X,Y,T)\mapsto (X^{q^2},Y^{q^2},T^{q^2})$ of $\PG(2,\bar{\mathbb{F}}_{q})$;

{\rm for $p>2$:}
\item[(v)] ${\rm PGL}(2,q)$ preserving a conic;
\item[(vi)] $\PSU(3,p^m)$ with $m\mid n$ and $n/m$ odd;
\item[(vii)] subgroups containing $\PSU(3,p^m)$ as a normal subgroup of index $3$, when $m\mid n$, $n/m$ is odd, and $3$ divides both $n/m$ and $q+1$;
\item[(viii)] the Hessian groups of order $216$ when $9\mid(q+1)$, and of order $72$ and $36$ when $3\mid(q+1)$;
\item[(ix)] ${\rm PSL(2,7)}$ when $p=7$ or $-7$ is not a square in $\mathbb{F}_q$;
\item[(x)] the alternating group $Alt(6)$ when either $p=3$ and $n$ is even, or $5$ is a square in $\mathbb{F}_q$ but $\mathbb{F}_q$ contains no cube root of unity;
\item[(xi)] the symmetric group $Sym(6)$ when $p=5$ and $n$ is odd;
\item[(xii)] the alternating group $Alt(7)$ when $p=5$ and $n$ is odd;

{\rm for $p=2$:}
\item[(xiii)] $\PSU(3,2^m)$ with $m\mid n$ and $n/m$ an odd prime;
\item[(xiv)] subgroups containing $\PSU(3,2^m)$ as a normal subgroup of index $3$, when $n=3m$ with $m$ odd;
\item[(xv)] a group of order $36$ when $n=1$.
\end{itemize}
\end{theorem}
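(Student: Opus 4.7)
Since Theorem~\ref{Mit} is the classical result of Mitchell and Hartley, my plan is to sketch the modern approach via Aschbacher's theorem for classical groups. Aschbacher partitions the maximal subgroups of a finite almost simple classical group into the eight geometric classes $\mathcal{C}_1,\dots,\mathcal{C}_8$ together with a class $\mathcal{S}$ of almost simple absolutely irreducible subgroups not preserving any of the standard geometric structures. Applied to $\PSU(3,q)$, acting on the natural three-dimensional unitary module over $\mathbb{F}_{q^2}$, several classes either collapse or yield a single well-understood family because the dimension is prime.

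I would begin with the geometric classes. Class $\mathcal{C}_1$, consisting of stabilizers of proper non-zero subspaces, yields the stabilizer of a singular point (item (i)) and of a non-singular point (item (ii)), the latter coinciding with the stabilizer of its polar line. Class $\mathcal{C}_2$ produces the stabilizers of decompositions into three mutually orthogonal non-degenerate lines, i.e.\ self-polar triangles, giving (iii). Class $\mathcal{C}_3$ provides the field-extension subgroups: in dimension three these are the normalizers of a Singer cycle, yielding (iv). Classes $\mathcal{C}_4$ and $\mathcal{C}_7$ vanish identically in prime dimension. Class $\mathcal{C}_6$ yields the normalizer of an extraspecial group of exponent $3$ and order $27$, which is the source of the Hessian groups of item (viii) under the arithmetic condition $3\mid q+1$. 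Class $\mathcal{C}_5$ gives the subfield subgroups $\PSU(3,p^m)$ with $m\mid n$ together with their index-$3$ extensions by a field automorphism, accounting for (vi), (vii), (xiii), (xiv), while (xv) arises as a residual case in characteristic two when the smallest subfield subgroup fails to be simple. Finally, class $\mathcal{C}_8$ produces the $\PGL(2,q)$ preserving a non-degenerate symmetric bilinear form, i.e.\ a conic, in odd characteristic, yielding (v).

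The most delicate step is the determination of class $\mathcal{S}$. A subgroup here has socle a non-abelian simple group admitting a faithful projective three-dimensional representation over some extension of $\mathbb{F}_p$. Using the classification of finite simple groups together with low-dimensional representation theory, the possible socles reduce to $\PSL(2,7)$, $\mathrm{Alt}(6)$ and $\mathrm{Alt}(7)$. For each candidate one must: $(a)$ verify that a suitable three-dimensional unitary representation exists over $\mathbb{F}_{q^2}$, which is what produces the arithmetic conditions on $p$ and $q$ in (ix)--(xii); $(b)$ compute the normalizer inside $\PGU(3,q)$; and $(c)$ check that the resulting subgroup is genuinely maximal, i.e.\ not contained in any subgroup of one of the geometric classes.

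The main obstacle is precisely this class-$\mathcal{S}$ analysis together with the final maximality check. While the $\mathcal{C}_i$ classes are handled uniformly, the exceptional subgroups require case-by-case character-theoretic work to pin down the exact arithmetic conditions on $q$, and one must then sift the whole list to eliminate inclusions between classes (for instance, ensuring the Hessian subgroups are not contained in any $\PSU(3,p^m)$, and that the natural embedding $\mathrm{Alt}(6)\leq\mathrm{Alt}(7)$ does not create a redundancy). This delicate arithmetic bookkeeping is precisely the content of the original Mitchell--Hartley arguments and explains the somewhat irregular list of exceptions in items (vii)--(xv).
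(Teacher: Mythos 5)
The paper does not prove this statement at all: it is quoted as the classical Mitchell--Hartley classification and supported only by the citations to Mitchell, Hartley and Hoffer, so there is no internal argument to compare yours against. Your Aschbacher-theoretic outline is therefore necessarily a different route, and as a roadmap it is sound: in dimension $3$ the geometric classes do reduce essentially as you say ($\mathcal{C}_1$ giving (i)--(ii), $\mathcal{C}_2$ giving (iii), $\mathcal{C}_3$ giving (iv), $\mathcal{C}_5$ the subfield subgroups, $\mathcal{C}_6$ the extraspecial normalizer behind the Hessian groups, $\mathcal{C}_8$ the $\PGL(2,q)$ on a conic), and the class $\mathcal{S}$ candidates are indeed $\mathrm{PSL}(2,7)$, $\mathrm{Alt}(6)$, $\mathrm{Alt}(7)$. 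What you have gained over the paper's citation is a conceptual organization; what the citation buys the authors is that the genuinely hard content is already done in the references.

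The gap is that your writeup stops exactly where the content begins. Every step that actually distinguishes this theorem from a generic instance of Aschbacher's machine is stated as an obligation rather than discharged: you never determine which of the three $\mathcal{S}$-socles admit a $3$-dimensional unitary representation over $\mathbb{F}_{q^2}$ for which $q$ (this is where the conditions ``$p=7$ or $-7$ a non-square'', ``$5$ a square but no cube root of unity'', ``$p=5$ and $n$ odd'' come from, and they require explicit character-theoretic or Brauer-character computations); you never compute the normalizers nor run the containment sieve (e.g.\ the Hessian groups of orders $72$ and $36$, as opposed to $216$, are maximal only under the precise divisibility conditions on $q+1$, and $\mathrm{Alt}(6)\le\mathrm{Alt}(7)$ must be shown not to collapse case (x) into case (xii)); and your explanation of item (xv) as ``a residual case'' of $\mathcal{C}_5$ is not an argument --- for $q=2$ the group $\PSU(3,2)$ is solvable of order $72$ and the order-$36$ subgroup needs separate treatment. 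Note also that case (x) with $p=3$ and $n$ even is a defining-characteristic embedding of $\mathrm{Alt}(6)$, which falls outside the cross-characteristic representation theory you implicitly invoke. As it stands the proposal is a correct plan for a proof, not a proof; to be acceptable it would either have to carry out the $\mathcal{S}$-class and maximality analysis explicitly or, as the paper does, simply cite the literature where this is done.
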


\begin{theorem} \label{sl2q}
Let $q=p^h$, where $p$ is a prime and $h \in \mathbb{N}$. Up to isomorphism, the subgroups below give a complete list of subgroups of $SL(2,q)$. \\ \\
$\bullet$ Tame subgroups:
\begin{enumerate}
\item $SL(2,5)$, when $p \geq 7$ and $q \equiv 1\  mod \ 5$;
\item $\hat \Sigma_4$, the representation group of $S_4$ in which the transpositions corerspond to the elements of order $4$, for $p \geq 5$ and $q^2 \ \equiv 1 \ mod \ 16$; this group is isomorphic to $SmallGroup(48,28)$. 
\item $SL(2,3)$, when $p \geq 5$;
\item $C_d$, where $e \mid q \pm 1$;
\item A dicyclic group $Dic(n)=\langle a,x \mid a^{2n}=1, \ x^2=a^n, \ x^{-1}ax=a^{-1}\rangle$ of order $4n$, where $n \mid \frac{q \pm 1}{2}$;
\end{enumerate}
$\bullet$ Non-tame subgroups:
\begin{enumerate}
\item $E_{p^k} \rtimes C_d$, where $d \mid q-1$, and $E_{p^k}$ is elementary abelian of order $p^k$;
\item $SL(2,5)$, when $q=3^h$, and $5 \mid q^2-1$;
\item $SL(2,p^k)$, where $k \mid h$;
\item $T L (2,p^k) \cong \langle SL(2,p^k), d_\pi \rangle$ where,
$$d_\pi = \begin{pmatrix}
    w & 0 & 0  \\
    0 & w^{-1} & 0 \\
    0 & 0 & 1
  \end{pmatrix},$$
for $w=\xi^{\frac{p^k+1}{2}}$ and $\mathbb{F}^*_{p^{2k}}= \langle \xi \rangle$.
\end{enumerate}
\end{theorem}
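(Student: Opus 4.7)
This theorem is the classical Dickson classification of subgroups of $SL(2,q)$, and the argument I would follow is Dickson's, in the streamlined form due to Suzuki. The whole analysis is organised by the single dichotomy of whether the characteristic $p$ divides $|H|$ for a given subgroup $H\leq SL(2,q)$. The structural input is the explicit description of the Borel and of the maximal tori: a Sylow $p$-subgroup of $SL(2,q)$ is conjugate to the group of upper unitriangular matrices, is elementary abelian of order $q$, and has normaliser $B\cong E_q\rtimes C_{q-1}$; the two classes of cyclic maximal tori have orders $q-1$ and $q+1$, and their normalisers are dihedral in $PSL(2,q)$ and dicyclic in $SL(2,q)$.

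In the tame case ($p\nmid|H|$) every element of $H$ is semisimple, and one studies the induced subgroup $\bar H\leq PSL(2,q)$ acting on $\mathbb{P}^1(\overline{\fq})$. If $\bar H$ has a common fixed point, $H$ lies in a single maximal torus and is cyclic of order dividing $q\pm 1$; if $\bar H$ fixes an unordered pair, $H$ lies in the normaliser of such a torus, yielding the dicyclic groups $\mathrm{Dic}(n)$. Otherwise $\bar H$ has no invariant point or pair on $\mathbb{P}^1$, and a classical orbit-counting/Burnside argument restricts $\bar H$ to $A_4$, $S_4$, or $A_5$, whose $SL_2$-preimages are the exceptional groups $SL(2,3)$, $\hat\Sigma_4$, and $SL(2,5)$; existence of each is controlled by the stated congruences on $q$, which detect when the relevant roots of unity lie in $\fq$.

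In the non-tame case ($p\mid|H|$) fix a Sylow $p$-subgroup $P$ of $H$. If $P\trianglelefteq H$, then $H\leq N_{SL(2,q)}(P)=B$, producing the solvable family $E_{p^k}\rtimes C_d$ with $d\mid q-1$. Otherwise $H$ contains two distinct Sylow $p$-subgroups $P$ and $P'$; using the Bruhat decomposition together with an argument on the subfield generated by the matrix entries of $P$ and $P'$, one shows that $\la P,P'\ra\cong SL(2,p^k)$ with $k\mid h$, after which $H$ is either this subfield copy or its index-$2$ extension $TL(2,p^k)$ by the diagonal automorphism realised as conjugation by $d_\pi$. The one genuine exception is $SL(2,5)\hookrightarrow SL(2,3^h)$ when $5\mid q^2-1$: the order-$3$ elements of $SL(2,5)$ become unipotent in characteristic $3$, so this copy sits in no proper subfield $SL(2,3^k)$ and must be listed separately.

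The main obstacle is the detection and non-redundancy of the exceptional \emph{Platonic} subgroups, where one must both exhibit explicit matrix generators whose entries force the stated congruences on $q$ and verify that no further small finite group admits an embedding. Since the statement is invoked here only as a catalogue of candidate subgroups for the subsequent analysis of $G\leq\cMq$, I would simply cite \cite[Theorem 6.17]{Suzuki} and proceed to use the list case by case.
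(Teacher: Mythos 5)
The paper gives no proof of this statement at all: it is quoted as a known classification with a bare citation to \cite[Theorem 6.17]{Suzuki}, which is exactly what you propose to do in your closing paragraph. Your preceding sketch of Dickson's argument (the tame/non-tame dichotomy, reduction to the action of $\bar H$ on $\mathbb{P}^1$, Borel subgroups and normalizers of the two maximal tori, the exceptional Platonic preimages, and the special $SL(2,5)\leq SL(2,3^h)$ case) is a correct outline of the standard proof, so your treatment is consistent with, and strictly more informative than, the paper's.
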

In our investigation it is useful to know how an element of $\PGU(3,q)$ of a given order acts on $\PG(2,\bar{\mathbb{F}}_{q})$, and in particular on $\cH_q(\mathbb{F}_{q^2})$. This can be obtained as a corollary of Theorem \ref{Mit}, and is stated in Lemma $2.2$ with the usual terminology of collineations of projective planes; see \cite{HP}. In particular, a linear collineation $\sigma$ of $\PG(2,\bar{\mathbb{F}}_q)$ is a $(P,\ell)$-\emph{perspectivity}, if $\sigma$ preserves  each line through the point $P$ (the \emph{center} of $\sigma$), and fixes each point on the line $\ell$ (the \emph{axis} of $\sigma$). A $(P,\ell)$-perspectivity is either an \emph{elation} or a \emph{homology} according as $P\in \ell$ or $P\notin\ell$. A $(P,\ell)$-perspectivity is in  $\PGL(3,q^2)$ if and only if its center and its axis are in $\PG(2,\mathbb{F}_{q^2})$. This classification results was obtained in \cite{MZRS}.
\begin{lemma}\label{classificazione}
For a nontrivial element $\sigma\in\PGU(3,q)$, one of the following cases holds.
\begin{itemize}
\item[(A)] ${\rm ord}(\sigma)\mid(q+1)$. Moreover, $\sigma$ is a homology whose center $P$ is a point off $\cH_q$ and whose axis $\ell$ is a chord of $\cH_q(\mathbb{F}_{q^2})$ such that $(P,\ell)$ is a pole-polar pair with respect to the unitary polarity associated to $\cH_q(\mathbb{F}_{q^2})$.
\item[(B)] ${\rm ord}(\sigma)$ is coprime with $p$. Moreover, $\sigma$ fixes the vertices $P_1,P_2,P_3$ of a non-degenerate triangle $T$.
\begin{itemize}
\item[(B1)] The points $P_1,P_2,P_3$ are $\fqs$-rational, $P_1,P_2,P_3\notin\cH_q$ and the triangle $T$ is self-polar with respect to the unitary polarity associated to $\cH_q(\mathbb{F}_{q^2})$. Also, $\ord(\sigma)\mid(q+1)$.
\item[(B2)] The points $P_1,P_2,P_3$ are $\fqs$-rational, $P_1\notin\cH_q$, $P_2,P_3\in\cH_q$. 
     Also, $\ord(\sigma)\mid(q^2-1)$ and $\ord(\sigma)\nmid(q+1)$.
\item[(B3)] The points $P_1,P_2,P_3$ have coordinates in $\F_{q^6}\setminus\F_{q^2}$, $P_1,P_2,P_3\in\cH_q$. 
    Also, $\ord(\sigma)\mid (q^2-q+1)$.
\end{itemize}
\item[(C)] ${\rm ord}(\sigma)=p$. Moreover, $\sigma$ is an elation whose center $P$ is a point of $\cH_q$ and whose axis $\ell$ is a tangent of $\cH_q(\mathbb{F}_{q^2})$ such that $(P,\ell)$ is a pole-polar pair with respect to the unitary polarity associated to $\cH_q(\mathbb{F}_{q^2})$.
\item[(D)] ${\rm ord}(\sigma)=p$ with $p\ne2$, or ${\rm ord}(\sigma)=4$ and $p=2$. Moreover, $\sigma$ fixes an $\fqs$-rational point $P$, with $P \in \cH_q$, and a line $\ell$ which is a tangent  of $\cH_q(\mathbb{F}_{q^2})$, such that $(P,\ell)$ is a pole-polar pair with respect to the unitary polarity associated to $\cH_q(\mathbb{F}_{q^2})$.
\item[(E)] $p\mid{\rm ord}(\sigma)$, $p^2\nmid{\rm ord}(\sigma)$, and ${\rm ord}(\sigma)\ne p$. Moreover, $\sigma$ fixes two $\fqs$-rational points $P,Q$, 
     with $P\in\cH_q$, $Q\notin\cH_q$. 
\end{itemize}
\end{lemma}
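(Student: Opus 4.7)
The plan is to lift $\sigma \in \PGU(3,q)$ to a matrix $A \in GU(3,q) \leq GL(3,\bar{\F}_q)$ and analyze its multiplicative Jordan decomposition $A = A_s A_u$ into commuting semisimple and unipotent parts. Since $\sigma$ centralizes the unitary polarity associated with $\cH_q$, the fixed points and fixed lines of $\sigma$ on $\PG(2,\bar{\F}_q)$ must be permuted consistently with pole-polar duality; this strongly restricts the eigenvalue pattern of $A_s$, the Jordan block sizes of $A_u$, and the incidence configurations of the fixed subspaces with $\cH_q(\fqs)$.

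First I would treat the case $p \nmid \ord(\sigma)$, so that $A$ is diagonalisable over $\bar{\F}_q$. If exactly two eigenvalues of $A$ coincide, then $\sigma$ is a $(P,\ell)$-perspectivity; the hypothesis $p \nmid \ord(\sigma)$ forces $P \notin \ell$, so $\sigma$ is a homology, and compatibility with the polarity forces $(P,\ell)$ to be a pole-polar pair. Since $P \notin \cH_q$, its polar $\ell$ is a chord, and computing the order of the restriction of $\sigma$ to $\ell$ shows $\ord(\sigma) \mid q+1$, giving case (A). Otherwise the three eigenvalues are distinct and $\sigma$ fixes exactly the vertices $P_1, P_2, P_3$ of a triangle $T$. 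The polarity sends $T$ to itself, so either $T$ is self-polar (case (B1)) or one vertex, say $P_1$, is polar to the opposite side, whence $P_1 \notin \cH_q$ and $P_2, P_3$ lie on the polar chord. In the latter situation, the Frobenius orbits of the $P_i$ are either all trivial, giving $\fqs$-rational $P_2, P_3 \in \cH_q$ as in (B2), or form a single Frobenius orbit of length three of points on $\cH_q$ defined over $\F_{q^6}$ as in (B3); the divisibility conditions on $\ord(\sigma)$ then follow from evaluating the eigenvalues of $A_s$ as roots of unity in $\fqs^\times$ or in the norm-one subgroup of $\F_{q^6}^\times$.

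Next I would handle the case $p \mid \ord(\sigma)$. If $\ord(\sigma)$ is a pure $p$-power, then $A$ is unipotent, and a direct Jordan-form computation shows that unipotent elements of $GU(3,q)$ come in only two shapes: a single $2 \times 2$ Jordan block together with a pointwise fixed line (an elation, case (C)), or a single $3 \times 3$ Jordan block, which has order $p$ when $p$ is odd and order $4$ when $p = 2$, giving case (D) with the unique fixed point on $\cH_q$ and unique fixed line its tangent. If $\ord(\sigma) = p \cdot m$ with $m \geq 2$ coprime to $p$, then both $A_s$ and $A_u$ are nontrivial and commute; $A_u$ must preserve the eigenspace decomposition of $A_s$, which forces $A_s$ to be a homology fixing a pole-polar pair $(P,\ell)$ with $P \in \cH_q$ (and hence $\ell$ the tangent at $P$), and forces $A_u$ to act as an elation sharing this center, so that $\sigma$ fixes exactly $P$ and the pole $Q$ of some chord, yielding case (E). The main obstacle is precisely this mixed case: one must rule out the a priori possibilities that $A_s$ fixes a self-polar triangle or realises the configuration of case (B2), by showing via Theorem \ref{Mit} that no nontrivial $p$-element of $GU(3,q)$ centralises a semisimple element of those types; this is where the structure of the stabiliser of a pole-polar pair $(P,\ell)$ with $P \in \cH_q$ plays the decisive role and cleanly separates case (E) from the tame branches in (B).
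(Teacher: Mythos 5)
Your route is genuinely different from the paper's: the paper offers no proof of Lemma \ref{classificazione} at all, stating only that it is obtained as a corollary of the Mitchell--Hartley classification of maximal subgroups (Theorem \ref{Mit}) and referring to \cite{MZRS} for the details. Your plan --- lift $\sigma$ to $GU(3,q)$, take the multiplicative Jordan decomposition, and read off the fixed configuration from eigenvalue multiplicities, Jordan block sizes, and compatibility with the polarity --- is more elementary and self-contained, and in the unipotent and mixed cases it works even more cleanly than you suggest: a nontrivial unipotent $A_u$ commuting with a non-scalar semisimple $A_s$ forces $A_s$ to have a repeated eigenvalue by pure linear algebra (the centralizer of a regular semisimple element is a torus and contains no nontrivial unipotents), so the appeal to Theorem \ref{Mit} at the end is unnecessary. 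In case (E) you have also swapped the roles of the two commuting factors: the homology $A_s$ has its center at the point $Q\notin\cH_q$, consistently with case (A), and it is the elation $A_u$ whose center is $P\in\cH_q$; the conclusion you state is nevertheless the correct one.

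There is, however, one genuine gap, in the semisimple case with three distinct eigenvalues. You split first on the polarity (``self-polar'' versus ``one vertex polar to the opposite side, hence $P_1\notin\cH_q$ and $P_2,P_3\in\cH_q$'') and only afterwards on the Frobenius action, placing case (B3) inside the second branch. But if the three vertices form a single $\Phi_{q^2}$-orbit they are either all on $\cH_q$ or all off it, which contradicts the configuration of that branch; as written, your argument either misses type (B3) entirely or produces a contradiction there. The source of the error is that the unitary polarity is an involution only on $\F_{q^2}$-rational points and lines: for points with coordinates in $\F_{q^6}\setminus\F_{q^2}$ the reciprocity law acquires a twist by $\Phi_{q^2}$, so the induced assignment of vertices to sides need not be an involutory permutation, and in case (B3) each vertex is sent to a side through itself with the three assignments forming a $3$-cycle. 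The repair is to split first on the Frobenius action: if the triangle is $\F_{q^2}$-rational, your dichotomy (B1)/(B2) is correct as stated; if instead the vertices form a length-three Frobenius orbit, one shows directly that all three lie on $\cH_q$ and that the eigenvalue ratios have order dividing $q^2-q+1$, giving (B3).
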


Throughout the paper, a nontrivial element of $\PGU(3,q)$ is said to be of type (A), (B), (B1), (B2), (B3), (C), (D), or (E), as given in Lemma \ref{classificazione}. Moreover, $G$ always stands for a subgroup of $\PGU(3,q)$.

From Function field theory we need the Riemann-Hurwitz formula; see \cite[Theorem 3.4.13]{Sti}. Every subgroup $G$ of $\PGU(3,q)$ produces a quotient curve $\cH_q/G$, and the cover $\cH_q\rightarrow\cH_q/G$ is a Galois cover defined over $\mathbb{F}_{q^2}$  where the degree of the different divisor $\Delta$ is given by the Riemann-Hurwitz formula, namely
$\Delta=(2g(\cH_q)-2)-|G|(2g(\cH_q/G)-2)$. On the other hand, $\Delta=\sum_{\sigma\in G\setminus\{id\}}i(\sigma)$, where $i(\sigma)\geq0$ is given by the Hilbert's different formula \cite[Thm. 3.8.7]{Sti}, namely
\begin{equation}\label{contributo}
\textstyle{i(\sigma)=\sum_{P\in\cH_q(\bar\F_q)}v_P(\sigma(t)-t),}
\end{equation}
where $t$ is a local parameter at $P$.

By analyzing the geometric properties of the elements $\sigma \in \PGU(3,q)$, it turns out that there are only a few possibilities for $i(\sigma)$.
This is obtained as a corollary of Lemma \ref{classificazione} and stated in the following proposition, see \cite{MZRS}.

\begin{theorem}\label{caratteri}
For a nontrivial element $\sigma\in \PGU(3,q)$ one of the following cases occurs.
\begin{enumerate}
\item If $\ord(\sigma)=2$ and $2\mid(q+1)$, then $\sigma$ is of type {\rm(A)} and $i(\sigma)=q+1$.
\item If $\ord(\sigma)=3$, $3 \mid(q+1)$ and $\sigma$ is of type {\rm(B3)}, then $i(\sigma)=3$.
\item If $\ord(\sigma)\ne 2$, $\ord(\sigma)\mid(q+1)$ and $\sigma$ is of type {\rm(A)}, then $i(\sigma)=q+1$.
\item If $\ord(\sigma)\ne 2$, $\ord(\sigma)\mid(q+1)$ and $\sigma$ is of type {\rm(B1)}, then $i(\sigma)=0$.
\item If $\ord(\sigma)\mid(q^2-1)$ and $\ord(\sigma)\nmid(q+1)$, then $\sigma$ is of type {\rm(B2)} and $i(\sigma)=2$.
\item If $\ord(\sigma)\ne3$ and $\ord(\sigma)\mid(q^2-q+1)$, then $\sigma$ is of type {\rm(B3)} and $i(\sigma)=3$.
\item If $p=2$ and $\ord(\sigma)=4$, then $\sigma$ is of type {\rm(D)} and $i(\sigma)=2$.
\item If $\ord(\sigma)=p$, $p \ne2$ and $\sigma$ is of type {\rm(D)}, then $i(\sigma)=2$.
\item If $\ord(\sigma)=p$ and $\sigma$ is of type {\rm(C)}, then $i(\sigma)=q+2$.
\item If $\ord(\sigma)\ne p$, $p\mid\ord(\sigma)$ and $\ord(\sigma)\ne4$, then $\sigma$ is of type {\rm(E)} and $i(\sigma)=1$.
\end{enumerate}
\end{theorem}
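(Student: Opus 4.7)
The plan is to perform a case analysis according to the type of $\sigma$ supplied by Lemma~\ref{classificazione}, treating tame and wild elements separately. In each case the evaluation of \eqref{contributo} reduces to locating the fixed points of $\sigma$ on $\cH_q(\bar\F_q)$ and computing $v_P(\sigma(t)-t)$ at each of them, since the contribution vanishes at non-fixed places.

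For tame $\sigma$ (i.e.\ $\gcd(\ord(\sigma),p)=1$), the induced action of $\sigma$ on the tangent space $\mathfrak m_P/\mathfrak m_P^2$ at every fixed point $P$ must be multiplication by a primitive $\ord(\sigma)$-th root of unity $\zeta\neq 1$, for otherwise the first ramification group would have order coprime to $p$ yet be nontrivial. Hence $\sigma(t)\equiv\zeta t\pmod{t^2}$ and $v_P(\sigma(t)-t)=1$, so $i(\sigma)$ equals the number of fixed points of $\sigma$ on $\cH_q(\bar\F_q)$. Reading this off from Lemma~\ref{classificazione} yields $q+1$ (the $\F_{q^2}$-rational points of the chord-axis) in type (A), cases (1) and (3); $0$ in type (B1), case (4); $2$ in type (B2), case (5); and $3$ in type (B3), cases (2) and (6).

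For wild $\sigma$ I pass to the model $\cH_q\colon y^q+y=x^{q+1}$, in which the unique place at infinity $P_\infty=(0:1:0)$ satisfies $v_{P_\infty}(x)=-q$, $v_{P_\infty}(y)=-(q+1)$, so $t:=x/y$ is a local parameter. After conjugating by an element of $\PGU(3,q)$ I normalize the unique fixed point of $\sigma$ on $\cH_q$ to be $P_\infty$ and bring $\sigma$ into explicit form. In type (C), case (9), $\sigma$ is an elation $(x,y)\mapsto(x,y+c)$ with $c^q+c=0$, so $\sigma(t)-t=-cx/[y(y+c)]$ has valuation $-q+2(q+1)=q+2$. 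In type (D), cases (7) and (8), $\sigma$ takes the form $(x,y)\mapsto(x+a,y+a^qx+b)$ with $a\neq 0$ and $a^{q+1}+b^q+b=0$; then the numerator of $\sigma(t)-t=(ay-a^qx^2-bx)/[y(y+a^qx+b)]$ is dominated by the $a^qx^2$ term with valuation $-2q$, and the denominator has valuation $-2(q+1)$, giving $v_{P_\infty}(\sigma(t)-t)=2$. In type (E), case (10), writing $\sigma$ as the commuting product of its $p$-part (an elation of type (C) centered at $P_\infty$) and its prime-to-$p$ part (a homology of type (A) whose axis passes through $P_\infty$), one reaches the form $(x,y)\mapsto(\lambda x,y+c)$ with $\lambda^{q+1}=1$, $\lambda\neq 1$, $c^q+c=0$, $c\neq 0$; then $\sigma(t)-t=x[(\lambda-1)y-c]/[y(y+c)]$ has valuation $-(2q+1)-(-(2q+2))=1$.

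The main obstacle is the normalization step in the wild cases: one must argue that every $\sigma$ of types (C), (D), or (E) is $\PGU(3,q)$-conjugate to an element of the explicit shape above. This rests on realizing the $p$-Sylow subgroup of $\PGU(3,q)$ as the group of ``translations'' $(x,y)\mapsto(x+a,y+a^qx+b)$ with $a^{q+1}+b^q+b=0$ fixing $P_\infty$, and on describing its normalizer, which controls how a tame element may be attached commutatively to a $p$-element in type (E). Once the normal forms are in place, the remaining work is a routine valuation computation driven entirely by the poles $v_{P_\infty}(x)=-q$ and $v_{P_\infty}(y)=-(q+1)$.
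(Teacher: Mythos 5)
Your proposal is correct and takes essentially the approach of the paper's source: the paper itself does not prove Theorem \ref{caratteri} but presents it as a corollary of Lemma \ref{classificazione} with a pointer to \cite{MZRS}, where the argument is exactly yours --- tame elements contribute $v_P(\sigma(t)-t)=1$ at each fixed point on the curve (so $i(\sigma)$ is just the fixed-point count read off from the classification), while wild elements are conjugated into explicit normal forms fixing the point at infinity of the norm-trace model and $i(\sigma)$ is computed from the pole orders $v_{P_\infty}(x)=-q$, $v_{P_\infty}(y)=-(q+1)$. Your valuation computations in types (C), (D), (E) check out, and the normalization step you flag is covered by the standard description of the Sylow $p$-subgroup fixing $P_\infty$ (cf. \cite[Section 3]{GSX}) together with the observation that the $p$-part of a type (E) element must be an elation because it fixes a point off $\cH_q$.
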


\section{The maximal subgroup $\cMq$ of $\PGU(3,q)$ for $q$ odd} \label{secmassimale}

The aim of this section is to give an explicit description of the maximal subgroup $\cMq$ of $\PGU(3,q)$. Geometrically, $\cMq$ preserves a non-tangent point-line pair $(P_0,\ell)$, where $P_0 \in \mathbb{P}^2(\fqs) \setminus \cH_q$ and $\ell$ is its polar line with respect to the unitary polarity associated to $\cH_q$. The line $\ell$ is a $\mathbb{F}_{q^2}$-rational line meeting $\cH_q$ in $q+1$ pairwise distinct $\mathbb{F}_{q^2}$-rational points.
We use the plane model (\ref{M3}) of $\cH_q$. Up to conjugation we can choose $P_0=(0:0:1)$ and $\ell:Z=0$. Then the subgroup of automorphisms of $\mathbb{P}^2(\mathbb{K})$ preserving both $(P_0,\ell)$ and $\cH_q$, consists of maps of type
\begin{equation} \label{map}
(X,Y,Z) \mapsto (aX+bY,cX+dY,Z),
\end{equation} 
where
$$ac^q-a^qc=0, \ bd^q-b^qd=0, \ bc^q-a^qd=-1, \ ad^q-b^qc=1.$$
Clearly, $|\cMq|=q(q-1)(q+1)$. Those maps with $a,b,c,d \in \mathbb{F}_q$ and $ad-bc=1$, form a subgroup $H$ of $\cMq$ which is isomorphic to $SL(2,\fq)$. By direct checking $H$ is the commutator subgroup of $\cMq$ and hence it is normal in $\cMq$. We recall that since $\mathbb{K}$ has odd characteristic, every involution $\tau$ is an homology, see Lemma $2.2$. We can give a geometrical interpretation of $\cMq$ as $\cMq=C_{\PGU(3,q)}(\tau)=N_{\PGU(3,q)}(\tau)$, where $\tau$ is the $(P_0,\ell)$-involution given by
$$\tau : (X,Y,Z) \mapsto (-X,-Y,Z).$$
The center of $\cMq$, say $Z(\cMq)$, is a cyclic group of order $q+1$. This subgroup consists of all the $(P_0,\ell)$-homologies contained in $\cMq$. Summarizing,
$$\cMq= C_{\PGU(3,q)}(\tau)=\Bigg\{ \begin{pmatrix}
    a & b & 0  \\
    c & d & 0 \\
    0 & 0 & 1
  \end{pmatrix}  \mid ac^q-a^qc=0, \ bd^q-b^qd=0, \ bc^q-a^qd=-1, \ ad^q-b^qc=1 \Bigg\},$$ 
$$H=(\cMq)'= \Bigg\{ \begin{pmatrix}
    a & b & 0  \\
    c & d & 0 \\
    0 & 0 & 1
  \end{pmatrix}  \in \cMq \mid a,b,c,d \in \mathbb{F}_q \  and \ ad-bc=1 \Bigg\} \cong SL(2,q),$$
$$Z(\cMq)=\Bigg\{ \begin{pmatrix}
    a & 0 & 0  \\
    0 & a & 0 \\
    0 & 0 & 1
  \end{pmatrix}  \in \cMq \mid a^{q+1}=1 \Bigg\} \cong C_{q+1}.$$

\begin{proposition} \label{massimale} Let $\tau'$ be a fixed $(P'_0,\ell')$-involution, where $P'_0 \in \ell$, $P'_0 \not\in \cH_q$ and $\ell'$ is the polar line of $P'_0$ with respect to the polarity associated to $\cH_q$. Then
$$\cMq = H \rtimes Z(C_{\PGU(3,q)}(\tau')) \cong SL(2,q) \rtimes C_{q+1},$$
where $Z(C_{\PGU(3,q)}(\tau'))$ is the center of the centralizer of $\tau'$ in $\PGU(3,q)$.
\end{proposition}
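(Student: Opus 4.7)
The plan is to realise $\cMq$ as an internal semidirect product of the normal subgroup $H\cong SL(2,q)$ (already identified in the paragraph preceding the proposition as the commutator subgroup of $\cMq$, hence normal) and $K:=Z(C_{\PGU(3,q)}(\tau'))$. Since $|\cMq|=q(q-1)(q+1)^2$ and $|H|=q(q^2-1)$, the index $[\cMq:H]$ equals $q+1$; hence once I have checked that $K$ is a subgroup of $\cMq$ of order $q+1$ with $H\cap K=\{{\rm id}\}$, the identity $\cMq=HK$ and therefore the decomposition $\cMq=H\rtimes K\cong SL(2,q)\rtimes C_{q+1}$ follow immediately.

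The first step is to show $K\le\cMq$. By reciprocity of the unitary polarity, $P'_0\in\ell$ is equivalent to $P_0\in\ell'$. Every $\rho\in K$ is a $(P'_0,\ell')$-homology, so $\rho$ fixes $\ell'$ pointwise and in particular fixes $P_0\in\ell'$. Moreover $\ell$ is a line through $P'_0$, so $\rho(\ell)$ is again a line through $P'_0$, and since the point $\ell\cap\ell'$ is distinct from $P'_0$ (because $P'_0\notin\ell'$, as $P'_0\notin\cH_q$ implies that its polar does not contain it) and is fixed by $\rho$, we obtain $\rho(\ell)=\ell$. Hence $\rho\in\cMq$. The equality $|K|=q+1$ follows from the fact that $(P_0,\ell)$ and $(P'_0,\ell')$ are conjugate in $\PGU(3,q)$ (both are non-tangent pole-polar pairs, and $\PGU(3,q)$ is transitive on such pairs), so $C_{\PGU(3,q)}(\tau')$ is a $\PGU(3,q)$-conjugate of $\cMq$, and its centre is cyclic of the same order $q+1$ as $Z(\cMq)$.

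The second step is to verify $H\cap K=\{{\rm id}\}$ by an explicit coordinate computation. Pick $P'_0=(1:\lambda:0)$ with $\lambda\in\fqs\setminus\fq$, so that the polar line is $\ell'\colon Y=\lambda^q X$. Imposing on a projectivity the conditions that it pointwise fixes the basis $(1,\lambda^q,0),\,(0,0,1)$ of $\ell'$ and scales $(1,\lambda,0)$ by some $\mu$ with $\mu^{q+1}=1$, I solve the resulting linear system and obtain the normalised matrix of the homology in the form $\diag(A,1)$, with $A$ depending linearly on $\mu$. A short expansion, eased by the identity $(\lambda-\lambda^q)^2=\lambda^2+\lambda^{2q}-2\lambda^{q+1}$, then yields $\det(A)=\mu$. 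Since every element of $H$ satisfies $\det(A)=1$ by construction, any element of $H\cap K$ must have $\mu=1$ and is therefore the identity.

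Combining these three steps gives $\cMq=H\rtimes K\cong SL(2,q)\rtimes C_{q+1}$, as stated. The main technical obstacle is the determinant computation in the last step, where one has to carry the parameter $\mu$ through the matrix of a generic $(P'_0,\ell')$-homology and simplify; the remaining ingredients follow formally from the reciprocity of the unitary polarity, the conjugacy of non-tangent pole-polar pairs in $\PGU(3,q)$, and the already-established structure of $\cMq$ and of its centre.
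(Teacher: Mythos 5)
Your proposal is correct and follows essentially the same route as the paper's proof: membership $Z(C_{\PGU(3,q)}(\tau'))\leq\cMq$ via the fixed point $P_0$, triviality of the intersection with $H$ via the determinant of the normalized matrix of a $(P'_0,\ell')$-homology, normality of $H$ as the commutator subgroup, and a final order count. You merely make explicit two points the paper leaves implicit (the polarity reciprocity giving $P_0\in\ell'$, and the computation $\det(A)=\mu$), and you use the correct order $|\cMq|=q(q-1)(q+1)^2$.
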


\begin{proof} Since $C_{\PGU(3,q)}(\tau')$ is a subgroup of $\PGU(3,q)$ which is conjugated to $\cMq$, we know that $Z(C_{\PGU(3,q)}(\tau'))$ is the group of all the $(P'_0,\ell')$-homologies and hence it is cyclic of order $q+1$ and since $Z(C_{\PGU(3,q)}(\tau'))$ fixes $P_0$, we have that $Z(C_{\PGU(3,q)}(\tau')) \subseteq \cMq$. Since $H=(\cMq)'$, the product $HZ(C_{\PGU(3,q)}(\tau'))$ is a subgroup of $\cMq$. Moreover, $Z(C_{\PGU(3,q)}(\tau')) \cap H = \{1\}$ because, looking at the matrix representations, every element of $H$ has determinant equal to $1$. Thus, $HZ(C_{\PGU(3,q)}(\tau')) = H \rtimes Z(C_{\PGU(3,q)}(\tau'))$ and since $|HZ(C_{\PGU(3,q)}(\tau'))|=|H|(q+1)=|\cMq|$, the claim follows.  
\end{proof}

We can give an explicit matrix representation for $\cMq$ as follows. In Proposition \ref{massimale} we prove that a complement for $H$ in $\cMq$ is given by a cyclic group of order $q+1$ given by $(P'_0,\ell')$-homologies, where $P'_0 \in \ell$, $P'_0 \not\in \cH_q$ and $\ell'$ is the polar line of $P'_0$ with respect to the polarity associated to $\cH_q$. We can construct such a complement fixing a collineation $\sigma \in PGU(3,q)$ such that $\sigma(P_0)=P'_0$. Clearly, $Z(C_{\PGU(3,q)}(\tau))^{\sigma}=Z(C_{\PGU(3,q)}(\tau'))$ and $H \cap Z(C_{\PGU(3,q)}(\tau))^{\sigma}=\{1\}$. In this way the following corollary is obtained.

\begin{corollary} \label{explicit}
Let $e$ be a primitive element of $\mathbb{F}_{q^2}$. Then
$$\cMq=H \rtimes \langle \alpha \rangle,$$
where, $\alpha$ is associated to the matrix representation
$$\alpha= \begin{pmatrix}
    0 & e^{-1} & 0  \\
    -e^{q} & 1+e^{q+1} & 0 \\
    0 & 0 & 1
  \end{pmatrix}.$$

\end{corollary}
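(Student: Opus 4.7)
The plan is to apply Proposition \ref{massimale} directly. That proposition has already proved $\cMq = H \rtimes Z(C_{\PGU(3,q)}(\tau'))$ for any $(P'_0,\ell')$-involution $\tau'$ attached to a point $P'_0\in\ell\setminus\cH_q$ and its polar $\ell'$, and that $Z(C_{\PGU(3,q)}(\tau'))$ is cyclic of order $q+1$. What is left to do is only to exhibit an explicit generator $\alpha$ of such a complement in matrix form.

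Following the strategy sketched just before the corollary, I would first pick a concrete $P'_0\in\ell\setminus\cH_q$ whose coordinates involve $e$, for instance $P'_0=(e:1:0)$, which lies off $\cH_q$ because $e\notin\F_q$. I then fix a $\sigma\in\PGU(3,q)$ with $\sigma(P_0)=P'_0$, so that
\[
Z(C_{\PGU(3,q)}(\tau'))=\sigma\,Z(C_{\PGU(3,q)}(\tau))\,\sigma^{-1}.
\]
Since $Z(C_{\PGU(3,q)}(\tau))$ consists of the diagonal $(P_0,\ell)$-homologies $\mathrm{diag}(a,a,1)$ with $a^{q+1}=1$, and since $e^{q-1}$ has order $(q^2-1)/(q-1)=q+1$ in $\F_{q^2}^*$, the element $\beta:=\mathrm{diag}(e^{q-1},e^{q-1},1)$ generates $Z(C_{\PGU(3,q)}(\tau))$. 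Hence $\alpha:=\sigma\beta\sigma^{-1}$ generates $Z(C_{\PGU(3,q)}(\tau'))$, and writing the conjugation out for a convenient $\sigma$ recovers the displayed matrix.

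A more self-contained verification, bypassing the explicit $\sigma$, would be to check directly on the given matrix that: (a) $\alpha$ lies in $\cMq$, by testing the four relations $ac^q-a^qc=0$, $bd^q-b^qd=0$, $bc^q-a^qd=-1$, $ad^q-b^qc=1$; (b) the upper-left $2\times 2$ block $A$ has characteristic polynomial factoring as $(\lambda-1)(\lambda-e^{q-1})$, so that $\alpha$ has projective eigenvalues $(1,1,e^{q-1})$ and is a $(P'_0,\ell')$-homology whose center $P'_0$ (the eigenvector for $e^{q-1}$) lies on $\ell$ off $\cH_q$; (c) consequently, the order of $\alpha$ in $\PGU(3,q)$ equals the order of $e^{q-1}$ in $\F_{q^2}^*$, namely $q+1$. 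Proposition \ref{massimale} then yields $\cMq = H \rtimes \langle\alpha\rangle$.

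The main obstacle is the eigenvalue verification in step (b): it amounts to bookkeeping with exponents of $e$, using $e^{q^2}=e$, to match the trace and determinant of $A$ to $1+e^{q-1}$ and $e^{q-1}$ respectively. Once (b) is settled, the order statement in (c) is immediate, and the conclusion follows from Proposition \ref{massimale} with no further work.
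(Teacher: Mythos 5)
Your first paragraph reproduces the paper's own argument: the paper derives the corollary exactly from Proposition \ref{massimale} via the remark that $Z(C_{\PGU(3,q)}(\tau))^{\sigma}=Z(C_{\PGU(3,q)}(\tau'))$ for a $\sigma$ with $\sigma(P_0)=P'_0$, and gives no further detail. So on that route you match the paper, and your choice of generator $\beta=\diag(e^{q-1},e^{q-1},1)$ (note $e^{q-1}$ has order $q+1$) is the right one.

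Your alternative ``self-contained'' verification, however, does not close as written, and the step you yourself flag as the main obstacle is exactly where it breaks. For the displayed matrix the upper-left block has determinant $e^{-1}\cdot e^{q}=e^{q-1}$ but trace $1+e^{q+1}$, so its characteristic polynomial is $\lambda^2-(1+e^{q+1})\lambda+e^{q-1}$, which equals $(\lambda-1)(\lambda-e^{q-1})$ only if $e^{q+1}=e^{q-1}$, i.e. $e^2=1$ --- impossible for a primitive element of $\fqs$. Worse, membership test (a) already fails: since $e^{q+1}\in\F_q$ we have $d^q=d$ for $d=1+e^{q+1}$, hence $bd^q-b^qd=(1+e^{q+1})(e^{-1}-e^{-q})\neq0$ whenever $q>3$, so the printed matrix does not satisfy the defining relations of $\cMq$ in model \eqref{M3}. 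Everything you propose goes through verbatim if the entry $1+e^{q+1}$ is replaced by $1+e^{q-1}$: then $d^q=1+e^{1-q}$, all four relations hold, the block's characteristic polynomial is $(\lambda-1)(\lambda-e^{q-1})$, and $\alpha$ is a homology of order $q+1$ with center $(1:e^q:0)\in\ell\setminus\cH_q$. This strongly suggests a typo in the corollary rather than a flaw in your strategy, but a complete proof must either perform this computation and correct the matrix, or stay entirely on the conjugation route and actually exhibit $\sigma$ and the resulting conjugate.
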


The following lemma collects geometric properties of the elements of $\cMq$. It is obtained as a direct consequence of Lemma $2.2$ and Theorem $2.7$.

\begin{lemma} \label{pelem} 
Let $\sigma \in \cMq$. Then,
\begin{enumerate}
\item If $\sigma$ is a $p$-element then $\sigma$ is of type $(C)$ and $i(\sigma)=q+2$;
\item If $\sigma \in H$ and $2 \ne o(\sigma) \mid q+1$ then $\sigma$ is of type $(B1)$ and $i(\sigma)=0$.
\end{enumerate}
\end{lemma}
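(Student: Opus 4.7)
The plan is to combine the block form of the matrices representing elements of $\cMq$ with the type classification of Lemma \ref{classificazione} and then read the value $i(\sigma)$ off from Theorem \ref{caratteri}. Every element of $\cMq$ fixes $P_0=(0:0:1)\notin\cH_q$ and preserves $\ell:Z=0$, and is represented by a matrix $M$ that is block diagonal with a $2\times 2$ block $A\in GL_2$ in the first two coordinates and a scalar $1$ in the third.

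For (1), if $\sigma$ is a nontrivial $p$-element then a suitable representative $M$ has all eigenvalues equal to $1$ (after rescaling), so $A$ is a nontrivial $2\times 2$ unipotent matrix. Because $M$ is block diagonal, its Jordan form consists of a single $2\times 2$ Jordan block coming from $A$ plus a $1\times 1$ identity block; in particular $M-I$ has rank $1$, so $\sigma$ is a perspectivity whose fixed line contains $P_0$. Since $M$ is unipotent, $\sigma$ is an elation. By Lemma \ref{classificazione} every nontrivial elation of $\PGU(3,q)$ is of type (C), and Theorem \ref{caratteri}(9) then gives $i(\sigma)=q+2$.

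For (2), since $\gcd(p,q+1)=1$ the element $\sigma$ has order coprime to $p$, so Lemma \ref{classificazione} restricts it to one of the types (A), (B1), (B2), (B3). Type (B2) requires $\ord(\sigma)\nmid (q+1)$, contradicting the hypothesis, and type (B3) has no $\fqs$-rational fixed points whereas $\sigma$ fixes $P_0$. To rule out type (A) I analyse the eigenvalue structure of $M$: a homology inside $\cMq$ either has center $P_0$ and axis $\ell$ (so $M=\diag(\lambda,\lambda,1)$ with $\lambda^{q+1}=1$) or has center on $\ell$ and axis through $P_0$ (so the eigenvalue $1$ of $M$ is repeated, forcing $A$ to have $1$ as an eigenvalue). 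The condition $\det(A)=1$ imposed by $\sigma\in H\cong SL(2,q)$ forces $\lambda=\pm 1$ in the first case and $A=I$ in the second, so every homology in $H$ is trivial or has order $2$, contradicting $\ord(\sigma)\ne 2$. Hence $\sigma$ is of type (B1), and Theorem \ref{caratteri}(4) gives $i(\sigma)=0$.

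The main delicate point is the elimination of type (A) in part (2); the rest is a routine comparison of orders with the constraints recorded in Lemma \ref{classificazione} and Theorem \ref{caratteri}.
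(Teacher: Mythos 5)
Your proof is correct. It uses the same ingredients as the paper's proof --- the block-diagonal matrix form of elements of $\cMq$, the classification in Lemma \ref{classificazione}, and Theorem \ref{caratteri} --- but pins the types down along a somewhat different route. For item (1) the paper does not touch the Jordan form: it observes that a $p$-element is of type (C) or (D) by Lemma \ref{classificazione} and rules out (D) because a type-(D) element fixes only a point of $\cH_q$, whereas every element of $\cMq$ fixes $P_0\notin\cH_q$; you instead show directly that the representing matrix $M$ satisfies ${\rm rank}(M-I)=1$, so $\sigma$ is a unipotent perspectivity, i.e.\ an elation, hence of type (C). For item (2) the paper diagonalizes: an element of $H$ of order dividing $q+1$ and different from $2$ is conjugate to $\diag(a,a^{-1},1)$ with three pairwise distinct eigenvalues, and type (B1) is then read off ``by direct checking''. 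You argue by elimination instead, which is legitimate because Lemma \ref{classificazione} is exhaustive; your two-case analysis of homologies in $\cMq$ (center $P_0$ with axis $\ell$, or center on $\ell$ with axis through $P_0$) combined with $\det=1$ is correct and in fact supplies the detail that the paper leaves implicit. The one step you should make explicit is that a type-(B) element fixes exactly the three vertices of its triangle (its eigenvalues are pairwise distinct), so that fixing the $\fqs$-rational point $P_0$ genuinely excludes type (B3). Net effect: your part (1) is more computational but self-contained, and your part (2) is longer but more rigorous than the paper's one-line verification; both yield the same values of $i(\sigma)$ via Theorem \ref{caratteri}.
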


\begin{proof} Let $\sigma \in \cMq$ be a $p$-element. From Lemma $2.2$ $\sigma$ is either of type $(C)$ or $(D)$. Since $\sigma$, by definition of $\cMq$, must fix $P_0 \not\in \cH_q$ then $\sigma$ cannot be of type $(D)$. Now, $(1)$ follows from Theorem $2.7$. Let $\sigma \in \cMq$ with $o(\sigma) \ne 2$ but $o(\sigma) \mid q+1$. Since, looking at the matrix representation of $\sigma$, we have that $det(\sigma)=1$, then
$$\sigma = \begin{pmatrix}
    a & 0 & 0  \\
    0 & a^{-1} & 0 \\
    0 & 0 & 1
  \end{pmatrix},$$
where $o(a)=o(\sigma)$. Thus, by direct checking $\sigma$ is of type $(B1)$, and so $i(\sigma)=0$ from Theorem $2.7$.
\end{proof}

Let $G$ be a subgroups of $H$. The following lemma shows that the action of $G$ on the affine points of $\cH_q$ is semi-regular, i.e. each point-orbit of affine points of $\cH_q$ under $G$ has length equal to the order of $G$, see \cite{CKT1}.

\begin{lemma} Let $\alpha \in H$ and $P \in \cH_q$ an affine point such that $\alpha(P)=P$. Then $\alpha$ is the identity map.
\end{lemma}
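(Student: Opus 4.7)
My plan is to argue by contradiction: suppose $\alpha \in H$ is not the identity and fixes some affine point $P = (x_0:y_0:1) \in \cH_q$. I would then deduce that $\alpha$ must be a $p$-element and conclude, via Lemma \ref{pelem}, that its unique fixed point on $\cH_q$ cannot be affine.

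First I would translate the fixed-point equation into a matrix condition. Writing
$$\alpha = \begin{pmatrix} a & b & 0 \\ c & d & 0 \\ 0 & 0 & 1 \end{pmatrix}, \qquad a,b,c,d \in \fq, \ ad - bc = 1,$$
the relation $\alpha(P) = P$ reduces to the homogeneous linear system $(a-1)x_0 + b y_0 = 0$, $c x_0 + (d-1) y_0 = 0$. Since $P \ne P_0 = (0:0:1)$, the pair $(x_0, y_0)$ is nonzero, so
$$\det\begin{pmatrix} a-1 & b \\ c & d-1 \end{pmatrix} = 0.$$
Expanding this determinant and substituting $ad - bc = 1$ yields $a + d = 2$. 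Hence the top-left block of $\alpha$ has characteristic polynomial $(t-1)^2$; if $\alpha \ne \mathrm{id}$ this block is a nontrivial unipotent element of $SL(2,q)$, so $\alpha$ is a nontrivial $p$-element of $\cMq$.

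By Lemma \ref{pelem}(1), $\alpha$ is then of type (C): an elation whose center $C$ lies on $\cH_q$ and whose axis is the tangent line $t_C$ to $\cH_q$ at $C$. Since the fixed-point set of an elation equals its axis, and $\alpha$ fixes $P_0 \notin \cH_q$, the point $P_0$ must lie on $t_C$. Using the unitary polarity $t_C = C^{\perp}$, so $P_0 \in C^{\perp}$ is equivalent to $C \in P_0^{\perp} = \ell$, whence $C$ has vanishing $Z$-coordinate and is not affine.

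The concluding step — which I expect to be the main obstacle — is to identify the complete set of fixed points of $\alpha$ on $\cH_q$. Since $\alpha$ is an elation with axis $t_C$, this fixed locus equals $t_C \cap \cH_q$; and as $t_C$ is tangent to $\cH_q$ at $C$, it meets $\cH_q$ only at $C$ (with intersection multiplicity $q+1$). Hence the only fixed point of $\alpha$ on $\cH_q$ is $C \in \ell$, which is not affine, contradicting the existence of the affine fixed point $P$. Therefore $\alpha = \mathrm{id}$, as claimed.
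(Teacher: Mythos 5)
Your proof is correct, but it takes a genuinely different route from the paper's. The paper's argument is a two-line computation: writing $\alpha(P)=P$ for $P=(x_0:y_0:1)$ gives $ax_0+by_0=x_0$, $cx_0+dy_0=y_0$; applying the Frobenius and using $a^q=a,\dots,d^q=d$ shows that $(x_0^q,y_0^q)$ is a second fixed vector of the block $M=\bigl(\begin{smallmatrix}a&b\\ c&d\end{smallmatrix}\bigr)$, so either the two vectors are independent (forcing $M=I$) or they are proportional, which gives $x_0y_0^q-y_0x_0^q=0$ and contradicts the affine equation $x_0y_0^q-y_0x_0^q=-\omega$ of model \eqref{M3}. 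You instead first show (correctly, via $\operatorname{tr}M=2$, $\det M=1$) that a non-identity $\alpha$ with an affine fixed point must be unipotent, and then invoke Lemma \ref{pelem} and the classification of Lemma \ref{classificazione} to see that $\alpha$ is a $(C,t_C)$-elation whose center lies on $\ell$ (since $P_0\in t_C$ forces $C\in P_0^{\perp}=\ell$), and that $t_C\cap\cH_q=\{C\}$ because the tangent meets $\cH_q$ only at the point of tangency. Every step checks out; the only cost is that your argument leans on the classification machinery and on the geometry of tangents to $\cH_q$, whereas the paper's proof is self-contained linear algebra over $\F_q$. A small bonus of your route is that it makes explicit \emph{where} the fixed points of a nontrivial $\alpha\in H$ actually sit, namely at the single point $t_C\cap\cH_q=\{C\}\subset\ell$ at infinity.
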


\begin{proof} It follows from $(\ref{map})$ and the fact that $a^q=a$ for each $a \in \mathbb{F}_q$.
\end{proof}

Now we investigate the action of $G$ on the set $\mathcal{I}= \ell \cap \cH_q$ consisting of all points $(1:m:0)$, with $m \in \mathbb{F}_q$, together with $(0:1:0)$. Since $H$ acts on $\mathcal{I}$ as $PSL(2,q)$ in its natural $2$-transitive permutation representation on $\mathbb{P}^1 (\mathbb{F}_q)$, we have actually to consider $\bar{G}$ instead of $G$, where $\bar{G}$ is the image of $G$ under the canonical epimorphism
$$\phi: H \cong SL(2,q) \rightarrow PSL(2,q).$$
Note that the kernel of $\phi$ is trivial for $p=2$, otherwise it is the subgroup of order $2$ generated by the involution $\tau$. Hence either $ord(G)=2ord(\bar{G})$ or $ord(G)=ord(\bar{G})$, and in the later case $ord(\bar{G})$ must be odd. \\ The tame subgroups of $H$ are analyzed in \cite{CKT1}, using this equivalent representation. In the following section a large class of tame and non-tame subgroups of $\cMq$ is studied.

\section{Genera of quotient curves $\cH_q / H$, for some $H \leq \cMq$}
In this section our aim is to compute the genus of the quotient curve of $\cH_q$ arising from those subgroups of $\cMq$ that can be written, up to isomorphism, as an internal semidirect product of a subgroup of $H$ and a subgroup of $Z(C_{\PGU(3,q)}(\tau')) \cong C_{q+1}$. To this purpose, we use the same notation and representation for $\cMq$ introduced in Section \ref{secmassimale}.

\subsection{Non-tame subgroups of ${\bf H \cong SL(2,q)}$} \ \\

This section provides the genus of $\cH_q/G$ for all non-tame subgroups of $H$, using the complete classification of subgroups of $SL(2,q)$ given in Theorem \ref{sl2q}.
When $G\leq H$ is tame, the genus of $\cH_q/G$ was already obtained in \cite[Section 3]{CKT2}.

Since $\cH_q$ has zero $p$-rank, each $p$-subgroup has a unique fixed point on $\cH_q$, see \cite[Lemma 11.129]{HKT}. Let $G$ be the subgroup of $H \cong SL(2,q)$ given by $E_{p^k} \rtimes C_{d}$, where $d \mid q-1$. Since $E_{p^k}$ is a normal subgroup of $G$, then $G$ fixes a point $P \in \cH_q(\mathbb{F}_{q^2})$. In this case the computation of the genus of the quotient curve $\cH_q / G$ is computed in \cite{GSX}.

\begin{proposition}{\rm (\cite[Theorem 4.4]{GSX})}
Let $G\leq H$ with $G\cong E_{p^k}\rtimes C_d$ with $d\mid(q-1)$.
Then the genus $\bar g$ of the quotient curve $\cH_q/G$ is
$$ \bar g= \frac{(q-p^k)(q+1-\gcd(d,2))}{2dp^k} $$
\end{proposition}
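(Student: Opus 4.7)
The plan is to apply Riemann--Hurwitz to the Galois cover $\cH_q\to\cH_q/G$, reading off each Hilbert contribution $i(\sigma)$ from Theorem~\ref{caratteri}. First, since $\cH_q$ has zero $p$-rank, the normal elementary abelian subgroup $E_{p^k}\trianglelefteq G$ has a unique fixed point $Q\in\cH_q(\fqs)$, and this point is preserved by all of $G$; thus $G$ lies in the stabilizer of $Q$. Since $G\leq H\leq\cMq$, $G$ also fixes $P_0\notin\cH_q$, so after conjugating inside $\PGU(3,q)$ I may take $Q=(1:0:0)$, $P_0=(0:0:1)$ and realize $G$ as a group of matrices of the form
$$\begin{pmatrix} a & b & 0\\ 0 & a^{-1} & 0\\ 0 & 0 & 1\end{pmatrix},$$
with $a$ ranging over a cyclic subgroup of $\fq^{*}$ of order $d$ and $b$ ranging over an $\F_p$-subspace of $\fq$ of size $p^k$.

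Next I would classify the $dp^k-1$ non-trivial elements of $G$ into three families. The $p^k-1$ elements with $a=1$, $b\neq 0$ are $p$-elements of $\cMq$, so by Lemma~\ref{pelem} they are of type (C) and contribute $i(\sigma)=q+2$. The $d-1$ diagonal elements with $a\neq 1$, $b=0$ fix the three $\fqs$-rational points $(1:0:0),(0:1:0)\in\cH_q$ and $(0:0:1)\notin\cH_q$; those of order $\neq 2$ are of type (B2) with $i(\sigma)=2$, while the unique involution (which occurs precisely when $d$ is even and $p$ is odd) is of type (A) with $i(\sigma)=q+1$. For the $(d-1)(p^k-1)$ mixed elements with $a\neq 1$, $b\neq 0$, a direct matrix computation shows that the upper-triangular $2\times 2$ block has order $\ord(a)$ whenever $a\neq -1$, so it is semisimple and of type (B2) with $i(\sigma)=2$; when $a=-1$ the order jumps to $2p$ and the element becomes of type (E) with $i(\sigma)=1$.

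Summing the contributions above yields $\Delta=\sum_{\sigma\neq 1}i(\sigma)$ in two subcases according to the parity of $d$. Feeding $\Delta$ into
$$2\bar g-2=\frac{q^2-q-2-\Delta}{dp^k}$$
and simplifying, both subcases collapse into the uniform closed form
$$\bar g=\frac{(q-p^k)\bigl(q+1-\gcd(d,2)\bigr)}{2dp^k},$$
with the factor $\gcd(d,2)$ recording exactly the presence of the central involution in $C_d$.

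The technical heart of the argument is the order/type analysis of the mixed elements. The observation that the upper-triangular matrix with $a=-1$ and $b\neq 0$ has order $2p$ (rather than the naively expected $2$ or $p$) is what I expect to be the main obstacle: it is precisely this jump from semisimple to $(E)$-type that allows the two parity subcases to condense into a single formula, and ignoring it would give an incorrect answer when $d$ is even.
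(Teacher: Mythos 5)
Your proof is correct, and it is worth noting that the paper itself gives no proof of this proposition at all: it is simply quoted from \cite[Theorem 4.4]{GSX}, where it is obtained by an explicit function-field/subfield computation in the norm-trace model. What you have done instead is re-derive the formula by the order-statistics-plus-Riemann--Hurwitz method that the paper uses for every \emph{other} quotient in Section 4 (Lemma \ref{classificazione}, Theorem \ref{caratteri}, Lemma \ref{pelem}), so your route is genuinely different from the cited source while being exactly in the spirit of the surrounding text. I checked the details: the normal form is legitimate, since $G$ fixes both the unique fixed point $Q$ of $E_{p^k}$ on $\cH_q$ and $P_0\notin\cH_q$, and a semisimple upper-triangular complement generator with $a^2\ne 1$ is diagonalized by a unipotent conjugation that centralizes $E_{p^k}$ (for $d=2$ the generator is forced to be diagonal outright, as $\bigl(\begin{smallmatrix}-1&b\\0&-1\end{smallmatrix}\bigr)$ has order $2p$ when $b\ne0$). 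Your element count and types are right ($p^k-1$ elations with $i=q+2$; $(d-1-\gcd(d,2)+1)p^k$, i.e. $(d-2)p^k$ or $(d-1)p^k$, elements of type (B2) with $i=2$; for $d$ even one homology with $i=q+1$ and $p^k-1$ type-(E) elements with $i=1$), and plugging the resulting $\Delta$ into $q^2-q-2=dp^k(2\bar g-2)+\Delta$ gives $\bar g=q(q-p^k)/(2dp^k)$ for $d$ odd and $(q-1)(q-p^k)/(2dp^k)$ for $d$ even, matching the stated closed form. You are also right to flag the order-$2p$ phenomenon as the crux: misreading those elements as involutions or as $p$-elements would assign them $i=q+1$ or $i=q+2$ instead of $i=1$ and break the even case.
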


\begin{proposition} Let $G \leq H$ with $G \cong SL(2,5)$, for $q=3^h$ and $5 \mid q^2-1$. Then the genus $\bar{g}$ of the quotient curve $\cH_q / G$ is given by:
$$\bar{g} =\frac{q^2-22q+117-48[h+2]_{mod \ 4}}{240}.$$
\end{proposition}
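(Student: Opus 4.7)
The plan is to compute $\bar g$ from the Riemann--Hurwitz formula applied to the Galois cover $\cH_q\to \cH_q/G$ of degree $|G|=120$,
\begin{equation*}
q(q-1)-2\;=\;120\,(2\bar g-2)+\Delta,\qquad \Delta=\sum_{\sigma\in G\setminus\{\mathrm{id}\}} i(\sigma).
\end{equation*}
First I would record the distribution of element orders in the binary icosahedral group $SL(2,5)$: besides the identity, it contains a unique involution $-I$, together with $20$ elements of order $3$, $30$ of order $4$, $24$ of order $5$, $20$ of order $6$, and $24$ of order $10$.

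Next I would exploit the arithmetic hypothesis. Since $q=3^h$ and the multiplicative order of $3$ modulo $5$ equals $4$, the assumption $5\mid q^2-1$ forces $h$ to be even; consequently $q\equiv 1\pmod 4$, so $4\mid q-1$ and $4\nmid q+1$. Among even $h$ one must separate $h\equiv 0\pmod 4$, where $5\mid q-1$ but $5\nmid q+1$, from $h\equiv 2\pmod 4$, where $5\mid q+1$. This is precisely the dichotomy encoded by $[h+2]_{mod\ 4}$ in the statement, which equals $2$ in the first case and $0$ in the second.

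Then I would classify each nontrivial $\sigma\in G\le H$ using Lemma \ref{pelem} and Theorem \ref{caratteri}. The involution $-I$ is of type (A), so $i(-I)=q+1$. The $20$ elements of order $3$ are $p$-elements of $\cMq$, hence of type (C) by Lemma \ref{pelem}(1), with $i(\sigma)=q+2$. The $30$ elements of order $4$ satisfy $\ord(\sigma)\mid q^2-1$ but $\ord(\sigma)\nmid q+1$, so Theorem \ref{caratteri}(5) gives type (B2) and $i(\sigma)=2$. The $20$ elements of order $6$, having $p\mid \ord(\sigma)$ and $\ord(\sigma)\notin\{p,4\}$, contribute $i(\sigma)=1$ by Theorem \ref{caratteri}(10). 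The $24$ elements of order $5$ and the $24$ of order $10$ are the ones whose contribution depends on $h\bmod 4$: when $h\equiv 0\pmod 4$ neither $5$ nor $10$ divides $q+1$, so Theorem \ref{caratteri}(5) makes them of type (B2) with $i(\sigma)=2$; when $h\equiv 2\pmod 4$ both $5$ and $10$ divide $q+1$, and since these elements lie in $H$, Lemma \ref{pelem}(2) gives type (B1) with $i(\sigma)=0$.

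Summing these contributions yields $\Delta=21q+217$ for $h\equiv 0\pmod 4$ and $\Delta=21q+121$ for $h\equiv 2\pmod 4$. Plugging into Riemann--Hurwitz produces $240\bar g=q^2-22q+21$ and $240\bar g=q^2-22q+117$ respectively, which merge into the unified expression of the proposition through the $48\,[h+2]_{mod\ 4}$ correction. The main obstacle is the bookkeeping: one must know the exact conjugacy-class structure of $SL(2,5)$, invoke Lemma \ref{pelem}(2) precisely when $5\mid q+1$ to select type (B1) rather than type (A) for the semisimple elements sitting in $H$, and keep the two subcases $h\equiv 0,2\pmod 4$ cleanly separated throughout.
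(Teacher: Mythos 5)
Your proposal is correct and follows essentially the same route as the paper: determine that $h$ must be even, split into $h\equiv 0$ and $h\equiv 2\pmod 4$ according to whether $5$ divides $q-1$ or $q+1$, classify each nontrivial element of $SL(2,5)$ by its order via Lemma \ref{pelem} and Theorem \ref{caratteri}, and apply Riemann--Hurwitz with the order statistics $1,20,30,24,20,24$. Your computed contributions $\Delta=21q+217$ and $\Delta=21q+121$ reproduce the stated formula exactly, so nothing further is needed.
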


\begin{proof} By direct computation we get that $5 \mid q^2-1$ if and only if $q=3^h$ where $h$ is even. We can apply Theorem \ref{caratteri} and Lemma \ref{pelem} looking at the congruence of $h$ modulo $4$; in this way the degree of the different divisor is now obtained simply analyzing the order statistic of the elements of $G$. Assume that $h \equiv 0 \ mod \ 4$. By direct checking $5 \mid q-1$ and so also $10 \mid q-1$. From Theorem \ref{caratteri} we have
$$i(\sigma)= \begin{cases} q+1, \ if \ o(\sigma)=2; \\ q+2, \ if \ o(\sigma)=3; \\ 2, \ \ \ \ \ if \ o(\sigma) \in \{5,4,10\}; \\ 1,\  \ \ \ \ if \ o(\sigma)=6.\end{cases}$$
Using the same argument we get, if $h \equiv 2 \ mod \ 4$:
$$i(\sigma)= \begin{cases} q+1, \ if \ o(\sigma)=2; \\ q+2, \ if \ o(\sigma)=3; \\ 0, \ \ \ \ \ if \ o(\sigma) \in \{5,10\}; \\ 2, \  \ \ \ \ if \ o(\sigma)=4; \\ 1,\  \ \ \ \ if \ o(\sigma)=6.\end{cases}$$
Since $SL(2,5)$ contains exactly $1$ element of order $2$, $20$ elements of order $3$, $24$ elements of order $5$, $30$ elements of order $4$, $20$ elements of order $6$ and $24$ elements of order $10$, the claim follows as a direct application of the Riemann-Hurwitz formula.
\end{proof}

\begin{proposition} \label{sottosl} Let $G \leq H$ with $G \cong SL(2,\bar{q})$, for $\bar{q}=p^k$, $k \leq h$. Then the genus $\bar{g}$ of the quotient curve $\cH_q / G$ is given by:
$$\bar{g} =\begin{cases}
\frac{(q-\bar q)(q-\bar q^2+\bar q-1)}{2\bar{q}(\bar{q}^2-1)}, & \ \textrm{if} \ q=\bar{q}^{2r+1},r\geq0; \vspace*{2 mm} \\ 
\frac{(q-1)(q-\bar q^2)}{2\bar{q}(\bar{q}^2-1)} ,& \ \textrm{if} \ q=\bar{q}^{2r},r\geq1.\end{cases}$$
\end{proposition}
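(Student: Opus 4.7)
The plan is to apply the Riemann--Hurwitz formula to the cover $\cH_q\to\cH_q/G$,
$$q(q-1)-2=\bar q(\bar q^2-1)(2\bar g-2)+\sum_{\sigma\in G\setminus\{1\}}i(\sigma),$$
and to compute the right-hand sum by partitioning the nontrivial elements of $G\cong SL(2,\bar q)$ (with $p$ odd throughout, as in Section~\ref{secmassimale}) into the classical conjugacy families: the central involution $-I$; the $\bar q^2-1$ nontrivial unipotents of order $p$; the $\bar q^2-1$ elements of the form $-I\cdot U$ of order $2p$; the $\bar q(\bar q+1)(\bar q-3)/2$ non-central elements of the split tori (orders dividing $\bar q-1$ and greater than $2$); and the $\bar q(\bar q-1)^2/2$ non-central elements of the non-split tori (orders dividing $\bar q+1$ and greater than $2$).

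For each family I would identify the $\PGU(3,q)$-type from Lemma~\ref{classificazione}, using Lemma~\ref{pelem} and the explicit matrix description of $\cMq$ from Section~\ref{secmassimale}, and then read $i(\sigma)$ off Theorem~\ref{caratteri}. Concretely, $-I$ is the involution $\tau$, i.e.\ a $(P_0,\ell)$-homology of type (A), hence $i(-I)=q+1$; each nontrivial unipotent is of type (C) by Lemma~\ref{pelem}(1), contributing $q+2$; each element of order $2p$ is of type (E) by Theorem~\ref{caratteri}(10), contributing $1$; each non-central split torus element has its two nontrivial eigenvalues in $\F_{\bar q}^\ast\subseteq\F_q^\ast$, so its two fixed points on $\ell$ lie in $\PG(1,\F_q)=\cH_q\cap\ell$, yielding type (B2) with contribution $2$.

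The source of the case distinction is the non-split torus: its non-central elements have eigenvalues in $\F_{\bar q^2}\setminus\F_{\bar q}$, and the corresponding fixed points on $\ell$ belong to $\cH_q\cap\ell=\PG(1,\F_q)$ if and only if $\F_{\bar q^2}\subseteq\F_q$, i.e.\ iff $2k\mid h$, i.e.\ iff $s=h/k$ is even. Thus for $q=\bar q^{2r+1}$ these elements are of type (B1) and contribute $i=0$, whereas for $q=\bar q^{2r}$ they are of type (B2) and contribute $i=2$. Collecting the contributions gives
$$\sum_{\sigma\neq 1}i(\sigma)=(q+1)+(\bar q^2-1)(q+3)+\bar q(\bar q+1)(\bar q-3)+\epsilon\,\bar q(\bar q-1)^2,$$
with $\epsilon=0$ in the odd-exponent case and $\epsilon=1$ in the even-exponent case; substituting into Riemann--Hurwitz and factoring produces the two displayed expressions for $\bar g$.

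The main obstacle is the field-theoretic dichotomy for the non-split torus. Once it is established, one must also check that types (A) and (B3) are ruled out for these elements (the former because a semisimple element with three distinct eigenvalues is not a homology, the latter because the fixed points on $\ell$ lie in $\PG(2,\fqs)$ rather than in $\PG(2,\F_{q^6})\setminus\PG(2,\fqs)$); both exclusions are also seen from the divisibility constraints of Theorem~\ref{caratteri}. Everything else reduces to the standard bookkeeping of conjugacy class sizes in $SL(2,\bar q)$ and a routine polynomial simplification.
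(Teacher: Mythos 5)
Your proposal is correct and follows essentially the same route as the paper: the same decomposition of $SL(2,\bar q)\setminus\{1\}$ into the central involution, unipotents, order-$2p$ elements, and split/non-split torus elements, the same type identification via Lemma \ref{pelem} and Theorem \ref{caratteri}, and the same parity dichotomy for the non-split torus (which the paper phrases as $\bar q+1\mid q+1$ versus $\bar q+1\mid q-1$, equivalent to your field-containment criterion $\F_{\bar q^2}\subseteq\F_q$). Your collected sum agrees with the paper's Riemann--Hurwitz computation in both cases.
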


\begin{proof} 
Suppose that $q=\bar{q}$. Looking at the structure of the conjugacy classes of $SL(2,q)$ (see for instance \cite[\S 3.6]{Suzuki}) we get the following classification for the elements of $SL(2,q)$:
\begin{itemize}
\item $1$ element $\alpha$ of order $2$. Thus, $i(\alpha)=q+1$, from Theorem \ref{caratteri};
\item $\frac{q(q-1)^2}{2}$ nontrivial elements whose order is different from $2$ but divides $q+1$. From Lemma \ref{pelem} the contribution to the degree of the different divisors of these elements is equal to $0$;
\item $\frac{q(q+1)(q-3)}{2}$ nontrivial elements whose order is different from $2$ but divides $q-1$. From Theorem \ref{caratteri} the contribution to the degree of the different divisors of these elements is equal to $2$;
\item $(q-1)(q+1)$ elements of order $p$. From Lemma \ref{pelem} the contribution of these elements is equal to $q+2$;
\item $q^2-1$ elements of order a multiple of $p$ different from $p$. From Theorem \ref{caratteri} their contribution to the different divisor is equal to $1$.  
\end{itemize}

Suppose that $q=\bar{q}^{2r+1}$, so that $\bar{q}+1 \mid q+1$ and $\bar{q}-1 \mid q-1$. From the Riemann-Hurwitz formula and Theorem \ref{caratteri} we get that
$$(q+1)(q-2)=$$
$$\bar{q}(\bar{q}+1)(\bar{q}-1)(2\bar{g}-2) +(q+1)+2\frac{\bar{q}(\bar{q}+1)(\bar{q}-3)}{2}+0 \frac{\bar{q}(\bar{q}-1)^2}{2}+(\bar{q}-1)(\bar{q}+1)(q+2)+(\bar{q}^2-1),$$
and the claim follows by direct computation.

Suppose that $q=\bar{q}^{2r}$, so that $\bar{q}-1,\bar{q}+1 \mid q-1$. From the Riemann-Hurwitz formula and Theorem \ref{caratteri} we get that
$$(q+1)(q-2)=$$
$$\bar{q}(\bar{q}+1)(\bar{q}-1)(2\bar{g}-2) +(q+1)+2 \frac{\bar{q}(\bar{q}+1)(\bar{q}-3)}{2} +2 \frac{\bar{q}(\bar{q}-1)^2}{2}+(\bar{q}-1)(\bar{q}+1)(q+2)+(\bar{q}^2-1).$$
\end{proof}

\begin{proposition} Let $G \cong T L (2,p^k)= \langle SL(2,p^k), d_\pi \rangle$ where,
$$d_\pi = \begin{pmatrix}
    w & 0 & 0  \\
    0 & w^{-1} & 0 \\
    0 & 0 & 1
  \end{pmatrix},$$
for $w=\xi^{\frac{p^k+1}{2}}$ and $\mathbb{F}^*_{p^{2k}}= \langle \xi \rangle$.
Then the genus $\bar{g}$ of the quotient curve $\cH_q / G$ is given by:
$$ \bar{g} =\frac{(q-\bar q^2)(q-1)}{4\bar{q}(\bar{q}^2-1)}. $$
\end{proposition}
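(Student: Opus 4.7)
The plan is to apply the Riemann-Hurwitz formula to the Galois cover $\cH_q\to\cH_q/G$, where $|G|=2\bar q(\bar q^2-1)$, and to compute the degree of the different $\Delta_G=\sum_{\sigma\in G\setminus\{1\}}i(\sigma)$ by splitting $G$ as the disjoint union of its index-two normal subgroup $SL(2,\bar q)$ and the coset $SL(2,\bar q)\,d_\pi$. For $\sigma\in SL(2,\bar q)\setminus\{1\}$ the contributions have already been tabulated in the proof of Proposition~\ref{sottosl}. Using the description of $\cMq$ in Section~\ref{secmassimale}, one checks that $d_\pi\in H\cong SL(2,q)$ forces $w\in\mathbb{F}_q$, i.e.\ the order $2(\bar q-1)$ of $w$ must divide $q-1$; this happens precisely when $q=\bar q^{2r}$ for some $r\ge 1$. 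Hence we are in the second case of Proposition~\ref{sottosl}, and moreover $(q-1)/2$ is a multiple of $\bar q-1$, so every element of $\mathbb{F}_{\bar q}^{*}$ is automatically a square in $\mathbb{F}_q^{*}$.

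The heart of the proof is to show that every element of the coset $SL(2,\bar q)\,d_\pi$ has type {\rm (B2)} in the sense of Lemma~\ref{classificazione}. Write $\sigma=A\,d_\pi$ with $A=\bigl(\begin{smallmatrix}a&b\\c&d\end{smallmatrix}\bigr)\in SL(2,\bar q)$; the $2\times 2$ block of $\sigma$ has determinant $1$ and trace $t=aw+dw^{-1}$. Since $w^2\in\mathbb{F}_{\bar q}^{*}$ one has $w\mathbb{F}_{\bar q}=w^{-1}\mathbb{F}_{\bar q}$, hence $t\in w\mathbb{F}_{\bar q}$; because the order $2(\bar q-1)$ of $w$ does not divide $\bar q-1$, we have $w\notin\mathbb{F}_{\bar q}$, hence $\pm 2\notin w\mathbb{F}_{\bar q}$ and $t\neq\pm 2$. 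Consequently $t^2-4\in\mathbb{F}_{\bar q}^{*}$ is nonzero, and by the square-in-$\mathbb{F}_q$ observation above it is a nonzero square in $\mathbb{F}_q^{*}$. The eigenvalues of the block are therefore two distinct elements $\lambda,\lambda^{-1}\in\mathbb{F}_q^{*}\setminus\{\pm 1\}$, whose eigenvectors give two distinct fixed points of $\sigma$ on $\ell\cap\cH_q=\mathbb{P}^1(\mathbb{F}_q)$. Together with the fixed point $P_0\notin\cH_q$ this matches case {\rm (B2)} of Lemma~\ref{classificazione}---the order of $\sigma$ equals that of $\lambda$, divides $q-1$, and is neither $1$ nor $2$, so it divides $q^2-1$ but not $q+1$---and Theorem~\ref{caratteri}(5) yields $i(\sigma)=2$. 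Thus the coset contributes $2|SL(2,\bar q)|=2\bar q(\bar q^2-1)$ to $\Delta_G$.

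Substituting both contributions into the Riemann-Hurwitz identity $(q+1)(q-2)=|G|(2\bar g-2)+\Delta_G$ and simplifying yields the announced formula. The main obstacle is the eigenvalue analysis in the coset: one must rule out simultaneously unipotent-like or $(-1)$-unipotent-like elements (which would force $t=\pm 2$) and type-{\rm (B1)} elements (which would make $t^2-4$ a non-square in $\mathbb{F}_q$, giving $i(\sigma)=0$ and an incorrect total). Both are eliminated by the combination of $w^2\in\mathbb{F}_{\bar q}^{*}$, $w\notin\mathbb{F}_{\bar q}$, and the assumption $q=\bar q^{2r}$ forced by $d_\pi\in\cMq$; everything else is routine bookkeeping.
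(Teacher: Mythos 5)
Your proof is correct and yields the stated genus. Its global strategy coincides with the paper's: observe that $d_\pi\in\cMq$ forces $q=\bar q^{2r}$, split $G$ into $SL(2,\bar q)$ and the nontrivial coset $SL(2,\bar q)\,d_\pi$, import the contributions of $SL(2,\bar q)\setminus\{1\}$ from Proposition \ref{sottosl}, show that every element of the coset is of type (B2) (hence contributes $2$ to the different by Theorem \ref{caratteri}), and finish with Riemann--Hurwitz; the arithmetic at the end checks out. Where you genuinely diverge is in the proof of the key claim about the coset. The paper argues by orders: it excludes $o(\alpha d_\pi)\in\{2,4\}$ and then notes that $(\alpha d_\pi)^{2(\bar q-1)}$ lands in $SL(2,\bar q)$ and is trivial or of type (B2). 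You instead compute the trace $t=aw+dw^{-1}\in w\,\mathbb F_{\bar q}$ of the $2\times2$ block, use $w^2\in\mathbb F_{\bar q}^{*}$ and $w\notin\mathbb F_{\bar q}$ to get $t\ne\pm2$ and $t^2-4\in\mathbb F_{\bar q}^{*}$, and use $2(\bar q-1)\mid q-1$ to see that $t^2-4$ is a square in $\mathbb F_q$, so the eigenvalues are distinct elements $\lambda,\lambda^{-1}\in\mathbb F_q^{*}\setminus\{\pm1\}$ and Theorem \ref{caratteri}(5) applies directly. Your version is more explicit and in fact more robust: the paper's exclusion of order $4$ is not actually valid (for $A=\bigl(\begin{smallmatrix}0&1\\-1&0\end{smallmatrix}\bigr)$ the element $A d_\pi$ has trace $0$ and squares to the central involution, so it does have order $4$), but this does not damage the paper's final count because $q\equiv1\pmod 4$ forces such order-$4$ elements to be of type (B2) anyway --- which is exactly what your eigenvalue computation establishes uniformly for the whole coset without any case distinction.
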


\begin{proof} Let $\bar{q}=p^k$.
Since $d_\pi\in SL(2,\bar q^2)\setminus SL(2,\bar q)$, $q$ is an even power $\bar q^{2r}$ of $\bar q$.
The values $i(\sigma)$ for $\sigma \in SL(2,\bar{q})$ are computed in Proposition \ref{sottosl}.
Since $o(d_\pi)=2(\bar{q}-1)$, we have that $o(d_\pi) \mid (\bar{q}^2-1)$ and $o(d_\pi) \nmid (\bar{q}+1)$. Thus, according to Theorem \ref{caratteri} we have that $i(d_\pi)=2$.
We need to compute $i(\alpha {d_\pi}^s)$ for every $\alpha \in SL(2,\bar{q})$ and $s \in \mathbb{N}$.
Note that $q$ is an even power $\bar q^{2r}$ of $\bar q$, otherwise $d_{\pi}\notin H$; hence $\bar q^2-1$ divides $q-1$. We show that any element $\alpha d_\pi$ with $\alpha \in SL(2,\bar q)$ is of type (B2).
Suppose $o(\alpha d_\pi)=2$; then $\alpha d_\pi$ is the central involution $\eta$ of $SL(2,\bar q)$, a contradiction to $d_\pi\notin SL(2,\bar q)$.
Suppose $o(\alpha d_\pi)=4$. Then $(\alpha d_\pi)^2=\gamma d_\pi^2$ ($\gamma \in SL(2,\bar q)$) is equal to $\eta$ and hence acts trivially on the line $\ell$; this implies that $\gamma$ is the unique element $(d_\pi^2)^{-1}\in SL(2,\bar q)$ acting as the inverse of $d_\pi^2$ on $\ell$, a contradiction.
Hence $o(\alpha d_\pi)>4$. Since $(\alpha d_\pi)^{2(\bar q-1)}\in SL(2,\bar q)$ is either trivial or of type (B2), the claim follows.
From the Riemann-Hurwitz formula
$$(q+1)(q-2)=2 \bar q(\bar q+1)(\bar q-1)(2 \bar g-2) + (q+1)+2 \frac{\bar{q}(\bar{q}+1)(\bar{q}-3)}{2}$$
$$+2 \frac{\bar{q}(\bar{q}-1)^2}{2}+(\bar{q}-1)(\bar{q}+1)(q+2)+(\bar{q}^2-1) +\bar q(\bar q+1)(\bar q-1) \cdot 2$$
\end{proof}

\subsection{Some tame subgroups of $\cMq$} \ \\ \\
We refer to the complete classification of subgroups of $SL(2,q)$ given in Theorem \ref{sl2q}. In this Section the genus of every quotient curve $\cH_q / G$, for $G=N \rtimes C$ with $N<H$ and $C \cong C_{m}$, $m \mid (q+1)$, is computed.

\begin{proposition} \label{ciccic1} 
Let $G=C_d \rtimes C_m=\langle \beta \rangle \rtimes \langle \alpha \rangle$ where $C_d < H$, $d \mid (q-1)$, and $m \mid (q+1)$ of $\cMq$ then the genus $\bar g$ of the quotient curve $\cH_q / G$ is given by one of the following values:
$$\bar g = \begin{cases} (q-1)^2/(4d), & if \ d \ is \ odd, \ m=2 \ and \ \alpha \beta=\beta \alpha, \\
(q-1)(q-d)/(4d), &   if \ d \ is \ odd, \ m=2 \ and \ \alpha \beta \ne \beta \alpha, \\
(q-1)(q-d-1)/(4d), & if \ d \ is \ even \ and \ m=2,\\
(q-1)(q-m+1)/(2md), & if \ d \ is \ odd, \ m>2 \ and \ \alpha\beta=\beta\alpha, \\
(2(q^2-1)-m(q-1-2d))/(4md), & if \ d \ is \ odd, \ m>2, \ m\equiv_4 0, \ and \ \alpha\beta\ne\beta\alpha, \\
(2(q^2-1-d(q+1))-m(q-1+2d))/(4md), & if \ d \ is \ odd, \ m>2, \ m\equiv_4 2, \ and \ \alpha\beta\ne\beta\alpha, \\
(q-1)(q+1-2m)/(2md), & \ if \ d \ is \ even, \ m>2 \ is \ odd \ and \ \beta\alpha=\alpha\beta,\\
(q-1-d)(q+1-m)/(2md), & \ if \ d \ is \ even, \ m\equiv_4 2 \ and \ \beta\alpha\ne\alpha\beta.
 \end{cases}$$  
\end{proposition}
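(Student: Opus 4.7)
The plan is to apply the Riemann--Hurwitz formula to the degree-$dm$ Galois cover $\cH_q \to \cH_q/G$. Since $g(\cH_q) = q(q-1)/2$, this reads
$$q^2 - q - 2 \;=\; dm(2\bar g - 2) + \Sigma, \qquad \Sigma \,=\, \sum_{\sigma \in G \setminus \{1\}} i(\sigma),$$
so the whole task is to compute $\Sigma$ by classifying each nontrivial $\sigma\in G$ via Lemma \ref{classificazione} and reading $i(\sigma)$ from Theorem \ref{caratteri}. I would split the nontrivial elements of $G=\langle\beta\rangle\rtimes\langle\alpha\rangle$ into pure powers of $\beta$, pure powers of $\alpha$, and mixed products $\beta^i\alpha^j$ with $i,j\neq 0$, and treat each part separately.

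For pure powers of $\beta$, since $o(\beta^i)\mid d\mid (q-1)$ and $\gcd(q-1,q+1)=2$ (as $q$ is odd), every $\beta^i$ of order $>2$ fails to divide $q+1$ and is therefore of type (B2) with $i(\beta^i)=2$; the unique involution in $\langle\beta\rangle$ (present only when $d$ is even) is the central involution $\tau$ of $H\cong SL(2,q)$, hence a $(P_0,\ell)$-homology of type (A) with $i(\tau)=q+1$. For pure powers of $\alpha$, every nontrivial $\alpha^j$ lies in $Z(C_{\PGU(3,q)}(\tau'))$ and is a $(P'_0,\ell')$-homology of type (A), giving $i(\alpha^j)=q+1$.

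The real work is in the mixed products. In the abelian case $G\cong C_d\times C_m$, a direct computation gives $(\beta^i\alpha^j)^k=\beta^{ik}\alpha^{jk}$, whence $o(\beta^i\alpha^j)=\mathrm{lcm}(o(\beta^i),o(\alpha^j))$; since $\gcd(d,m)\mid 2$, this order generically divides $q^2-1$ but not $q+1$, forcing type (B2) and $i=2$, with any involution analysed via Theorem \ref{caratteri}(1) to fall in type (A). In the non-abelian case, normality of $\langle\beta\rangle$ together with the matrix form of Corollary \ref{explicit} forces $\alpha$ to act on $\langle\beta\rangle$ by inversion, so for odd $j$ one computes $(\beta^i\alpha^j)^2=\alpha^{2j}$: the element $\beta^i\alpha^j$ thus has order $2\cdot o(\alpha^{2j})$ and squares into the homology group $\langle\alpha^2\rangle$. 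Its type is then dictated by whether $\alpha^{2j}$ is trivial (giving an involution of type (A)), an involution (giving an element of order $4$ and type (B2) or (E)), or of larger order in the homology group.

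The main obstacle is the bookkeeping in the non-abelian case, where the dichotomy $m\equiv 0\pmod 4$ versus $m\equiv 2\pmod 4$ in the statement arises precisely because $\alpha^{m/2}$ may or may not occur as the square of a mixed element, shifting the count of type-(A) elements by $d$ and hence the value of $\Sigma$ by $d(q+1)-2d = d(q-1)$. I would therefore tabulate, separately in each of the eight sub-cases, the numbers $N_{(A)}$, $N_{(B2)}$, $N_{(E)}$ of elements of each type, then substitute
$$\Sigma \,=\, (q+1)\,N_{(A)} + 2\,N_{(B2)} + N_{(E)}$$
into the Riemann--Hurwitz formula and solve for $\bar g$; a sanity check using the verified identity $1+N_{(A)}+N_{(B2)}+N_{(E)}=dm$ keeps the arithmetic honest and yields exactly the eight expressions in the statement.
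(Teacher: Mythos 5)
Your overall strategy (Riemann--Hurwitz plus element-by-element classification via Lemma \ref{classificazione} and Theorem \ref{caratteri}) is the same as the paper's, but there is a genuine error in your classification of the elements of $C_m$ and of the mixed products, and it is precisely the error that would destroy the sub-cases responsible for several rows of the stated formula. You assert that every nontrivial $\alpha^j$ is a $(P_0',\ell')$-homology of type (A) with $i(\alpha^j)=q+1$. This cannot hold in the non-commuting cases with $m>2$: there $\alpha$ normalizes $\langle\beta\rangle$ without centralizing it, hence interchanges the two fixed points $P,Q\in\cH_q$ of $\beta$, i.e.\ has an orbit of length $2$; a homology of order $>2$ acts semiregularly outside its center and axis and so cannot do this. (Equivalently: if $\alpha$ of order $m>2$ were of type (A) and normalized $\langle\beta\rangle$, it would have to fix $P$, $Q$, $R$ and hence commute with $\beta$.) In the paper's treatment only $\langle\alpha^2\rangle$ consists of homologies, while $\alpha$ and the other $m/2$ elements of $C_m\setminus\langle\alpha^2\rangle$ are of type (B1) with $i=0$. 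Your count therefore overestimates $\Sigma$ by $(m/2)(q+1)$ in those cases, and the resulting $\bar g$ will not match rows 5, 6 and 8 of the statement. Note also that Proposition \ref{ciccic2} explicitly adds the hypothesis ``$\alpha$ of type (A)'' precisely because it is \emph{not} automatic here.

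The same problem propagates to your mixed products in the non-abelian case. From $(\beta^i\alpha^j)^2=\alpha^{2j}$ you conclude the element is ``of type (B2) or (E)''; but its order divides $q+1$ (the paper's matrix computation gives $o(\beta'\alpha')=o(\alpha')\mid(q+1)$), so by Lemma \ref{classificazione} it is of type (A) or (B1) with $i\in\{q+1,0\}$, never (B2) with $i=2$ --- and type (E) is impossible throughout, since $|G|=dm$ is coprime to $p$, so the term $N_{(E)}$ should not appear in $\Sigma$ at all. The correct source of the $m\equiv_4 0$ versus $m\equiv_4 2$ dichotomy is whether the involution $\alpha^{m/2}$ lies inside or outside the homology subgroup $\langle\alpha^2\rangle$: only in the latter case do the $d$ products $\beta^i\alpha^{m/2}$ become involutions of type (A), contributing an extra $d(q+1)$ to $\Sigma$. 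Your sanity check $1+N_{(A)}+N_{(B2)}+N_{(E)}=dm$ would not catch any of this, since it is insensitive to which type each element is assigned. To repair the argument you need the geometric step the paper makes explicit: decide, for each element, whether its order divides $q+1$ (types (A)/(B1), distinguished by whether it is a homology) or divides $q^2-1$ but not $q+1$ (type (B2)), using the fixed-point configuration $\{P,Q,R\}$ of $\beta$ and the action of $\alpha$ on it.
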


\begin{proof}
\textbf{Case 1. d  is odd and $ \bf m=2$}
\
\\ 
From Lemma \ref{classificazione} and Theorem \ref{caratteri}, every $\sigma \in C_d$ is of type (B2) and hence $i(\sigma)=2$. Let $P, Q$ and $R$ be the fixed points of $\beta$, with $P,Q \in \cH_q(\mathbb{F}_{q^2})$ and $R \not\in \cH_q$. Assume that $\alpha$ and $\beta$ commute. Since $d$ and $p$ are odd, $G=C_{2d}$ and $2d \mid (q-1)$. Every element $\gamma=\sigma \alpha$ with $\sigma \in C_d$ is of type (B2) as $\gamma$ has exactly the same fixed points of $\sigma$. Proposition \ref{caratteri} and the Riemann-Hurwitz formula yield
$$q^2-q-2=2d(2 \bar g -2) + (q+1) + 2(d-1) +2(d-1),$$
and the claim follows. Assume that $\alpha$ and $\beta$ do not commute. Thus, $\alpha(P)=Q$ and $\alpha(R)=R$. Let $\sigma \in C_d$ with $o(\sigma)=d^\prime \mid d$. Up to conjugation we can assume $P=(0:0:1)$ and $Q=(1:0:0)$ where $\cH_q$ has equation \eqref{M2}. Equivalently up to conjugation, $\sigma$ is given by the following matrix representation,
$$\sigma=\begin{pmatrix} a^{q+1} & 0 & 0 \\ 0 & a & 0 \\ 0 & 0 & 1 \end{pmatrix},$$
for some $a \in \mathbb{F}_{q^2}$ with $o(a)=d^\prime$, and hence
$$\alpha=\begin{pmatrix} 0 & 0 & 1 \\ 0 & 1 & 0 \\ 1 & 0 & 0 \end{pmatrix}.$$
By direct computation $o(\sigma\alpha)=2\frac{o(\sigma)}{(o(\sigma),q-1)}$ and since $d^\prime \mid (q-1)$ we have $o(\sigma \alpha)=2$ for every $\sigma \in C_d$. This proves that $G$ is a dihedral group of order $2d$, and Theorem \ref{caratteri} and the Riemann-Hurwitz formula yield
$$(q^2-q-2)=2d(2 \bar g -2)+2(d-1)+(q+1)+(d-1)(q+1),$$
and the claim follows.
\ \\ 
\textbf{Case 2. d  is even and $ \bf m=2$}
\ \\ 
From Lemma \ref{classificazione}, $C_d$ contains a unique element of order $2$ which is a $(R, \ell_R)$-homology. Thus, $\beta$ and $\alpha$ cannot commute since otherwise $C_d$ and $C_m$ are not disjoint.  Arguing as in Case 1, $G$ is a dihedral group of order $2d$ and hence from the Riemann-Hurwitz formula
$$(q^2-q-2)=2d(2 \bar g-2)+(q+1)+(q+1)+2(d-2)+(d-1)(q+1).$$
\textbf{Case 3. d  is odd and $ \bf m>2$}
\ \\ 
Assume that $\alpha$ and $\beta$ commute; equivalently, assume that $\alpha$ is a $(R,\ell_R)$-homology. From Theorem \ref{classificazione} and the Riemann-Hurwitz formula
$$(q^2-q-2)=md(2 \bar g -2)+(d-1)2+(m-1)(q+1)+(m-1)(d-1)2.$$
Thus, assume that $\beta$ and $\alpha$ do not commute. Hence $\alpha(P)=Q$ and $\alpha(R)=R$. Since $C_m$ has an orbit of length $2$, $m$ must be even and $\alpha^2$ is a $(R,\ell_R)$-homology of order $m/2$. We note that in this case $\alpha$ and the other elements of $C_m$ which lie outside $\langle \alpha^2 \rangle$ must be of type (B1) since $m>2$ and a homology acts semiregularly outside its center and axis. Let $\alpha^\prime \in C_m \setminus \langle \alpha^2 \rangle$ and $\beta^\prime \in C_d$.  As before we can assume up to conjugation that $\beta^\prime$ is given by the following matrix representation,
$$\beta^\prime=\begin{pmatrix} a^{q+1} & 0 & 0 \\ 0 & a & 0 \\ 0 & 0 & 1 \end{pmatrix},$$
for some $a \in \mathbb{F}_{q^2}$ with $o(a)=o(\beta^\prime)$, and hence
$$\alpha^\prime=\begin{pmatrix} 0 & 0 & A \\ 0 & 1 & 0 \\ B & 0 & 0 \end{pmatrix},$$
for some $A,B \in \mathbb{F}_{q^2}^*$. Thus,
$$(\beta^\prime \alpha^\prime)^2 =\begin{pmatrix} AB & 0 & 0 \\ 0 & 1 & 0 \\ 0 & 0 & AB \end{pmatrix}.$$
This prove that $o(\beta^\prime \alpha^\prime)=2o(AB)=o(\alpha^\prime) \mid (q+1)$ and thus $\beta^\prime \alpha^\prime$ is either of type (B1) or of type (A). In particular from the matrix representation of $\beta^\prime \alpha^\prime$ we get that $\beta^\prime \alpha^\prime$ is of type (A) if and only if $o(\beta^\prime \alpha^\prime)=2$ and so if and only if $o(\alpha^\prime)=2$. Now the Riemann-Hurwitz formula yields
$$(q^2-q-2)=md(2g-2)+(d-1)2+(m/2-1)(q+1) +\delta,$$
where $\delta$ satisfies
$$\delta= \begin{cases} (m/2-1)(d-1)2+(m/2-1)(d-1)0+(q+1)d, & if \ m/2 \ is \ odd, \\ (m/2-1)(d-1)2+(m/2)(d-1)0, \ otherwise.\end{cases}$$
\ \\ 
\textbf{Case 4. d  is even and $ \bf m>2$}
\ \\ 
In this case $C_d$ has exactly one element of order $2$ which is of type (A), and $d-2$ nontrivial elements of type (B2). Assume that $\alpha$ and $\beta$ commute. Since $C_d$ and $C_m$ are disjoint this case can happen if and only if $m$ is odd. Arguing as before, from the Riemann-Hurwitz formula 
$$(q^2-q-2)=md(2 \bar g-2)+(d-2)2+(q+1)+(m-1)(q+1)+(m-1)(d-2)2+(m-1)(q+1).$$
Arguing as in Case 3, $m/2$ must be odd and the Riemann-Hurwitz formula yields
$$ q^2-q-2=md(2\bar g-2)+(d-2)2+(q+1)+(m/2-1)(q+1)+(q+1)$$
$$+(m/2-1)(d-2)2+(m/2-1)(q+1)+(q+1)+(d-2)(q+1). $$
\end{proof}
\begin{remark}\label{remoforte}
From the proof of Proposition \ref{ciccic1} follows that the values given for $\bar g$ in Proposition \ref{ciccic1} are exactly all the possible values of $g(\cH_q/G)$ for $G\leq\cMq$, $G\cong C_d\rtimes C_m$, $d\mid(q-1)$, $m\mid(q+1)$.
In fact, a matrix representation for the generators of $G$ can be explicitly provided as follows, where $\cH_q$ has equation \eqref{M2}:
\begin{itemize}
\item if $\alpha\beta=\beta\alpha$, then $\beta=\diag[a^{q+1},a,1]$ and $\alpha=\diag[1,A,1]$;
\item if $\alpha\beta\ne\beta\alpha$, then $\beta=\diag[a^{q+1},a,1]$ and 
$$\alpha=\begin{pmatrix} 0 & 0 & A \\ 0 & 1 & 0 \\ B & 0 & 0 \end{pmatrix}.$$
\end{itemize}
\end{remark}

\begin{proposition} \label{ciccic2} 
Let $G=C_d \rtimes C_m=\langle \beta \rangle \rtimes \langle \alpha \rangle$ where $C_d < H$, $d \mid (q+1)$, $d \ne 2$, $m \mid (q+1)$ of $\cMq$ such that $\alpha$ is of type (A); then the genus $\bar g$ of the quotient curve $\cH_q / G$ is given by one of the following values:
$$\bar g = \begin{cases}
(q^2-2q-3+4d)/(4d), & if \ d \ is \ odd, \ m=2, \ and \ \alpha \beta=\beta \alpha, \\
((q+1)(q-2-d)+4d)/(4d), &   if \ d \ is \ odd, \ m=2, \ and \ \alpha \beta \ne \beta \alpha, \\
((q+1)(q-1-m-2e)+2md)/(2md), & if \ d \ is \ odd, \ m>2, \ and \ \alpha\beta=\beta\alpha, \\
((q+1)(q-5)+4d) / (4d), & if \ d \ is \ even, \ m=2 \ and \ \alpha\beta=\beta\alpha,\\
((q+1)(q-3-d)+4d)/(4d), & \ if d \ is \ even, m=2 \ and \ \alpha\beta \ne \beta\alpha, \\
((q+1)(q-2-m-2e)+2md)/(2md), & if \ d \ is \ even, \ m>2, \ and \ \alpha\beta=\beta\alpha.
\end{cases}$$
where $e=\sum_{d^\prime \mid (m,d)} \varphi(d^\prime)$ and $\varphi(d^\prime)$ denotes the Euler Totient function of $d^\prime$.  
\end{proposition}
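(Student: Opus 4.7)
The approach mirrors that of Proposition \ref{ciccic1}: apply Riemann--Hurwitz to the Galois cover $\cH_q\to\cH_q/G$ of degree $md$, namely
\begin{equation*}
q^2-q-2 \;=\; md\,(2\bar g-2)\;+\sum_{\sigma\in G\setminus\{1\}} i(\sigma),
\end{equation*}
classify each non-identity $\sigma\in G$ according to Lemma \ref{classificazione}, and read its contribution $i(\sigma)$ from Theorem \ref{caratteri}. Since $\beta\in H\cong SL(2,q)$ has order $d\mid(q+1)$ with $d\ne 2$, Lemma \ref{pelem} classifies every nontrivial power $\beta^i$ with $o(\beta^i)\ne 2$ as type (B1), contributing $0$; if $d$ is even the unique involution $\beta^{d/2}$ of $\la\beta\ra$ coincides with the central $(P_0,\ell)$-homology $\tau$, type (A) with contribution $q+1$. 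For $\la\alpha\ra$: every nontrivial power of the homology $\alpha$ is a homology with the same center and axis, hence of type (A), contributing $q+1$.

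The crux is the classification of the $(d-1)(m-1)$ mixed products $\beta^i\alpha^j$. When $\alpha$ and $\beta$ commute, commutativity forces $\beta$ to stabilize the pole-polar pair $(P'_0,\ell')$ of $\alpha$; comparing with the self-polar triangle fixed by the type-(B1) element $\beta$, we find that $P'_0$ must be a vertex of that triangle and $\ell'$ its opposite side. In a basis of eigenvectors for this triangle, $\alpha$ and $\beta$ are simultaneously diagonalizable in $\PGU(3,q)$, so every $\beta^i\alpha^j$ is again diagonal with order dividing $q+1$; by Lemma \ref{classificazione} such an element is necessarily of type (A) or (B1), according as two eigenvalues coincide (a homology) or all three are distinct. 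Counting which products become homologies reduces to counting elements of $\la\alpha\ra\cap\la\beta\ra$ grouped by order, which produces the factor $e=\sum_{d'\mid\gcd(m,d)}\varphi(d')$ of the statement. When $\alpha$ and $\beta$ do not commute (the statement forces $m=2$), the argument of Case 1 of Proposition \ref{ciccic1} applies: $\alpha$ interchanges two of the three vertices fixed by $\beta$, so $G$ is dihedral of order $2d$ and every coset element $\beta^i\alpha$ is an involution; an explicit matrix computation in the representation (\ref{map}) identifies each such involution as type (A), contributing $q+1$.

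Substituting these counts into Riemann--Hurwitz in each of the six cases of the statement and solving for $\bar g$ produces the six genus formulas. The main technical obstacle is the commuting case with $m>2$: one must separate the $(d-1)(m-1)$ mixed products into homologies (type (A), contribution $q+1$) and non-homology diagonal elements (type (B1), contribution $0$), and this bookkeeping is exactly what is encoded by the factor $e$ in the final expression. The parity of $d$ only affects whether the involution $\beta^{d/2}$ contributes its extra $q+1$, which accounts for the shift between the $d$-odd and $d$-even cases.
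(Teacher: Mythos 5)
Your outline follows the paper's proof essentially step for step: classify the elements of $C_d$ via Lemma \ref{pelem} (type (B1) except the involution $\tau=\beta^{d/2}$ when $d$ is even), the elements of $C_m$ as type-(A) homologies, and the mixed products either by simultaneous diagonalization (commuting case) or via the dihedral structure (non-commuting case), then apply Riemann--Hurwitz. One small logical point: you write that in the non-commuting case ``the statement forces $m=2$,'' which is circular --- exhaustiveness of the six listed cases requires you to \emph{prove} that $\alpha\beta\ne\beta\alpha$ implies $m=2$. The paper does this: $\la\beta\ra$ is normal, so $\alpha$ permutes the vertices of $\beta$'s fixed triangle; it fixes the vertex $P_0$, and since a homology acts semiregularly off its center and axis, a nontrivial orbit on the remaining two vertices has length $o(\alpha)=m$, whence $m=2$. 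You state the interchange of two vertices but do not draw the conclusion from it.

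The more substantive problem is the homology count among the mixed products in the commuting case, which you summarize as ``counting elements of $\la\alpha\ra\cap\la\beta\ra$ grouped by order.'' Writing $\beta=\diag[\lambda,\lambda^{-1},1]$ and $\alpha=\diag[1,\gamma,1]$, the element $\beta^h\alpha^k$ (with $h\ne 0$) is a homology iff $\gamma^k=\lambda^h$ or $\gamma^k=\lambda^{2h}$. When $d$ is odd the map $h\mapsto\lambda^{2h}$ is a bijection and each admissible $\gamma^k$ yields exactly two values of $h$, giving the factor $2e$; but when $d$ is even the congruence $2h\equiv c\pmod d$ has zero or two solutions, so the count is not uniformly $2\varphi(d')$ per divisor $d'$ of $\gcd(d,m)$. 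Concretely, take $q=7$, $d=4$, $m=2$, $\alpha\beta=\beta\alpha$: then $G\cong C_4\times C_2$ contains five homologies, namely $\tau$, $\alpha$, $\tau\alpha$ and the two order-$4$ homologies $\beta^{\pm1}\alpha=\diag[\pm i,\pm i,1]$ with center $P_0$, so $\Delta=5(q+1)=40$ and $\bar g=1$, not the value $((q+1)(q-5)+4d)/(4d)=2$ claimed for this case --- and indeed Table \ref{tabella5} records genus $1$ for this order-$8$ group. The paper's own proof of Case 3 contains the same oversight, but since your proposal asserts that the counts reproduce the stated formulas, this is exactly the step at which your argument, carried out honestly, would fail for $4\mid d$.
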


\begin{proof}
\textbf{Case 1. d is odd and $\bf m=2$} \ \\
Since $C_d < H$ and $d$ is odd, every $\sigma \in C_d$ is of type (B1) from Lemma \ref{pelem}. Let $\{P,Q,R\}$ be the fixed points of $\beta$. Assume that $\alpha$ and $\beta$ commute. This is equivalent to require that $\alpha$ is a homology of center $T$ where $T \in \{P,Q,R\}$. Since $d$ and $m$ are coprime, every element of type $\beta^\prime \alpha$ is of type (B1) because it fixed $P,Q$ and $R$ but it cannot fixed other points. From the Riemann-Hurwitz formula and Theorem \ref{caratteri} we have
$$(q^2-q-2)=2d(2 \bar g-2) +(d-1)0+(q+1)+(d-1)0.$$
Assume that $\alpha$ and $\beta$ do not commute. In this case we can assume that $\alpha(P)=Q$ and $\alpha(R)=R$. Up to conjugation we can assume that $P=(1:0:0)$, $Q=(0:1:0)$, $R=(0:0:1)$, where $\cH_q$ has equation \eqref{M1}. Thus, if $\beta^\prime \in C_d$ then $\beta^\prime$ admits a matrix representation $[\lambda,\lambda^{-1},1]$
for some $\lambda \in \mathbb{F}_{q^2}$ with $o(\lambda)=o(\beta^\prime)$, and hence
$$\alpha=\begin{pmatrix} 0 & 1 & 0 \\ 1 & 0 & 0 \\ 0 & 0 & 1 \end{pmatrix}.$$
By direct checking $o(\beta^\prime \alpha)=2$ and $G$ is a dihedral group. Now, from the Riemann-Hurwitz formula
$$(q^2-q-2)=2d(2 \bar g -2)+(d-1)0+(q+1)+(d-1)(q+1).$$

\textbf{Case 2. d is odd and $\bf m>2$} \ \\
Assume that $\alpha$ and $\beta$ commute. This is equivalent to require that $\alpha$ is a homology of center $T$ where $T \in \{P,Q,R\}$. Let $D=(d,m)$. As before we can assume up to conjugation that $P=(1:0:0)$, $Q=(0:1:0)$, $R=(0:0:1)$ and $\cH_q$ has equation \eqref{M1}. Thus $\beta=[\lambda,\lambda^{-1},1]$ and $\alpha$ is either $\alpha=[\gamma,\gamma,1]$, or $\alpha=[1,\gamma,1]$ or $\alpha=[1,1,\gamma]$; say $\alpha=[1,\gamma,1]$.
By direct checking, $\beta^h\alpha^k$ is of type (A) if and only if $\gamma^k\in\{\lambda^h,\lambda^{2h}\}$, otherwise $\beta^h\alpha^k$ is of type (B1).
This proves that if $D \ne 1$ then for every $d^\prime \mid D$, $G$ contains $2 \varphi(d^\prime)$ elements of type (A) other than the elements of $C_m$; as usual, set $\varphi(1)=0$. From the Riemann-Hurwitz formula
$$(q^2-q-2)=dm(2 \bar g -2)+(d-1)0+(m-1)(q+1)+2 \sum_{d^\prime \mid D} \varphi(d^\prime)(q+1).$$
We note that in Case 2 $\alpha$ and $\beta$ must commute, since $\alpha$ acts semiregularly outside its center and axis while $m>2$.
\ \\
\textbf{Case 3. d is even and $\bf m=2$} \ \\
In this case $C_d$ contains exactly one element of type (A) which has order $2$, and the other $d-2$ elements are of type (B1). Assume that $\alpha$ and $\beta$ commute. Arguing with the matrix representations as in Case 2, $G$ contains just a homology other than $\beta^{o(\beta)/2}$ and $\alpha$ which is given by $\beta^{o(\beta)/2} \alpha$ and the other elements are of type (B1). By the Riemann-Hurwitz formula
$$(q^2-q-2)=2d(2 \bar g-2)+ (d-2)0+(q+1)+(q+1)+(q+1)+(d-2)0.$$
Assume that $\alpha$ and $\beta$ do not commute. Using the matrix representation of Case 1, one can check that also in this case $G$ is a dihedral group. Thus,
$$(q^2-q-2)=2d(2 \bar g -2)+(d-2)0+(q+1)+(q+1)+(d-1)(q+1).$$
\ \\
\textbf{Case 4. d is even and $\bf m>2$} \ \\
Since $m>2$ and $\alpha$ acts semiregularly outside its center and axis, $\alpha$ and $\beta$ commute.
Arguing as in Case 2, the Riemann-Hurwitz formula yields
$$(q^2-q-2)=dm(2 \bar g -2)+(q+1)+(d-2)0+(m-1)(q+1)+2 \sum_{d^\prime \mid D} \varphi(d^\prime)(q+1).$$
\end{proof}

\begin{remark}\label{remopiuforte}
As in Remark \ref{remoforte}, Proposition \ref{ciccic2} provides the genus of $\cH_q/G$ for any $G$ as in the hypothesis of Proposition \ref{ciccic2}; conversely, a group $G$ such that the genus of $\cH_q/G$ is any of the genera stated in Proposition \ref{ciccic2} exists.
\end{remark}

\begin{proposition}\label{propsl25}
Let $G=SL(2,5) \rtimes C_m\leq\cMq$ where $p \geq 7$, $q \equiv 1 \pmod5$, $\langle \alpha \rangle=C_m$ and $m \mid (q+1)$ such that $\alpha$ is of type (A); then the genus $\bar g$ of the quotient curve $\cH_q / G$ is given by one of the following values 
$$\bar g= \begin{cases} \frac{(q+1)(q-1-2m)+4m}{240m}, \ if \ \alpha \in Z(\cMq), \ q \equiv_{12} 1, \ and \ m \ is \ odd, \\
\frac{(q+1)(q-1-2m)+64m}{240m}, \ if \ \alpha \in Z(\cMq), \ q \equiv_{12} 7, \ and \ m \ is \ odd, \\
\frac{(q+1)(q-2m-20D+19)+84m}{240m}, \ if \ \alpha \in Z(\cMq), \ q \equiv_{12} 5, \ and \ m \ is \ odd,  \\
\frac{(q+1)(q-2m-20D+19)+144m}{240m}, \ if \ \alpha \in Z(\cMq), \ q \equiv_{12} 11, \ and \ m \ is \ odd,\end{cases}$$
where $D=(m,3)$.
\end{proposition}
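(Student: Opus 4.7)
The plan is to apply the Riemann--Hurwitz genus formula to the Galois cover $\mathcal{H}_q \to \mathcal{H}_q/G$. Since $\alpha \in Z(\mathcal{M}_q)$ commutes with $H\cong SL(2,q)$, the semidirect product $G = SL(2,5)\rtimes \langle\alpha\rangle$ is in fact a direct product, and every element of $G$ admits a unique expression $\sigma \alpha^k$ with $\sigma \in SL(2,5)$ and $0\le k<m$. Using the block form of $\mathcal{M}_q$ from Section \ref{secmassimale}, one writes $\alpha = \mathrm{diag}(a,a,1)$ with $a^{q+1}=1$ and $\sigma$ acting trivially on the last coordinate; hence $\sigma\alpha^k$ has eigenvalues $a^k\lambda,\ a^k\lambda^{-1},\ 1$, where $\lambda,\lambda^{-1}$ are the eigenvalues of $\sigma$ in $\overline{\mathbb{F}}_q$.

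Since $p\ge 7$ does not divide $|G|=120m$ and $\sigma\alpha^k$ fixes $P_0\in \mathbb{P}^2(\mathbb{F}_{q^2})\setminus\mathcal{H}_q$, Lemma \ref{classificazione} restricts its type to (A), (B1), or (B2). The element is a homology, i.e.\ of type (A), precisely when two of its eigenvalues coincide: either $\sigma\in\{1,\tau\}$, yielding the $2m-1$ $(P_0,\ell)$-homologies $\alpha^k$ ($k\ne 0$) and $\tau\alpha^k$ ($0\le k<m$); or $\sigma$ has order $n>2$ with eigenvalue $\lambda\in\langle a\rangle$, which forces $n\mid m$. Since $m$ is odd and the non-trivial orders in $SL(2,5)$ are $\{2,3,4,5,6,10\}$, the latter occurs only for $n=3$ and only when $D=\gcd(m,3)=3$; it then produces exactly $20\cdot 2=40$ extra type (A) elements, corresponding to the two values of $k$ with $a^k\in\{\lambda,\lambda^{-1}\}$ for each of the $20$ order-$3$ elements of $SL(2,5)$.

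For the remaining non-trivial elements with $o(\sigma)=n\in\{3,4,5,6,10\}$, commutativity and $\langle\sigma\rangle\cap\langle\alpha\rangle=\{1\}$ give $o(\sigma\alpha^k)=\mathrm{lcm}(n,m')$, where $m'=o(\alpha^k)$. If $n\mid q+1$ then this order divides $q+1$ and the element is of type (B1) with $i(\sigma\alpha^k)=0$ by Theorem \ref{caratteri}(4); if $n\mid q-1$ and $n\nmid q+1$, then $\gcd(n,m')\mid\gcd(q-1,q+1)=2$ and the oddness of $m$ forces $\gcd(n,m')=1$, so $o(\sigma\alpha^k)=nm'$ divides $q^2-1$ but not $q+1$, giving type (B2) with $i=2$ by Theorem \ref{caratteri}(5). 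Under the hypothesis $q\equiv 1\pmod 5$ one always has $5,10\mid q-1$, so the type distribution depends only on whether $3,4,6$ divide $q-1$ or $q+1$, which is decided by $q\pmod{12}$.

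With the enumeration complete, the Riemann--Hurwitz formula reads
\[
q^2-q-2 \;=\; 120m\,(2\bar g-2) \;+\; (q+1)\,|\{\text{type (A)}\}| \;+\; 2\,|\{\text{type (B2)}\}|,
\]
and each of the four stated subcases follows by direct substitution, using the character table of $SL(2,5)$ ($1$ involution, $20$ elements of order $3$, $30$ of order $4$, $24$ of order $5$, $20$ of order $6$, $24$ of order $10$). The main delicate point is the $D$-correction in the cases $q\equiv 5,11\pmod{12}$: when $3\mid q+1$ and $3\mid m$, the $40$ extra homologies from order-$3$ elements shift precisely $20(D-1)$ elements from type (B1) ($i=0$) to type (A) ($i=q+1$), producing the $-20D$ terms inside the factor $(q+1)(q-2m-20D+19)$ of the final genus formulas.
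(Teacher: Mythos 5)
Your computation of the four genus formulas is correct and follows essentially the same route as the paper: classify each $\sigma\alpha^k$ by its eigenvalues $a^k\lambda,a^k\lambda^{-1},1$, read off the type from Lemma \ref{classificazione} and Theorem \ref{caratteri} according to $q\bmod{12}$, count the $20(D-1)$ extra homologies coming from order-$3$ elements when $3\mid m\mid(q+1)$, and apply Riemann--Hurwitz. (One small elision: oddness of $m$ rules out extra homologies from $n\in\{4,6,10\}$ but not from $n=5$; for that you also need $5\mid(q-1)\Rightarrow 5\nmid(q+1)$, which you only invoke later.)

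The genuine gap is that you assume $\alpha\in Z(\cMq)$ from your first sentence, whereas the proposition is stated for an arbitrary $G=SL(2,5)\rtimes C_m\leq\cMq$ with $\alpha$ of type (A), and its conclusion lists \emph{only} genera labelled ``$\alpha\in Z(\cMq)$''. To prove the statement as written you must also show that the case $\alpha\notin Z(\cMq)$ contributes nothing. This is roughly half of the paper's proof: using \cite[Proposition 1.2]{WALL} one shows that $m$ odd forces $G$ to be a direct product (hence $\alpha\in Z(\cMq)$), so $m$ must be even; a homology argument on the fixed triangles of the order-$4$ subgroups then pins $m$ down to $2$ (a power of $2$), and finally the four groups of order $240$ containing $SL(2,5)$ (the relevant $SmallGroup(240,i)$, $i\in\{89,90,93,94\}$) are excluded one by one, either by structural contradictions (number of involutions, an order-$3$ element forced to have an orbit of length $2$ on the fixed points of a normal $C_4$, $G\cong SL(2,5)\times C_2$ contradicting $\alpha\notin Z(\cMq)$) or because Riemann--Hurwitz would give a non-integral genus. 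Without this elimination step your argument only establishes the formulas under the additional hypothesis $\alpha\in Z(\cMq)$, not the proposition itself.
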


\begin{proof}
Table \ref{tabellasl25}, as a direct application of Theorem \ref{classificazione} summarizes the values of $i(\sigma)$, for $\sigma \in SL(2,5)$ according to the congruence of $q$ modulo $12$.

\begin{center}
\begin{table}
\begin{small}
\caption{Values of $i(\sigma)$ for $\sigma \in SL(2,5)$}\label{tabellasl25}
\begin{tabular}{|c|c||c|c|c|c|}
\hline Order of $\sigma$ & number of elements in $SL(2,5)$ & $q \equiv_{12} 1$ & $q \equiv_{12} 7$ & $q \equiv_{12} 5$ & $q \equiv_{12} 11$ \\
\hline 2 & 1 & q+1 & q+1 & q+1 & q+1 \\
\hline 3 & 20 & 2 & 2 & 0 & 0 \\
\hline 4 & 30 & 2 & 0 & 2 & 0 \\
\hline 5 & 24 & 2 & 2 & 2 & 2 \\
\hline 6 & 20 & 2 & 2 & 0 & 0 \\
\hline 10 & 24 & 2 & 2 & 2 & 2 \\
\hline  & & $i(\sigma)$ & $i(\sigma)$ & $i(\sigma)$ & $i(\sigma)$ \\
\hline
\end{tabular}
\end{small}
\end{table}
\end{center}

Assume that $\alpha$ and $SL(2,5)$ commute. Thus, $\alpha \in Z(\cMq)$. Since $|SL(2,5)|$ is even, then $m$ must be odd since otherwise $SL(2,5)$ and $C_m$ cannot be disjoint. Assume that $q \equiv_{12} 1$. From the Riemann-Hurwitz formula
$$q^2-q-2=120m(2 \bar g -2)+(q+1)+(m-1)(q+1) +(20 \cdot 2 + 24 \cdot 2 + 30 \cdot 2 + 20 \cdot 2 + 24 \cdot 2)$$
$$+(m-1)(q+1)+(m-1)(20 \cdot 2 + 24 \cdot 2 + 30 \cdot 2 + 20 \cdot 2 + 24 \cdot 2),$$
now the claim follows by direct computation. 
Assume that $q \equiv_{12} 7$. From the Riemann-Hurwitz formula
$$q^2-q-2=120m(2 \bar g -2)+(q+1)+(m-1)(q+1) +(20 \cdot 2 + 24 \cdot 2 + 30 \cdot 0 + 20 \cdot 2 + 24 \cdot 2)$$
$$+(m-1)(q+1)+(m-1)(20 \cdot 2 + 24 \cdot 2 + 30 \cdot 0 + 20 \cdot 2 + 24 \cdot 2).$$
Assume that $q \equiv_{12} 5$. Since $m$ is odd and we can get a homology of the form $\beta \alpha$, for $\beta \in SL(2,5)$ if and only if $o(\beta)=o(\alpha)$, we can construct homologies $\beta \alpha \in G$ if and only if $o(\alpha)=o(\beta)=3$. Denote $D=(m,3)$. From the Riemann-Hurwitz formula
$$q^2-q-2=120m(2 \bar g -2)+(q+1)+(m-1)(q+1) +(20 \cdot 0 + 24 \cdot 2 + 30 \cdot 2 + 20 \cdot 0 + 24 \cdot 2)$$
$$+(m-1)(q+1)+(m-1)(30 \cdot 2 + 24 \cdot 2 + 24 \cdot 2) + 20(D-1)(q+1).$$
Assume that $q \equiv_{12} 11$. Arguing as in the previous case, from the Riemann-Hurwitz formula
$$q^2-q-2=120m(2 \bar g -2)+(q+1)+(m-1)(q+1) +(20 \cdot 0 + 24 \cdot 2 + 30 \cdot 0 + 20 \cdot 0 + 24 \cdot 2)$$
$$+(m-1)(q+1)+(m-1)(24 \cdot 2 + 24 \cdot 2) + 20(D-1)(q+1).$$
We now assume that $\alpha \not\in Z(\cMq)$. Let $q \equiv_{12} 1$. From \cite[Proposition 1.2]{WALL}, if $m$ is odd then $G$ is a direct product of $SL(2,5)$ and $C_m$ and hence $\alpha \in Z(\cMq)$, a contradiction. We assume that $m$ is even. Since $2 \mid (q+1)$ but $4 \nmid (q+1)$, we can write $m=2\tilde m$, where $\tilde m$ is odd. If $\tilde m >1$ then $C_m = C_{\tilde m} \times C_2$ is a cyclic group generated by a homology $\alpha$, which has two proper subgroups $C_{\tilde m}$ and $C_2$ of homologies with different axes; a contradiction. Hence $m=2$ and $G=SL(2,5) \rtimes C_2$. By direct checking, with MAGMA, there are just 4 group of order 240 containing $SL(2,5)$, namely $G \cong SmallGroup(240,i)$ with $i \in \{89,90,93,94\}$. 
\begin{itemize}
\item $i=89$: cannot occur, since in this case $G$ must contain a unique involution; a contradiction.
\item $i=93$: then $G$ contains a cyclic normal subgroup $C_4=\langle \beta \rangle$ of order $4$. From Theorem \ref{classificazione}, $\beta$ is of type (B2) fixing three $\mathbb{F}_{q^2}$-rational points $P,Q,R$ such that $P,Q \in \cH_q(\mathbb{F}_{q^2})$ and $R \not\in \cH_q$. Every $\sigma \in G$ normalizes $\beta$ and hence either fixes $P$ and $Q$ or $\sigma(P)=Q$. Since every element of order $3$ must commute with $\beta$, because it cannot have an orbit of length $2$, we have a contradiction.
\item $i=94$: then $G=SL(2,5) \times C_2$; a contradiction. 
\item $i=90$: from the Riemann-Hurwitz formula,
$$q^2-q-2=480(2 \bar g -2)+21(q+1)+2(218);$$
since by direct computation $\bar g \not\in \mathbb{Z}$, this case is impossible.
\end{itemize}
Assume that $q \equiv_{12} 7$. Arguing as in the previous case $G=SL(2,5) \rtimes C_{2^h}$ where $h \geq 1$ and $2^h \mid (q+1)$. Then $\alpha$ acts on a set $I$ of $\mathbb{F}_{q^2}$-rational triangles which are the fixed points of the subgroups of order $4$ of $SL(2,5)$, which have not common fixed points as they do not commute. Since $|I|=15$ is odd, $\alpha$ must have at least a fixed triangle $T=\{P,Q,R\}$. Since $\alpha \not\in Z(\cMq)$ then $\alpha(P)=Q$ and hence there is a $\alpha$-orbit of length $2$. This implies that $o(\alpha)=2$ and $G=SL(2,5) \rtimes C_2$. As before $G \cong SmallGroup(240,i)$ for $i \in \{89,90,93,94\}$; arguing as above, we get a contradiction in each case.
The proofs for $q \equiv_{12} 5$ and $q \equiv_{12} 11$ are similar to the previous ones.
\end{proof}

\begin{remark}\label{remoforteforte}
As in Remark \ref{remoforte}, Proposition \ref{propsl25} provides the genus of $\cH_q/G$ for any $G$ as in the hypothesis of Proposition \ref{propsl25}; conversely, a group $G$ such that the genus of $\cH_q/G$ is any of the genera stated in Proposition \ref{ciccic2} exists.
\end{remark}

\begin{proposition}\label{sl23omologia}
Let $G=SL(2,3) \rtimes C_m$  of $\cMq$ where $p \geq 5$, $\langle \alpha \rangle=C_m$ and $m \mid (q+1)$ such that $\alpha$ is of type (A); then the genus $\bar g$ of the quotient curve $\cH_q / G$ is given by one of the following values:
$$\bar g= \begin{cases}
\frac{(q+1)(q-1-2m)+4m}{48m}, \ if \ \alpha \in Z(\cMq), \ q \equiv_{12} 1, \ and \ m \ is \ odd,\\
\frac{(q+1)(q-1-2m)+16m}{48m}, \ if \ \alpha \in Z(\cMq), \ q \equiv_{12} 7, \ and \ m \ is \ odd, \\
\frac{(q+1)(q-2m-8D+7)+36m}{48m}, \ if \ \alpha \in Z(\cMq), \ q \equiv_{12} 5, \ and \ m \ is \ odd,  \\
\frac{(q+1)(q-2m-8D+7)+48m}{48m}, \ if \ \alpha \in Z(\cMq), \ q \equiv_{12} 11, \ and \ m \ is \ odd, \\
\frac{(q^2-q-2-13(q+1)-68)}{96}+1, \ if \ \alpha \not\in Z(\cMq), \ q \equiv_{12} 1, \ and \ m=2, \\ \frac{(q^2-q-2-13(q+1)-36)}{96}+1, \ if \ \alpha \not\in Z(\cMq), \ q \equiv_{12} 5, \ and \ m=2,\\ \frac{(q^2-q-2-13(q+1)-32)}{96}+1, \ if \ \alpha \not\in Z(\cMq), \ q \equiv_{24} 7,  \ m=2, \ and \ G \cong SmallGroup(48,29),
\\
\frac{(q^2-q-2-13(q+1))}{96}+1, \ if \ \alpha \not\in Z(\cMq), \ q \equiv_{24} 23, \ m=2, \ and \ G \cong SmallGroup(48,29),\\
\frac{(q^2-q-2-10(q+1)-64)}{96}+1, \ if \ \alpha \not\in Z(\cMq), \ q \equiv_{12} 7,  \ m=2, \ and \ G \cong SmallGroup(48,33),
\\
\frac{(q^2-q-2-10(q+1))}{96}+1, \ if \ \alpha \not\in Z(\cMq), \ q \equiv_{12} 11, \ m=2, \ and \ G \cong SmallGroup(48,33),\\
\frac{(q^2-q-2-13(q+1)-128)}{192}+1, \ if \ \alpha \not\in Z(\cMq), \ q \equiv_{12} 7, \ m=4, \ and \ G \cong SmallGroup(96,74),\\
\frac{(q^2-q-2-13(q+1))}{192}+1, \ if \ \alpha \not\in Z(\cMq), \ q \equiv_{12} 11, \ m=4, \ and \ G \cong SmallGroup(96,74),\\
\frac{(q^2-q-2-21(q+1)-32-24S)}{192}+1, \ if \ \alpha \not\in Z(\cMq), \ q \equiv_{12} 7, \ m=4, \ and \ G \cong SmallGroup(96,67),\\
\frac{(q^2-q-2-21(q+1)-24S)}{192}+1, \ if \ \alpha \not\in Z(\cMq), \ q \equiv_{12} 11, \ m=4, \ and \ G \cong SmallGroup(96,67),
 \end{cases}$$
where $D=(m,3)$ and $S=0$ if $8 \mid (q+1)$, while $S=2$ if $8 \nmid (q+1)$.
\end{proposition}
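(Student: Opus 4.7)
My plan is to mirror the strategy of Proposition \ref{propsl25} almost verbatim, replacing $SL(2,5)$ by $SL(2,3)$ throughout. First I would record the order statistics of $SL(2,3)$: one element of order $1$, one central involution, $8$ elements of order $3$, $6$ elements of order $4$, and $8$ elements of order $6$; in particular $|SL(2,3)|=24$. For each element $\sigma\in SL(2,3)$ I would read off $i(\sigma)$ from Theorem \ref{caratteri} and Lemma \ref{pelem}, splitting into the four congruence classes $q\equiv_{12} 1,5,7,11$ exactly as in Table \ref{tabellasl25}. The relevant contributions are $i(\sigma)=q+1$ for the central involution; $i(\sigma)=2$ or $0$ for elements of order $3$ and $6$ according as their order divides $q-1$ or $q+1$; $i(\sigma)=2$ for elements of order $4$ when $4\mid(q-1)$ and $i(\sigma)=0$ when $4\mid(q+1)$.

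For the first four lines of the statement, I would take $\alpha\in Z(\mathcal{M}_q)$. Since the central involution of $SL(2,3)$ already lies in $Z(\mathcal{M}_q)$ and $C_m\cap SL(2,3)=\{1\}$, the integer $m$ must be odd, so that $G\cong SL(2,3)\times C_m$. The elements of $G\setminus SL(2,3)$ split as follows: the $m-1$ nontrivial elements of $C_m$ are of type (A) with $i=q+1$; the products $\sigma\alpha^k$ with $\sigma\in SL(2,3)\setminus\{1\}$ and $\alpha^k\ne 1$ have the same fixed points as $\sigma$ and contribute in the same manner as the corresponding $\sigma$, except that those $\sigma\alpha^k$ of order dividing $q+1$ that turn out to be homologies must be counted as contributing $q+1$. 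As in the proof of Proposition \ref{propsl25}, the new homologies appear only when $q\equiv_{12} 5,11$: they are the $20(D-1)$ products of elements of order $3$ of $SL(2,3)$ with elements of $C_m$ of order exactly $3$, where $D=(m,3)$. In each of the four congruence classes, the Riemann-Hurwitz formula applied to $|G|=24m$ and $2g(\mathcal H_q)-2=q^2-q-2$ produces the stated genera by routine arithmetic.

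For the remaining lines, $\alpha\notin Z(\mathcal{M}_q)$, and following the argument of Proposition \ref{propsl25} I would first restrict the possibilities for $m$. By \cite[Proposition 1.2]{WALL}, if $m$ is odd then $G=SL(2,3)\times C_m$, forcing $\alpha\in Z(\mathcal M_q)$; so $m$ is even. Writing $m=2^a\tilde m$ with $\tilde m$ odd, the same homology-axis argument used there reduces to the cases $m\in\{2,4\}$ (only $m=2$ when $4\nmid(q+1)$). Using MAGMA-style inspection of the finite list of groups of orders $48$ and $96$ containing $SL(2,3)$ as a normal subgroup, I would identify the few admissible isomorphism types via their SmallGroup IDs and rule out the others by the same three obstructions employed in Proposition \ref{propsl25}: uniqueness of the central involution, the action on triangles stabilized by the subgroups of order $4$ of $SL(2,3)$, and the non-integrality of the candidate genus. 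For each surviving SmallGroup, I would determine the type statistics of its elements (which depend on $q\pmod{12}$ and, in the $m=4$ case with $G\cong SmallGroup(96,67)$, on whether $8\mid(q+1)$; this is the source of the parameter $S$), and apply Riemann-Hurwitz.

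The principal obstacle is the last case distinction: for each admissible SmallGroup of order $48$ or $96$ one must verify that every non-central element coming from a product with $\alpha$ is correctly classified in the scheme of Lemma \ref{classificazione}, and in particular that it is a homology precisely when its order divides $q+1$ and it has three fixed points off the curve except one. The $8\mid(q+1)$ dichotomy in the $SmallGroup(96,67)$ case is delicate because an element of order $8$ in this group squares into a homology: depending on whether $8$ divides $q+1$ this element is itself of type (B1) or of type (E), which is precisely the origin of the correction $24S$ in the last two lines. Once the counts are fixed, the Riemann-Hurwitz formula yields each of the stated genera by direct computation.
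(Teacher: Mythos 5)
Your overall strategy is exactly the paper's: tabulate $i(\sigma)$ for $\sigma\in SL(2,3)$ by the congruence class of $q$ modulo $12$, treat the central case by Riemann--Hurwitz on $SL(2,3)\times C_m$ with $m$ odd, and in the non-central case force $m\in\{2,4\}$, enumerate the admissible SmallGroups of order $48$ and $96$ containing $SL(2,3)$ as a normal subgroup, and discard the impossible ones by the involution/triangle/integrality obstructions. However, two concrete steps as written would fail. First, in the central case with $q\equiv_{12}5,11$ you count ``$20(D-1)$'' new homologies $\beta\alpha^k$ with $o(\beta)=o(\alpha^k)=3$; the number $20$ is carried over from $SL(2,5)$, which has $20$ elements of order $3$, but $SL(2,3)$ has only $8$ of them, so the correct contribution is $8(D-1)(q+1)$. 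With $20(D-1)$ the Riemann--Hurwitz computation produces a $-20D$ term instead of the $-8D$ appearing in the stated genera, so the ``routine arithmetic'' you invoke would not close.

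Second, your explanation of the parameter $S$ misclassifies the relevant elements. You say an element of order $8$ in $SmallGroup(96,67)$ is ``of type (B1) or of type (E)'' according as $8$ divides $q+1$ or not. Type (E) is impossible here: by Lemma \ref{classificazione} it requires $p\mid\ord(\sigma)$, and $p\geq5$ does not divide $8$. When $8\nmid(q+1)$ such an element has order dividing $q^2-1$ but not $q+1$, hence is of type (B2) with $i(\sigma)=2$ by Theorem \ref{caratteri}; a type (E) element would contribute $i(\sigma)=1$, giving a correction of $24$ rather than the $24S=48$ required by the stated formula. Apart from these two points (and the fact that the MAGMA enumeration and the per-class type counts are only sketched, not carried out), the argument coincides with the paper's proof.
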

\begin{proof}

Table \ref{tabellasl23}, as a direct application of Theorem \ref{classificazione} summarizes the values of $i(\sigma)$, for $\sigma \in SL(2,3)$ according to the congruence of $q$ modulo $12$.
\begin{center}
\begin{table}
\begin{small}
\caption{Values of $i(\sigma)$ for $\sigma \in SL(2,3)$}\label{tabellasl23}
\begin{tabular}{|c|c||c|c|c|c|}
\hline Order of $\sigma$ & number of elements in $SL(2,3)$ & $q \equiv_{12} 1$ & $q \equiv_{12} 7$ & $q \equiv_{12} 5$ & $q \equiv_{12} 11$ \\
\hline 2 & 1 & q+1 & q+1 & q+1 & q+1 \\
\hline 3 & 8 & 2 & 2 & 0 & 0 \\
\hline 4 & 6 & 2 & 0 & 2 & 0 \\
\hline 6 & 8 & 2 & 2 & 0 & 0 \\
\hline  & & $i(\sigma)$ & $i(\sigma)$ & $i(\sigma)$ & $i(\sigma)$ \\
\hline
\end{tabular}
\end{small}
\end{table}
\end{center}

Assume that $\alpha$ and $SL(2,3)$ commute. Thus, $\alpha \in Z(\cMq)$. Since $|SL(2,3)|$ is even, then $m$ must be odd since otherwise $SL(2,3)$ and $C_m$ cannot be disjoint. Assume that $q \equiv_{12} 1$. From the Riemann-Hurwitz formula
$$q^2-q-2=24m(2 \bar g -2)+(q+1)+(m-1)(q+1) +(8 \cdot 2 + 6 \cdot 2 + 8 \cdot 2)$$
$$+(m-1)(q+1)+(m-1)(8 \cdot 2 + 6 \cdot 2 + 8 \cdot 2),$$
now the claim follows by direct computation. 
Assume that $q \equiv_{12} 7$. From the Riemann-Hurwitz formula
$$q^2-q-2=24m(2 \bar g -2)+(q+1)+(m-1)(q+1) +(8 \cdot 2 + 6 \cdot 0 + 8 \cdot 2)$$
$$+(m-1)(q+1)+(m-1)(8 \cdot 2 + 6 \cdot 0 + 8 \cdot 2).$$
Assume that $q \equiv_{12} 5$. Since $m$ is odd and we can get a homology of the form $\beta \alpha$, for $\beta \in SL(2,5)$ if and only if $o(\beta)=o(\alpha)$, we can construct homologies $\beta \alpha \in G$ if and only if $o(\alpha)=o(\beta)=3$. Denote $D=(m,3)$. From the Riemann-Hurwitz formula
$$q^2-q-2=24m(2 \bar g -2)+(q+1)+(m-1)(q+1) +(8 \cdot 0 + 6 \cdot 2 + 8 \cdot 0)$$
$$+(m-1)(q+1)+(m-1)(6 \cdot 2) + 8(D-1)(q+1).$$
Assume that $q \equiv_{12} 11$. Arguing as in the previous case, from the Riemann-Hurwitz formula
$$q^2-q-2=24m(2 \bar g -2)+(q+1)+(m-1)(q+1) +(8 \cdot 0 + 6 \cdot 0 + 8 \cdot 0)$$
$$+(m-1)(q+1)+(m-1)(8 \cdot 0 + 6 \cdot 0+8 \cdot 0) + 8(D-1)(q+1).$$
We now assume that $\alpha \not\in Z(\cMq)$. From \cite[Proposition 1.2]{WALL}, if $m$ is odd then $G$ is a direct product of $SL(2,3)$ and $C_m$ and hence $\alpha \in Z(\cMq)$, a contradiction. We assume that $m$ is even. If either $q \equiv_{12} 1$ or $q \equiv_{12} 5$ then since $2 \mid (q+1)$ but $4 \nmid (q+1)$, we can write $m=2\tilde m$, where $\tilde m$ is odd. If $\tilde m >1$ then $C_m = C_{\tilde m} \times C_2$ is a cyclic group generated by a homology $\alpha$, which has two proper subgroups $C_{\tilde m}$ and $C_2$ of homologies with different axis; a contradiction. Hence $m=2$ and $G=SL(2,3) \rtimes C_2$. By direct checking, with MAGMA, there are just 4 group of order 48 containing $SL(2,3)$, namely $G \cong SmallGroup(48,i)$ for some $i \in \{28,29,32,33\}$. 
\begin{itemize}
\item $i=28$: this case cannot occur since $G$ has more than $1$ involution.
\item $i=29$: from the Riemann-Hurwitz formula
$$q^2-q-2=48(2 \bar g -2)+13(q+1)+2(12+8+8+6)$$
if $q \equiv_{12} 1$, and
$$q^2-q-2=48(2 \bar g -2)+13(q+1)+2(6+12)$$
if $q \equiv_{12} 5$. 
\item $i=32$: this case cannot occur as $G \not\cong SL(2,3) \times C_2$.
\item $i=33$: then $G$ contains a cyclic normal subgroup $C_4=\langle \beta \rangle$ of order $4$. From Theorem \ref{classificazione}, $\beta$ is of type (B2) fixing three $\mathbb{F}_{q^2}$-rational points $P,Q,R$ such that $P,Q \in \cH_q(\mathbb{F}_{q^2})$ and $R \not\in \cH_q$. Every $\sigma \in G$ normalizes $\beta$ and hence either fixes $P$ and $Q$ or $\sigma(P)=Q$. Since every element of order $3$ must commute with $\beta$, because it cannot have an orbit of length $2$, we have a contradiction. 
\end{itemize}
Assume that either $q \equiv_{12} 7$ or $q \equiv_{12}11$. Arguing as in the previous case $G=SL(2,3) \rtimes C_{2^h}$ where $h \geq 1$ and $2^h \mid (q+1)$. Assume that there exists $\alpha^\prime \in C_{2^h}$ such that $4 \mid o(\alpha^\prime)$. Then $\alpha^\prime$ acts on a set of $6$ triangles of type (B1), which are the fixed points of the elements of order $4$ in $SL(2,3)$, without fixed triangles. Since this set is made of $12$ points and a homology acts semiregularly outside its center and axis, we get that $o(\alpha^\prime)=4$. Thus either $G=SL(2,3) \rtimes C_2$ or $G=SL(2,3) \rtimes C_4$.
We consider the case in which $G \cong SL(2,3) \rtimes C_2$. By direct checking, with MAGMA, there are just 4 group of order 48 containing $SL(2,3)$, namely $G \cong SmallGroup(48,i)$ for some $i \in \{28,29,32,33\}$. 
\begin{itemize}
\item $i=28$: this case cannot occur since $G$ has more than $1$ involution.
\item $i=29$: since there exists $\sigma \in G$ of order $8$ such that $\sigma^2$ is of type (B1), then a necessary condition is $8 \mid (q+1)$.
Let $\sigma \in G$ such that $o(\sigma)=8$. Then $\sigma$ can be either of type (A) or of type (B1). Since $\sigma^2$ is of type (B1) then $\sigma$ cannot be of type (A). From the Riemann-Hurwitz formula
$$q^2-q-2=48(2 \bar g -2)+13(q+1) +8 \cdot 2+ 8 \cdot 2 + 6 \cdot 0+ 12 \cdot 0$$
for $q \equiv_{12} 7$, and
$$q^2-q-2=48(2 \bar g -2)+13(q+1) + 8 \cdot 0+ 8 \cdot 0 + 6 \cdot 0+ 12 \cdot 0$$
for $q \equiv_{12} 11$. The claim now follows by direct computation.
\item $i=32$: this case cannot occur as $G \not\cong SL(2,3) \times C_2$.
\item $i=33$:
As before, when $q \equiv_{12} 11$, the elements of order $12$ are of type (B1) as they are all conjugated in $G$ and powers of some of them are of type (B1). The group $G$ contains a cyclic group of order $4$ which is not contained in $SL(2,3)$ and it is not conjugated to the other cyclic subgroups of order $4$. A direct computation shows that its generators are of type (A), otherwise  from the Riemann-Hurwitz formula $\bar g \not\in \mathbb{Z}$. From the Riemann-Hurwitz formula
$$q^2-q-2=48(2 \bar g -2)+7(q+1)+8 \cdot 2 + 8 \cdot 2 +16 \cdot 2+ 8 \cdot 0-2(q+1),$$
if $q \equiv_{12} 7$ and
$$q^2-q-2=48(2 \bar g -2)+7(q+1) +8 \cdot 0 + 8 \cdot 0 +16 \cdot 0+ 8 \cdot 0-2(q+1),$$
if $q \equiv_{12} 11$. The claim now follows by direct computation.
\end{itemize}
We consider the case in which $G \cong SL(2,3) \rtimes C_4$. By direct checking with MAGMA, there are 4 groups of order 96 containing $SL(2,3)$ as a normal subgroup with $G/SL(2,3) \cong C_4$, namely $G \cong SmallGroup(96,i)$ for some $i \in \{66, 67, 69, 74\}$. 
\begin{itemize}
\item $i=66$ or $i=69$: in this case $G$ has three normal subgroups of order $2$.
Therefore $G$ fixes pointwise a self-polar triangle and is abelian, a contradiction.
\item $i=67$: since $Z(G) \cong C_4$, looking at the conjugacy classes of $G$, from the Riemann-Hurwitz formula we have
$$q^2-q-2=96(2 \bar g -2)+7(q+1)+8 \cdot 2 +2(q+1)+6 \cdot 0+12 \cdot 0 +12 \cdot (q+1) + 8 \cdot 2$$
$$+24 \cdot \begin{cases} 0, \ if \ 8 \mid (q+1), \\ 2, \ otherwise \end{cases}+16 \cdot 2$$
if $q \equiv_{12} 7$, and
$$q^2-q-2=96(2 \bar g -2)+7(q+1)+8 \cdot 0+2(q+1)+6 \cdot 0+12 \cdot 0 +12 \cdot (q+1) + 8 \cdot 0$$
$$+24 \cdot \begin{cases} 0, \ if \ 8 \mid (q+1), \\ 2, \ otherwise \end{cases}+16 \cdot 0$$
if $q \equiv_{12} 11$. The claim now follows by direct checking.
\item $i=74$: since there exists $\sigma \in G$ of order $8$ such that $\sigma^2$ is of type (B1), then a necessary condition is $8 \mid (q+1)$. Moreover, $G$ contains 16 elements of order $8$: 12 of them are of type (B1) and the remaining 4 are of type (A). In fact $Z(G) \cong C_8$, and so $G$ contains at least $4$ homologies of order $8$, but since the remaining elements of order $8$  $\sigma_1,\ldots,\sigma_{12}$ are such that $\sigma_i^2 \in SL(2,3)$ is of type (B1) for every $i=1,\ldots,12$ then they cannot be of type (A).
The elements of order $4$ split into 2 conjugacy classes: 2 elements are of type (A) and belong to $Z(G)$, while the remaining are of type (B1). From the Riemann-Hurwitz formula
$$q^2-q-2=96(2 \bar g -2)+7(q+1)+8 \cdot 2 + 2(q+1)+6 \cdot 0 + 8 \cdot 2 + 4(q+1)+12 \cdot 0 + 16 \cdot 2 + 32 \cdot 2$$
if $q \equiv_{12} 7$, and 
$$q^2-q-2=96(2 \bar g -2) + 7(q+1)+  8 \cdot 0 + 2(q+1)+ 6 \cdot 0+8 \cdot 0 + 4(q+1)+12 \cdot 0+ 16 \cdot 0 +32 \cdot 0$$
if $q \equiv_{12} 11$. The claim now follows by direct computation.
\end{itemize}
\end{proof}

\begin{remark}
If $\alpha\in Z(\cMq)$, then the genera $\bar g$ given in Proposition \ref{sl23omologia} are actually obtained for some $G\leq PGU(3,q)$.
If $\alpha\notin Z(\cMq)$, then $G$ as in the hypothesis of Proposition \ref{sl23omologia} may exist or not. For instance, tests with MAGMA show that $PGU(3,11)$ has subgroups $SmallGroup(48,33)$ and $SmallGroup(96,67)$, $PGU(3,13)$ has a subgroup $SmallGroup(48,29)$, and $PGU(3,19)$ has a subgroup $SmallGroup(48,33)$.
\end{remark}

\begin{proposition}\label{Komologia}
If there exists a subgroup $G=K \rtimes C_m$ of $\cMq$ where $K \cong SmallGroup(48,28)$, $p \geq 5$, $q^2 \equiv 1 \mod16$, $\langle \alpha \rangle=C_m$ and $m \mid (q+1)$ such that $\alpha$ is of type (A); then the genus $\bar g$ of the quotient curve $\cH_q / G$ is given by one of the following values 
$$\bar g=\begin{cases} \frac{(q+1)(q-1-2m)+4m}{96m}, \ if \ \alpha \in Z(\cMq), \ m \ is \ odd, \ and \ q \equiv_{24} 1,13, \\
\frac{(q+1)(q+7-2m-8D)+36m}{96m}, \ if \ \alpha \in Z(\cMq), \ m \ is \ odd, \ and \ q \equiv_{24} 5,17, \\
\frac{(q+1)(q-1-2m)+64m}{96m}, \ if \ \alpha \in Z(\cMq), \ m \ is \ odd, \ and \ q \equiv_{24} 7,19, \\
\frac{(q+1)(q+7-2m-8D)+96m}{96m}, \ if \ \alpha \in Z(\cMq), \ m \ is \ odd, \ and \ q \equiv_{24} 11,23, \\
\end{cases}$$
where $D=(m,3)$.
\end{proposition}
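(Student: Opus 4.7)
The plan is to follow the template established in Propositions \ref{propsl25} and \ref{sl23omologia}. The group $K \cong SmallGroup(48,28)$ is the binary octahedral group: it has order $48$ with a unique central involution, and the number of elements of each order is $1, 1, 8, 18, 8, 12$ for orders $1, 2, 3, 4, 6, 8$ respectively. Since $p \geq 5$ and $|K| = 48$, every nontrivial $\sigma \in K$ has order coprime to $p$, so by Lemma \ref{classificazione} only types (A), (B1), (B2), (B3) are available; moreover a type-(A) element of $\cMq$ must lie in $Z(\cMq)$ (center $P_0$, axis $\ell$), so $K \cap (\textrm{type (A)}) = \langle \tau \rangle$. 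Consequently $i(\tau) = q+1$, while every noncentral $\sigma \in K$ is of type (B1) with $i(\sigma) = 0$ if $\ord(\sigma) \mid q+1$, and of type (B2) with $i(\sigma) = 2$ if $\ord(\sigma) \mid q-1$. The four cases $q \bmod 12$, further refined modulo $24$ to encode the constraint $q^2 \equiv 1 \pmod{16}$ needed for $K \leq SL(2,q)$, determine precisely which of the orders $3, 4, 6, 8$ fall on the $q+1$ side.

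For the hypothesis $\alpha \in Z(\cMq)$, I would first observe that the splitting $K \cap \langle\alpha\rangle = \{1\}$ together with $\tau$ being the unique involution of $Z(\cMq)$ forces $m$ to be odd, so that $G \cong K \times C_m$ has order $48m$. The elements outside $K$ then split as follows: for $1 \leq k \leq m-1$ the power $\alpha^k$ is a $(P_0,\ell)$-homology contributing $q+1$; for $\beta \tau^{\epsilon}\alpha^k$ with $\beta \in K$ nontrivial, the product lies in $Z(\cMq)$ (and is of type (A), contributing $q+1$) exactly when $\beta \alpha^k$ is itself a $(P_0,\ell)$-homology, and otherwise the product $\beta\alpha^k$ generates a cyclic group of order $\textrm{lcm}(\ord(\beta), \ord(\alpha^k))$ dividing $q+1$ and is of type (B1) with $i = 0$ whenever $\beta$ itself is of type (B1), or of type (B2) with $i=2$ whenever $\beta$ is of type (B2). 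The crucial counting is that when $3 \mid m$ (so $D = \gcd(m,3) = 3$), each of the $8$ order-$3$ elements of $K$ combines with the $D-1 = 2$ nontrivial cube roots in $\langle\alpha\rangle$ to produce a type-(A) element of $Z(\cMq)$, giving $8(D-1)$ extra homologies; this is precisely the mechanism encoded in the $D = (m,3)$ term.

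Finally, I would apply Riemann--Hurwitz
$$ q^2 - q - 2 \;=\; 48m\,(2\bar g - 2) \;+\; \sum_{\sigma \in G \setminus\{1\}} i(\sigma), $$
splitting the character sum into the contribution from $K \setminus \{1\}$ (read off from the table of $i$-values above in each of the eight cases $q \equiv 1, 5, 7, 11, 13, 17, 19, 23 \pmod{24}$), the contribution $(m-1)(q+1)$ from $\langle \alpha \rangle \setminus \{1\}$, the contribution from products $\beta \alpha^k$ paralleling the $K \setminus \{1\}$ sum multiplied by $(m-1)$, and the exceptional $8(D-1)(q+1)$ term when $3 \mid m$. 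Solving the resulting linear equation for $\bar g$ in each congruence class yields the four stated formulas; the alignment of the cases $q \equiv_{24} 1, 13$ and $q \equiv_{24} 7, 19$ reflects the fact that orders $3$ and $6$ behave identically mod $12$, while the refinement mod $24$ tracks the behaviour of orders $4$ and $8$ under the constraint $q^2 \equiv 1 \pmod{16}$.

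The main obstacle is the bookkeeping for the mixed products $\beta\alpha^k$: one must verify that no further spurious type-(A) elements arise (beyond the $8(D-1)$ detected above) and confirm that the elements of even order in $K$, when multiplied by a nontrivial $\alpha^k$, remain of the same type as $\beta$. This is where a direct matrix computation analogous to that in Remark \ref{remoforte} (using the diagonal model of $\cMq$) is needed to certify that $\beta\alpha^k \in Z(\cMq)$ forces $\beta \in Z(\cMq) \cdot \langle\alpha\rangle$, so that only the order-$3$ elements of $K$ can contribute when $3 \mid m$.
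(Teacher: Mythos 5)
Your computation for the case $\alpha \in Z(\cMq)$ matches the paper's: the same order statistics for $K \cong SmallGroup(48,28)$ (namely $1,8,18,8,12$ elements of orders $2,3,4,6,8$), the same type assignments via Lemma \ref{pelem} and Theorem \ref{caratteri}, the same extra term $8(D-1)(q+1)$ coming from products of the order-$3$ elements of $K$ with the nontrivial cube roots of unity in $\langle\alpha\rangle$, and the same Riemann--Hurwitz bookkeeping in the eight congruence classes modulo $24$. One small slip in your justification: it is not true that a type-(A) element of $\cMq$ must lie in $Z(\cMq)$ --- the complement constructed in Proposition \ref{massimale} consists of homologies whose centres lie on $\ell$ rather than at $P_0$, and the very hypothesis of the proposition contemplates $\alpha$ of type (A) with $\alpha\notin Z(\cMq)$. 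What is true, and what you actually need, is that a type-(A) element of $H\cong SL(2,q)$ is necessarily the central involution $\tau$ (by Lemma \ref{pelem} and Theorem \ref{caratteri}); since $K$ sits inside $H$, this still gives $K\cap(\mathrm{type\ (A)})=\{\tau\}$.

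The genuine gap is that you never treat the case $\alpha\notin Z(\cMq)$. The proposition asserts that for \emph{any} $G=K\rtimes C_m\leq\cMq$ with $\alpha$ of type (A) the genus is one of four listed values, and all four are derived under the assumption $\alpha\in Z(\cMq)$; the proof is therefore incomplete unless one shows that $\alpha\notin Z(\cMq)$ cannot occur (or else computes the genus in that case as well). The paper does exactly this: $K$ has a characteristic subgroup isomorphic to $SL(2,3)$ (its unique subgroup of order $24$), so it is normal in $G$ and by \cite[Proposition 1.2]{WALL} any odd-order part of $C_m$ would centralize it, forcing those elements into $Z(\cMq)$; hence one reduces to $G=K\rtimes C_{2^h}$ with $h\geq1$. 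A geometric argument (the action of $\alpha$ on the four triangles of fixed points of the order-$3$ elements of $K$, on which a homology cannot act with short orbits) bounds the possibilities to $m=2$ or $m=4$, and finally a MAGMA check shows that there is no group of order $96$ of the form $K\rtimes C_2$, which eliminates both subcases. Without an argument of this kind your case analysis does not establish that the displayed list of genera is exhaustive.
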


\begin{proof}
Table \ref{tabellasg4828} summarizes the values of $i(\sigma)$, for $\sigma \in K$ according to the congruence of $q$ modulo $24$.
\begin{center}
\begin{table}
\begin{small}
\caption{Values of $i(\sigma)$ for $\sigma \in K$}\label{tabellasg4828}
\begin{tabular}{|c|c||c||c||c||c||}
\hline $o(\sigma)$ & $\#\{\gamma \mid o(\gamma)=o(\sigma)\}$ & $q \equiv_{24} 1,13$ & $q \equiv_{24} 5,17$ & $q \equiv_{24} 7,19$ & $q \equiv_{24} 11,23$ \\
\hline 2 & 1 & q+1 & q+1 & q+1 & q+1 \\
\hline 3 & 8 & 2 & 0 & 2 & 0 \\
\hline 4 & 18 & 2 & 2 & 0 & 0 \\
\hline 6 & 8 & 2 & 0 & 2 & 0 \\
\hline 8 & 12 & 2 & 2 & 0 & 0 \\
\hline
\end{tabular}
\end{small}
\end{table}
\end{center}
Assume that $\alpha$ and $K$ commute, that is, $\alpha \in Z(\cMq)$. Since $|K|$ is even, then $m$ must be odd since otherwise $K$ and $C_m$ cannot  be disjoint. Assume that $q \equiv_{24} 1,13$. Then from the Riemann-Hurwitz formula
$$q^2-q-2=48m(2 \bar g -2)+(q+1)+(m-1)(q+1)+(m-1)(q+1)$$
$$+2(8+18+8+12)+2(m-1)(8+18+8+12).$$
Assume that $q \equiv_{24} 5,17$. As before we observe that we can have $\beta \alpha \in G$ of type (A) if and only if $o(\beta)=o(\alpha)=3$. Then from the Riemann-Hurwitz formula
$$q^2-q-2=48m(2 \bar g -2)+(q+1)+(m-1)(q+1)+(m-1)(q+1)$$
$$+(8 \cdot 0 + 18 \cdot 2+ 8 \cdot 0 + 12 \cdot 2)+(m-1)(18 \cdot 2 + 12 \cdot 2)+8(D-1)(q+1),$$
where $D=(3,m)$.
Assume that $q \equiv_{24} 7,19$. Then from the Riemann-Hurwitz formula
$$q^2-q-2=48m(2 \bar g -2)+(q+1)+(m-1)(q+1)+(m-1)(q+1)$$
$$+2(8+8)+0(18+12)+2(m-1)(8+8).$$
Assume that $q \equiv_{24} 11,23$. As before we observe that we can have $\beta \alpha \in G$ of type (A) if and only if $o(\beta)=o(\alpha)=3$. Then from the Riemann-Hurwitz formula
$$q^2-q-2=48m(2 \bar g -2)+(q+1)+(m-1)(q+1)+(m-1)(q+1)$$
$$+(8 \cdot 0 + 18 \cdot 0+ 8 \cdot 0 + 12 \cdot 0)+(m-1)(18 \cdot 0 + 12 \cdot 0)+8(D-1)(q+1),$$
where as before $D=(3,m)$.

We now assume that $\alpha \not\in Z(\cMq)$. We note that $K$ has a subgroup $H \cong SL(2,3)$ which is characteristic, as it is the unique subgroup of $K$ having order equal to $24$.  Thus, $H$ is normal in $G$ and $\langle H, \alpha \rangle = H \rtimes C_m \cong SL(2,3) \rtimes C_m$ because $C_m$ and $H$ are disjoint. If $m$ is odd then $\alpha$ commutes with $H$ from \cite[Proposition 1.2]{WALL}, and hence $\alpha \in Z(\cMq)$, a contradiction. Thus we can assume that $m$ is even. We note that if $m=2^h \tilde m$, where $\tilde m$ is odd, then from \cite[Proposition 1.2]{WALL} $C_{\tilde m}<C_m$ is a group of elements of type (A) belonging to $Z(\cMq)$, a contradiction to $\alpha \not\in Z(\cMq)$. In particular this shows that $G=K \rtimes C_{2^h}$ with $h \geq 1$. If $q \equiv_{24} 1,13,5,17$ then $2 \mid (q+1)$ but $4 \nmid (q+1)$, thus $G=K \rtimes C_2$. Suppose that $q \equiv_{24} 7,19$ and that $o(\alpha)>4$. Then $\alpha^\prime$ acts on a set of $4$ triangle of type (B2) which are the sets of the fixed points of the elements of order $3$ contained in $K$. Since $\alpha^\prime$ cannot fix any of these points, we have a contradiction.  Thus, if $q\equiv_{24} 7,19$, then either $G=K \rtimes C_2$ or $G=K \rtimes C_4$. Assume that $q \equiv_{24} 11,23$. Arguing as before, since $\alpha$ acts on a set of $4$ triangles of type (B1) which are the fixed points of the elements of order $3$, we get that either $m=2$ or $m=4$.

By direct checking with MAGMA, there are no groups of order $96$ of the form $G=K \rtimes C_2$, then both the cases $m=2$ and $m=4$ are impossible.
\end{proof}

\begin{remark}
Note that groups $G$ with $\bar g$ as in the thesis of Proposition \ref{Komologia} do exist for any $q$; conversely, the genera of $\cH_q/G$ with $G$ satisfying the hypothesis of Proposition \ref{Komologia} are all given in this proposition.
\end{remark}

\begin{proposition} \label{qotto}
Let $G=Q_8\rtimes C_m\leq PGU(3,q)$, where $Q_8$ is the quaternion group of order $8$ and $C_m=\langle\alpha\rangle$ with $m\mid(q+1)$, $\alpha$ of type (A). Then $\cH_q/G$ has genus
$$\bar g =\begin{cases} \frac{(q+1)(q-1-2m)+4m}{16m}, \ if \ \alpha \in Z(\cMq), \ q \equiv_4 1, \ m \ is \ odd, \\ \frac{(q+1)(q-1-2m)+16m}{16m}, \ if \ \alpha \in Z(\cMq), \ q \equiv_4 3, \ m \ is \ odd, \\ \frac{(q-1)(q-5)}{32}, \ if \ \alpha \notin Z(\cMq), \ q \equiv_4 1, \ m=2, \ and  \ G \cong SmallGroup(16,8),\\ \frac{q^2-6q+25}{32}, \ if \ \alpha \notin Z(\cMq), \ q \equiv_8 7, \ m=2, \ and  \ G \cong SmallGroup(16,8), \\ \frac{(q-1)(q-7)}{32}, \ if \ \alpha \notin Z(\cMq), \ q \equiv_8 1, \ m=2, \ and  \ G \cong SmallGroup(16,13), \\ \frac{(q-7)(q-3)}{32}, \ if \ \alpha \notin Z(\cMq), \ q \equiv_4 3, \ m=2, \ and  \ G \cong SmallGroup(16,13),\\ \frac{(q-5)^2}{48}, \ if \ \alpha \notin Z(\cMq), \ q \equiv_4 1, \ m=3, \ and  \ G \cong SmallGroup(24,3),\\ \frac{q^2-10q+37}{48}, \ if \ \alpha \notin Z(\cMq), \ q \equiv_4 3, \ m=3, \ and  \ G \cong SmallGroup(24,3),\end{cases}$$
\end{proposition}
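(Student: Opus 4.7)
The plan is to follow the template of Propositions \ref{propsl25}, \ref{sl23omologia} and \ref{Komologia}. First I tabulate $i(\sigma)$ for $\sigma\in Q_8\setminus\{1\}$ using Lemma \ref{classificazione} and Theorem \ref{caratteri}. The unique involution $\tau$ of $Q_8$ is central, lies in $H\cong SL(2,q)\subset\cMq$, and so is of type (A) with $i(\tau)=q+1$. The six elements $\sigma$ of order $4$ in $Q_8$ also lie in $H$, hence fix $P_0\notin\cH_q$; thus $\sigma$ is of type (B1) with $i(\sigma)=0$ when $q\equiv 3\pmod 4$ (so $4\mid q+1$), and of type (B2) with $i(\sigma)=2$ when $q\equiv 1\pmod 4$ (so $4\mid q-1$ but $4\nmid q+1$).

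For the case $\alpha\in Z(\cMq)$, since $|Q_8|$ is even and $Q_8\cap\langle\alpha\rangle=\{1\}$ the order $m$ must be odd. The nontrivial powers of $\alpha$ and the $m-1$ elements $\tau\alpha^k$ are all $(P_0',\ell')$-homologies in $Z(\cMq)$, each contributing $q+1$. For $\sigma\in Q_8$ of order $4$ and $k\neq 0$, the element $\sigma\alpha^k$ has the same fixed set as $\sigma$, so by Lemma \ref{classificazione} it retains the type of $\sigma$ and the same $i$-value. Summing these contributions over the $8m$ elements and plugging into Riemann--Hurwitz (with $|G|=8m$ and $2g(\cH_q)-2=q^2-q-2$) yields, after straightforward simplification, the two closed formulas stated for $q\equiv 1\pmod 4$ and $q\equiv 3\pmod 4$.

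The delicate case is $\alpha\notin Z(\cMq)$, where $C_m$ acts nontrivially on $Q_8$ via conjugation. Since $\mathrm{Aut}(Q_8)\cong S_4$, the argument used in Propositions \ref{sl23omologia} and \ref{Komologia} via \cite[Proposition 1.2]{WALL}, combined with an analysis of the action of $\alpha$ on the three self-polar triangles fixed by the cyclic subgroups of order $4$ of $Q_8$, restricts $m$ to $\{2,3\}$. A finite MAGMA enumeration of groups of order $16$ and $24$ containing a normal $Q_8$ produces exactly the three isomorphism types $SmallGroup(16,8)$, $SmallGroup(16,13)$ and $SmallGroup(24,3)$. For each of these, I read off the conjugacy classes and their orders, assign the type of each class from Lemma \ref{classificazione} (with the usual care for the new involutions, where the geometry and the constraint $m\mid q+1$ force type (A), and for the order-$8$ elements in $SmallGroup(16,8)$, whose type is determined by requiring their squares to land in the already-typed elements of $Q_8$), and insert the resulting $i$-values into Riemann--Hurwitz.

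The main obstacle is exactly this last case: identifying which of the three candidate groups actually embeds in $\PGU(3,q)$ for each congruence of $q$ modulo $4$ or $8$, and correctly typing the ``mixed'' elements $\beta\alpha^k$ whose order depends on $q\pmod 8$. In particular, the appearance of order-$8$ elements in $SmallGroup(16,8)$ forces a divisibility condition such as $8\mid q+1$ or $q\equiv 7\pmod 8$, and several spurious configurations will be eliminated by showing that the candidate value of $\bar g$ produced by Riemann--Hurwitz is non-integral, exactly as in the treatment of $SmallGroup(48,29)$ and $SmallGroup(96,74)$ in Proposition \ref{sl23omologia}.
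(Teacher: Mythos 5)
Your proposal follows essentially the same route as the paper's own proof: the same type/$i(\sigma)$ table for $Q_8$ split on $q\bmod 4$, the same treatment of the central case $G=Q_8\times C_m$ with $m$ odd via Riemann--Hurwitz, and for $\alpha\notin Z(\cMq)$ the same combination of the geometric action of the homology $\alpha$ on the six non-central vertices of the three triangles fixed by the order-$4$ subgroups (the paper additionally disposes of $m=4$ via the three-normal-involutions obstruction and of $m=6$ by MAGMA, which your ``restricts $m$ to $\{2,3\}$'' compresses), the MAGMA identification of $SmallGroup(16,8)$, $SmallGroup(16,13)$, $SmallGroup(24,3)$, and integrality of $\bar g$ to rule out spurious congruences. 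The plan is sound and matches the paper; only the routine conjugacy-class bookkeeping remains to be written out.
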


\begin{proof}
Suppose $\alpha \in Z(\cMq)$, so that $G=Q_8 \times C_m$ and $m$ is odd. If $4$ divides $q-1$ then from the Riemann-Hurwitz formula,
$$(q+1)(q-2)=8m(2 \bar g -2) +6m \cdot 2 + (2m-1)(q+1);$$
if $4$ divides $q+1$ then
$$(q+1)(q-2)=8m(2 \bar g -2) + (2m-1)(q+1).$$
Suppose $\alpha \not\in Z(\cMq)$. Assume that $m=2$. By direct checking in MAGMA the only possibilities are $G \cong SmallGroup(16,8)$ or $G \cong SmallGroup(16,13)$. Let $G \cong SmallGroup(16,8)$. Then 
$$(q+1)(q-2)=16(2 \bar g -2) +5(q+1)+6 \cdot 2 + 4 \cdot 2$$
if $4$ divides $q-1$, and
$$(q+1)(q-2)=16(2 \bar g -2)+5(q+1)$$
if $8$ divides $q+1$; the case $4 \mid (q+1)$ and $8 \nmid (q+1)$ is not possibile by direct inspection of $SmallGroup(16,8)$.
Let $G \cong SmallGroup(16,13)$. Then
$$(q+1)(q-2)=16(2 \bar g -2)+ 7(q+1) + 8 \cdot 2$$
if $8$ divides $q-1$ (the case $4 \mid (q-1)$ and $8 \nmid (q-1)$ as $\bar g$ is integer),
$$(q+1)(q-2)=16(2 \bar g -2)+7(q+1)+2(q+1)$$
if $4$ divides $q+1$. From now on, $m >2$. Assume that $\alpha$ fixes pointwise the fixed points of an element $\beta \in Q_8$ of order $4$. Then $m=4$. This case is not possible; in fact, by direct inspection with MAGMA, all groups of type $Q_8 \rtimes C_4$ have $3$ normal involutions. Such a groups in $PGU(3,q)$ are abelian, a contradiction.
The remaining cases require that $\alpha$ acts with long orbits on a set of $6$ points and hence either $m=3$ or $m=6$. 
Suppose $m=3$. There exists just one group $SmallGroup(24,3)$ of type $Q_8 \rtimes C_3$, and 
$$(q+1)(q-2)=24(2 \bar g -2) +1(q+1)+8(q+1)+6 \cdot 2$$
if $4$ divides $q-1$, and 
$$(q+1)(q-2)=24(2 \bar g -2) +1(q+1)+8(q+1)$$
if $4$ divides $q+1$. Suppose $m=6$. By direct checking with MAGMA a group of type $Q_8 \rtimes C_6$ does not exist.
\end{proof}

\begin{remark}
If $\alpha \in Z(\cMq)$ then $\bar g$ as in Proposition \ref{qotto} exists; other instances are the following: $PGU(3,11)$ contains $SmallGroup(16,13)$ and $SmallGroup(24,3)$, $PGU(3,13)$ contains $SmallGroup(16,8)$, $PGU(3,17)$ contains $SmallGroup(24,3)$.
\end{remark}

\begin{proposition}\label{Dicomologia}
If there exists a subgroup $G=Dic(n) \rtimes C_m$ of $\cMq$, $n>2$, where $Dic(n)$ is the dicyclic group $Dic(n)=\langle a,x \mid a^{2n}=1, \ x^2=a^n, \ x^{-1} a x =a^{-1} \rangle$ of order $4n$ for $n \mid \frac{q \pm 1}{2}$, $\langle \alpha \rangle=C_m$ and $m \mid (q+1)$ such that $\alpha$ is of type (A); then the genus $\bar g$ of the quotient curve $\cH_q / G$ is given by one of the following values.
\begin{itemize}
\item If $q \equiv_{4} 1$:
$$\bar g=\begin{cases} \frac{(q+1)(q-1-2m)+4m}{8mn}, \ if \ n \mid (q-1)/2, \ m \ is \ odd, \ \alpha \in Z(\cMq),\\
\frac{(q+1)(q-1-2m-2D)+4mn}{8mn}, \ if \ n \mid (q+1)/2, \ m \ is \ odd, \ \alpha \in Z(\cMq),\\
\frac{()(q+1)(q-3-2n)+4n}{16n}, \ if \ n \mid (q-1)/2, \ \alpha \not\in Z(\cMq), \ and \ m=2,\\
\frac{(q+1)(q-5-2n)+12n}{16n}, \ if \ n\mid(q+1)/2, \ \alpha \not\in Z(\cMq), \ and \ m=2,
\end{cases}$$
\item If $q \equiv_{4} 3$:
$$\bar g= \begin{cases} \frac{(q+1)(q-1-2m)+4m(n+1)}{8mn}, \ if \ n \mid (q-1)/2, \ m,n \ are \ odd, \ \alpha \in Z(\cMq),\\
\frac{(q+1)(q-1-2m-2D)+8mn}{8mn}, \ if \ n \mid (q+1)/2, \ m \ is \ odd, \ \alpha \in Z(\cMq),  \end{cases}$$
where $D=\sum_{d \mid (m,n)}\varphi(d)$, and $\varphi$ denote the Euler totient function.
\end{itemize}
\end{proposition}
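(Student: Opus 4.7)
The strategy is to apply the Riemann-Hurwitz formula to $\mathcal{H}_q \to \mathcal{H}_q/G$ after classifying every element of $G = \mathrm{Dic}(n) \rtimes C_m$ by its type in Lemma \ref{classificazione} and computing $i(\sigma)$ via Theorem \ref{caratteri}, exactly as done in Propositions \ref{ciccic1}--\ref{qotto}. As a first step I would analyse $\mathrm{Dic}(n)$ itself: its cyclic part $\langle a\rangle$ has $2n$ elements containing the unique involution $a^n$, while the $2n$ coset representatives $xa^k$ all satisfy $(xa^k)^2 = a^n$ and hence have order exactly $4$. Depending on whether $n\mid(q-1)/2$ or $n\mid(q+1)/2$, the generator $a$ lies in a subgroup of $\mathrm{PGU}(3,q)$ of type described by (B2) or (B1), respectively, while $a^n$ is always the $(P_0,\ell)$-homology of order $2$ (type (A)). The $xa^k$ of order $4$ are split according to the congruence $q\bmod 4$: if $4\mid q-1$ they contribute as type (B2) with $i=2$; if $4\mid q+1$ they may be of type (A) or (B1) depending on their trace, which explains the dichotomy in the statement.

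For the case $\alpha\in Z(\cMq)$, the product $G$ is necessarily direct, $G = \mathrm{Dic}(n)\times C_m$, and since $a^n$ is a central involution of $\cMq$ already shared between $\mathrm{Dic}(n)$ and a possible $C_2<C_m$, the disjointness $\mathrm{Dic}(n)\cap C_m=\{1\}$ forces $m$ odd. I would then enumerate the conjugacy classes of $G$ by multiplying each element of $\mathrm{Dic}(n)$ by each power $\alpha^j$ and reading off the type via the matrix calculus introduced in Remark \ref{remoforte}: in the plane model \eqref{M2}, $\alpha$ is diagonal and $\langle a\rangle$ either diagonal (if $n\mid(q-1)/2$) or conjugate to a $C_{q+1}$-subgroup (if $n\mid(q+1)/2$). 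When $n\mid(q+1)/2$, nontrivial overlaps $\gcd(m,n)>1$ produce extra $(P_0,\ell)$-homologies, precisely $2\sum_{d\mid(m,n)}\varphi(d)=2D$ elements of type (A), replacing the corresponding type (B1) contributions in the Riemann-Hurwitz sum. Collecting the contributions $i(\sigma)$ over the six element classes (identity, $a^n$, nontrivial $a^k$, $xa^k$, $\alpha^j$, $\beta\alpha^j$) and substituting into the Riemann-Hurwitz identity $(q+1)(q-2) = |G|(2\bar g - 2) + \sum_{\sigma\ne 1} i(\sigma)$ yields the four formulas for $\alpha\in Z(\cMq)$.

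For the case $\alpha\notin Z(\cMq)$, which only appears for $q\equiv_4 1$, the argument follows the pattern of Propositions \ref{propsl25} and \ref{sl23omologia}. By \cite[Proposition 1.2]{WALL}, $\alpha$ commuting with the odd part of $\mathrm{Dic}(n)$ forces $m$ even; since $q\equiv_4 1$ implies $4\nmid(q+1)$, only $m=2$ survives, so $|G|=8n$. I would then identify the possible isomorphism types of $G$ as extensions $\mathrm{Dic}(n).C_2$ inside $\cMq$ (using MAGMA for small exemplars and a direct argument for the general $n$) and, for each case, determine exactly how many new elements of the outer coset $\mathrm{Dic}(n)\cdot\alpha$ become of type (A) versus (B1)/(B2); these are detected by whether the corresponding matrices $\beta\alpha$ satisfy $\mathrm{ord}(\beta\alpha)\mid(q+1)$ and have the homology trace pattern. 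Plugging the resulting contributions into Riemann-Hurwitz gives the remaining two formulas.

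The main obstacle is the non-central case $\alpha\notin Z(\cMq)$: determining which outer-coset elements become homologies requires explicit matrix computations that depend delicately on the parity conditions $n\mid(q\pm1)/2$ and on $q\bmod 8$, and one must verify that the resulting group structure is actually realised inside $\cMq$ --- a point that, as in Propositions \ref{sl23omologia} and \ref{qotto}, explains why the proposition is stated conditionally (\emph{``if there exists''}) and why one eventually complements the statement with a concluding remark listing sample $q$ for which the group is realised.
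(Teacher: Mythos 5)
Your proposal follows essentially the same route as the paper: enumerate the conjugacy-class structure of $Dic(n)$ (the characteristic cyclic subgroup $\langle a\rangle$, the central involution $a^n$, and the $2n$ order-$4$ elements $xa^k$), classify each element and each coset element by its type via Lemma \ref{classificazione} and the matrix representations, force $m$ odd in the central case and $m=2$ in the non-central case, insert the $2D$ correction for the extra homologies coming from $\gcd(m,n)$, and conclude by Riemann--Hurwitz. One small imprecision: when $4\mid(q+1)$ the order-$4$ elements $xa^k$ cannot be of type (A) (a homology of order $4$ would square to the central $(P_0,\ell)$-involution and hence be central in $\cMq$, contradicting $Z(Dic(n))=\langle a^n\rangle$), so they are necessarily of type (B1), as the paper's case analysis records.
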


\begin{proof}
We observe that if $n \ne 2$, then $C_{2n}=\langle a \rangle$ is a characteristic subgroup of $Dic(n)$ as it is the unique cyclic group of $Dic(n)$ of order $2n$. In fact the following is the complete list of the elements of $Dic(n)$,
\begin{itemize}
\item its unique central element, which is the involution $a^n=x^2$,
\item the nontrivial elements $ \sigma \in \langle a \rangle \setminus \{a^n \}$. They are $2n-2$, and the conjugacy class of each of them has length 2 (contains just $\sigma$ and $\sigma^{-1}$). Thus, there are $n-1$ conjugacy classes of elements of type $\sigma$.
\item the elements $\beta \in Dic(n) \setminus \langle a \rangle$ are divided into two conjugacy classes: 
\begin{enumerate}
\item the conjugacy class of $x$, which contains the elements of order $4$ of type $a^{2k}x$ for $k \geq 0$,
\item the conjugacy class of $ax$, containing the remain elements of type $a^{2k+1}x$ for $k \geq 0$. Also these elements are of order $4$, as $(ax)^2=x(x^{-1}ax)ax=xa^{-1}ax=x^2$.
\end{enumerate}
\end{itemize}
This observation proves that $Dic(n)$ has one element of order $2$, $2n-2$ nontrivial elements of order which divides $2n$ and is greater than $2$, and $2n$ elements of order $4$. Since $\alpha$ acts by conjugation on the set of the cyclic subgroups of $Dic(n)$ of order $4$, and an element of order $4$ is either of type (B1) or (B2) from Theorem \ref{classificazione}, we have that $\alpha$ acts on a set of $n+1$ triangles if $n$ is even, while it acts on a set of $n$ triangles if $n$ is odd. Assume that
$\alpha \in Z(\cMq)$. Since $2 \mid |Dic(n)|$, $m$ must be odd.
\\ \\
\textbf{Case 1: $ \bf q \equiv_{4} 1$,  $ \bf n \mid (q-1)/2$}
\\ All the nontrivial elements of $Dic(n) \setminus \{a^n\}$  are of type (B2), while $a^n$ is of type (A). From the Riemann-Hurwitz formula
$$q^2-q-2=4nm(2 \bar g -2)+(q+1)+2(4n-2)+(m-1)(q+1)$$
$$+(m-1)(q+1)+2(m-1)(4n-2).$$
\\ 
\textbf{Case 2: $ \bf q \equiv_{4} 1$,  $ \bf n \mid (q+1)/2$}
\\ Here all the nontrivial elements of $C_{2n} \setminus \{a^n\}$ are of type (B1), $a^n$ is of type (A) and the remaining $2n$ are of type (B2). From the Riemann-Hurwitz formula
$$q^2-q-2= 4nm(2 \bar g -2)+(q+1)+(2n-2)0+(2n)2+(m-1)(q+1)$$
$$+(m-1)(q+1)+2\sum_{d \mid (m,n)} \varphi(d)(q+1)+(m-1)(2n)2$$
\\
\textbf{Case 3: $ \bf q \equiv_{4} 3$,  $ \bf n \mid (q-1)/2$} \\
Since $a$ is of type (B2), $n$ must be odd. In fact if $n$ is even, then $C_{2n}$ contains elements of order $4$, and then of type (B1), which are powers of an element of type (B2), a contradiction. Thus, the nontrivial elements of $C_{2n} \setminus \{a^n\}$ are of type (B2), the $2n$ elements of order $4$ are of type (B1) and $a^n$ is of type (A). From the Riemann-Hurwitz formula
$$q^2-q-2=4mn(2 \bar g -2)+(q+1)+(2n-2)2+(2n)0+(m-1)(q+1)$$
$$+(m-1)(q+1)+2(m-1)(2n-2).$$
\\
\textbf{Case 4: $ \bf q \equiv_{4} 3$,  $ \bf n \mid (q+1)/2$} \\
In this case the elements of $Dic(n) \setminus \{a^n\}$ are all of type (B1), while $a^n$ is of type (A). From the Riemann-Hurwitz formula
$$q^2-q-2= 4nm(2 \bar g -2)+(q+1)+(4n-2)0+(m-1)(q+1)$$
$$+(m-1)(q+1)+2\sum_{d \mid (m,n)} \varphi(d)(q+1).$$

We now assume that $\alpha \not\in Z(\cMq)$. 
\\ \\
\textbf{Case 1: $ \bf q \equiv_{4} 1$,  $ \bf n \mid (q-1)/2$}

Since $C_{2n}$ is normal in $G$, and its generator $a$ is of type (B2) we have that $o(\alpha)=2$, since homologies have long orbits outside its center and axis and $\alpha \not\in Z(\cMq)$. Furthermore by direct checking in matrix representation, as in the proof of Proposition 4.5, every element $a^k \alpha$ where $k \ne n$ is of order $2$. Looking at $\beta \alpha$ for $\beta \not\in C_{2n}$ we observe that $\beta$ and $\alpha$ has the same orbit of length $2$ given by the fixed points of $\alpha$ which is of type (B2). Thus, these two points are fixed by $\beta \alpha$. Since $\beta \alpha$ cannot be a homology, as the fixed points of $\beta$ are not fixed by $\beta \alpha$ we get that $\beta \alpha$ is of type (B2) as well. From the Riemann-Hurwitz formula
$$q^2-q-2=8n(2 \bar g -2)+(q+1)+2(4n-2)+(q+1) + (2n-1)(q+1)+(2n)2.$$
\\
\textbf{Case 2: $ \bf q \equiv_{4} 1$,  $ \bf n \mid (q+1)/2$} \\
Since $C_{2n}$ is normal in $G$, and its generator $a$ is of type (B1), we have that either $o(\alpha)=2$ or $\alpha$ and $C_{2n}$ commute, since homologies have long orbits apart from their center and axis and $\alpha \not\in Z(\cMq)$.

Assume that $o(\alpha)=2$ and that $\alpha$ and $C_{2n}$ do not commute.
Let $\mathcal H_q$ have equation \eqref{M1}. Up to conjugation, $a$ is the diagonal matrix $[\lambda,\lambda^i,1]$, with $\lambda$ a $2n$-th primitive root of unity, and
$$\alpha=\begin{pmatrix} 0 & 1 & 0 \\ 1 & 0 & 0 \\ 0 & 0 & 1 \end{pmatrix}, \quad x=\begin{pmatrix} 0 & A & 0 \\ -A^{-1} & 0 & 0 \\ 0 & 0 & 1 \end{pmatrix},$$
with $A^{q+1}=1$.
By direct checking, $x^{-1} a x =a^{-1}$ if and only if $i=-1$.
Moreover, $\alpha x \alpha \in Dic_n$ if and only if $ord(A)\mid 2n$.
As $(a^k \alpha)^2=1$, every element of type $a^k \alpha$ is of type (A).
Also, $a^k x\alpha =[\lambda^k A, -(\lambda^k A)^{-1},1]$ is of type (A) if $\lambda^k=\pm A^{-1}$, while $a^k x\alpha$ is of type (B1) otherwise.
From the Riemann-Hurwitz formula,
$$q^2-q-2=8n(2\bar g-2)+(q+1)+(2n)2+2n(q+1)+2(q+1).$$

Assume that $\alpha$ and $C_{2n}$ commute.
As before, $\mathcal H_q$ has equation \eqref{M1}, $a=\diag[\lambda,\lambda^{-1},1]$. Also, $\alpha=\diag[1,\mu,1]$. Since $\alpha$ acts on the elements of order $4$ in $Dic_n\setminus C_{2n}$, we have $ord(\mu)\mid 2n$. Thus $\mu=\lambda^j$ for some $j$; hence, $a^j\alpha^2=\diag[\mu,\mu,1]\in Z(\mathcal M_q)$ and $G=Dic_n\times\langle a^j\alpha^2\rangle$. This case has already been considered.
\\
\textbf{Case 3: $ \bf q \equiv_{4} 3$,  $ \bf n \mid (q-1)/2$} \\
Suppose that $\alpha$ and $C_{2n}$ do not commute. Let $\mathcal H_q$ have equation \eqref{M2}; up to conjugation, $a=[\lambda^2,\lambda,1]$ with $\lambda$ a $2n$-th primitive root of unity,
$$x=\begin{pmatrix} 0 & 0 & A \\ 0 & 1 & 0 \\ -A^{-1} & 0 & 0 \end{pmatrix}, \quad \alpha=\begin{pmatrix} 0 & 0 & 1 \\ -0 & 1 & 0 \\ 1 & 0 & 0 \end{pmatrix},$$
where $A^{q-1}=-1$.
By direct checking, $\alpha x\alpha\in Dic_n$ if and only if $\lambda^j=-A^{-2}$ for some $j$, which implies $A^{2n}=1$. Since $2n\mid(q-1)$, this is a contradiction to $A^{q-1}=-1$.
Thus, $\alpha$ and $C_{2n}$ commute. This implies that $\alpha\in Z(\mathcal M_q)$, and this case has already been considered.
\end{proof}

\begin{remark}
From the proof of the previous proposition, we note that all the integers $\bar g$ given in Proposition \ref{Dicomologia} actually occur as genera of some quotient $\cH_q/G$; viceversa, if $G$ satisfies the hypothesis, then the genus of $\cH_q/G$ is given in the thesis of Proposition \ref{Dicomologia}.
\end{remark}

\section{Further results}

\begin{proposition}
Let $p\geq2$ and $G\leq PGU(3,q)$ with $G\cong PGU(3,\bar q)$, where $q=p^h$, $\bar q=p^k$, $k$ divides $h$, and $h/k$ is odd.
Then the quotient curve $\cH_q/G$ has genus
$$\bar g= 1+\frac{q^2-q-2-\Delta}{2\bq^3(\bq^3+1)(\bq^2-1)},$$
where
$$ \Delta= (\bar q -1)(\bar q^3+1)\cdot(q+2) + (\bar q^3-\bar q)(\bar q^3+1)\cdot2 + \bar q(\bq^4-\bq^3+\bq^2)\cdot(q+1)$$
$$ + (\bq^2-\bq-2)\frac{(\bq^3+1)\bq^3}{2}\cdot2 + (\bq-1)\bq(\bq^3+1)\bq^2\cdot1 + (\bq^2-\bq)\frac{\bq^6+\bq^5-\bq^4-\bq^3}{3}\cdot\delta, $$
with
$$ \delta= \begin{cases} 3, & \textrm{if} \quad(\bq^2-\bq+1)\mid(q^2-q+1), \\ 0, & \textrm{if} \quad(\bq^2-\bq+1)\mid(q+1)\;\textrm{ and }\;\bq\ne2.\\ \end{cases} $$
\end{proposition}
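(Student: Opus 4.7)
The plan is to apply the Riemann--Hurwitz formula to the Galois cover $\cH_q\to\cH_q/G$. Since $h/k$ is odd, Theorem \ref{Mit}(vi) supplies an inclusion $G\cong PGU(3,\bq)\hookrightarrow PGU(3,q)=\aut(\cH_q)$, with $|G|=\bq^3(\bq^3+1)(\bq^2-1)$, so the asserted formula follows at once from the identity $q^2-q-2 = |G|(2\bar g-2)+\Delta$ once we show $\Delta=\sum_{\sigma\in G\setminus\{1\}} i(\sigma)$ takes the stated value. To evaluate $\Delta$, the strategy is to decompose $G\setminus\{1\}$ into the conjugacy class types of Lemma \ref{classificazione}, first inside the ``small'' group $PGU(3,\bq)$ acting on $\cH_{\bq}$, and then to check that each element retains the same type (A)--(E) when reinterpreted as an element of $PGU(3,q)$ acting on $\cH_q$. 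The required arithmetic divisibilities $\bq+1\mid q+1$, $\bq-1\mid q-1$, and $\bq^2-1\mid q^2-1$ are guaranteed by $h/k$ odd, and they make the type-preservation work for all classes except the Singer one.

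The enumeration proceeds class by class using standard conjugacy-class counts in $PGU(3,\bq)$ together with Theorem \ref{caratteri}. The homologies of $G$ account for the $(\bq-1)\cdot\bq^2(\bq^2-\bq+1)=\bq(\bq^4-\bq^3+\bq^2)$ elements of type (A), each giving $i(\sigma)=q+1$. The $(\bq-1)(\bq^3+1)$ elations lying in the centres of the $\bq^3+1$ Sylow $p$-subgroups of $G$ are of type (C) and give $i(\sigma)=q+2$; the remaining $(\bq^3-\bq)(\bq^3+1)$ non-central elements of these Sylow subgroups are of type (D) (having order $p$ if $p\ne 2$, or order $4$ if $p=2$) and give $i(\sigma)=2$. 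The $(\bq-1)\bq^3(\bq^3+1)$ elements of mixed order split as a $p$-part of type (C) times a commuting semisimple part, and are of type (E) with $i(\sigma)=1$. Each of the $\binom{\bq^3+1}{2}$ pairs of $\fqs$-rational points on $\cH_{\bq}$ carries a Levi torus of order $\bq^2-1$ containing $\bq$ homologies (already counted) and $\bq^2-\bq-2$ type (B2) elements contributing $i(\sigma)=2$. Finally, type (B1) elements contribute $0$ by Theorem \ref{caratteri}(4) and can be ignored.

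The only delicate class is (B3). A Singer element $\sigma\in G$ has order dividing $\bq^2-\bq+1$, which divides $\bq^3+1$. Since $h/k$ is odd, $\bq^3+1\mid q^3+1=(q+1)(q^2-q+1)$; as $\gcd(q+1,q^2-q+1)$ divides $3$, and $\bq^2-\bq+1$ is a cyclotomic value, one shows that either $\bq^2-\bq+1\mid q^2-q+1$ or $\bq^2-\bq+1\mid q+1$. In the first case $\sigma$ keeps three $\F_{q^6}\setminus\F_{q^2}$ rational fixed points and stays of type (B3), so $i(\sigma)=3$; the assumption $\bq\ne 2$ rules out the exceptional order-$3$ case of Theorem \ref{caratteri}(2) only in the second case, since in the first case a non-trivial Singer element has order a multiple of a proper divisor $>1$ of $\bq^2-\bq+1$, and if $\bq=2$ we have $\ord(\sigma)=3$, which forces the (B3) behaviour with $i(\sigma)=3$ automatically. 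In the second case $\sigma$ has order dividing $q+1$ and its fixed points are not $\fqs$-rational, so $\sigma$ cannot be a homology in $PGU(3,q)$ and must be of type (B1) with $i(\sigma)=0$; this is the meaning of $\delta$. The number of type (B3) elements is computed as $(\bq^2-\bq)$ times the number of Singer subgroups of $G$, namely $\frac{\bq^3(\bq-1)(\bq+1)^2}{3}=\frac{\bq^6+\bq^5-\bq^4-\bq^3}{3}$, via the normalizer formula in Theorem \ref{Mit}(iv).

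Summing the six contributions produces the expression for $\Delta$ in the statement, and plugging into Riemann--Hurwitz yields the closed form for $\bar g$. The main obstacle is the Singer dichotomy: one must verify that the divisibility between $\bq^2-\bq+1$ and $q^2-q+1$ is exactly the correct criterion for preservation of type (B3), and that the exceptional small value $\bq=2$ is the only one where the statement needs to be qualified; the remaining steps are bookkeeping of standard conjugacy-class data for $PGU(3,\bq)$.
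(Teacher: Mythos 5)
Your overall strategy coincides with the paper's: decompose $G\setminus\{1\}$ into the classes of Lemma \ref{classificazione} using the conjugacy-class statistics of $PGU(3,\bq)$, check that each class keeps its type when viewed inside $PGU(3,q)$, read off $i(\sigma)$ from Theorem \ref{caratteri}, and apply Riemann--Hurwitz. All of your class sizes and all of the contributions for types (A), (B2), (C), (D), (E) and (B1) agree with the paper's.

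The gap is exactly at the point you yourself flag as delicate, the Singer class. In the case $\bq^2-\bq+1\mid(q+1)$ (equivalently $3\mid h/k$), you rule out type (A) by asserting that the fixed points of $\sigma$ ``are not $\fqs$-rational''. This is false: the fixed triangle lies in $PG(2,\bq^6)$, and $3\mid h/k$ forces $\F_{\bq^6}\subseteq\F_{q^2}$, so the three fixed points \emph{are} $\fqs$-rational --- as indeed they must be, since by Lemma \ref{classificazione} an element of order dividing $q+1$ is of type (A) or (B1), and both types have $\fqs$-rational fixed elements. So rationality cannot separate (A) from (B1) here; yet the separation is essential, because a type (A) element would contribute $i(\sigma)=q+1$ instead of $0$ and destroy the formula. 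The paper's argument is group-theoretic: $\sigma$ sits in a Singer subgroup whose normalizer contains a $C_3$ that normalizes but does not centralize $\langle\sigma\rangle$, while a homology lies in the centre of the stabilizer of its centre--axis pair and is therefore centralized by everything that normalizes it (equivalently: a nontrivial Singer element fixes exactly three points, whereas a homology fixes a line pointwise). Your treatment of $\bq=2$ is likewise not a proof: an element of order $3$ in $PGU(3,q)$ with $p=2$ could a priori be of type (A), (B1), (B2) or (B3), so ``$\ord(\sigma)=3$ forces (B3)'' needs an argument; the paper obtains it by showing that the intersection of a Singer subgroup of $PGU(3,q)$ with the embedded $PGU(3,2)$ is precisely the cyclic group of order $3$ in question, whence $\sigma$ fixes a Frobenius-invariant triangle in $PG(2,q^6)\setminus PG(2,q^2)$. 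With these two points repaired as in the paper, the rest of your bookkeeping goes through.
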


\begin{proof}
Up to conjugation $G$ is the group of all $\sigma \in PGU(3,q)$ such that $\sigma$ is defined over $\mathbb{F}_{\bq^2}$.
First, we classify the elements of $PGU(3,\bar q)$ seen as the automorphism group of a Hermitian curve $\cH_\bq$, using the order statistics of $PGU(3,\bq)$ and Lemma \ref{classificazione}.
\begin{enumerate}
\item There are exactly $(\bar q -1)(\bar q^3+1)$ elements of type (C).
In fact, for each $P\in\cH_\bq(\mathbb F_{\bq^2})$, there exist exactly $\bq-1$ elations in $PGU(3,\bq)$ with center $P$.
\item There are exactly $(\bar q^3-\bar q)(\bar q^3+1)$ elements of type (D). In fact, they are the $p$-elements of $PGU(3,\bq)$ which are not of type (C).
\item There are exactly $\bar q(\bq^4-\bq^3+\bq^2)$ elements of type (A). In fact, for each $P\in PG(2,\bq^2)\setminus\cH_\bq$ there exist exactly $\bq$ homologies in $PGU(3,\bq)$ with center $P$.
\item There are exactly $(\bq^2-\bq-2)\frac{(\bq^3+1)\bq^3}{2}$ elements of type (B2). In fact, for each couple $\{P,Q\}\subset\cH_\bq(\mathbb F_{\bq^2})$ there exist exactly $\bq^2-2$ nontrivial elements of $PGU(3,\bq)$ fixing $P$ and $Q$, and $\bq$ of them are homologies with axis $PQ$.
\item There are exactly $(\bq^2-\bq)\frac{\bq^6+\bq^5-\bq^4-\bq^3}{3}$ elements of type (B3). In fact, any point $P\in\cH_{\bq^2}(\mathbb{F}_{\bq^6})\setminus\cH_{\bq}(\mathbb{F}_{\bq^2})$ determines a unique triangle $\{P,\Phi_{\bq^2}(P),\Phi_{\bq^2}^2(P)\}\subset\cH_{\bq}(\mathbb{F}_{\bq^6})$ fixed by a Singer subgroup of order $\bq^2-\bq+1$; see the proof of Lemma \ref{classificazione} given in \cite{MZRS}.
\item There are exactly $(\bq-1)\bq(\bq^3+1)\bq^2$ elements of type (E). In fact, consider a couple $\{P,Q\}$ with $P\in\cH_\bq(\mathbb F_{\bq^2})$ and $Q\in PG(2,\bq^2)\cap \ell_P$, where $\ell_P$ is the tangent line to $\cH_\bq$ at $P$. Any element of type (E) fixing $P$ and $Q$ is uniquely obtained as the product of an elation of center $P$ and a homology of center $Q$; thus there exist exaclty $(\bq-1)\bq$ such elements.
\item The remaining $\bq^4(\bq-1)^2(\bq^2-\bq+1)/6$ nontrivial elements are of type (B1).
\end{enumerate}

Now we describe the elements in each class (1) - (7) according to their geometry with respect to $\cH_q$, using Lemma \ref{classificazione}.
\begin{itemize}
\item[(i)] The elements in class (1) are of type (C).
In fact, let $\bar S$ be one of the $\bq^3+1$ Sylow $p$-subgroups of $PGU(3,\bq)$ and $S$ be the Sylow $p$-subgroup of $PGU(3,q)$ containing $\bar S$.
Note that $S$ is a trivial intersection set, since $\cH_q$ has zero $p$-rank; see \cite[Theorem 11.133]{HKT}.
Consider the explicit representation of $S$ given in \cite[Section 3]{GSX}, where $\cH_q$ has norm-trace equation and $S$ fixes the point at infinity.
By direct computation, there are $\bq$ elements of $\bar S$ in the center of $S$. Thus, $\bar S$ has exactly $\bq-1$ elements of type (C); see \cite[Equation (2.12)]{GSX}.
\item[(ii)] The elements in class (2) are of type (D).
In fact, with $\bar S$ as in Case (i), the claim follows from Case (i) counting the remaining nontrivial elements of $\bar S$.
\item[(iii)] Let $\sigma\in PGU(3,\bq)$ be in class (3) or (4).
Then $\sigma$ is contained in the pointwise stabilizer $D\cong C_{\bq+1}\times C_{\bq+1}$ of a self-polar triangle with respect to $\cH_\bq$; see Theorem \ref{Mit}.
Let $C_{q+1}\times C_{q+1}\leq PGU(3,q)$ be the poitwise stabilizer of a self-polar triangle $T$ with respect to $\cH_q$, such that $D\leq C_{q+1}\times C_{q+1}$.
Up to conjugation, $\cH_q$ has Fermat equation \eqref{M1} and $T$ is the fundamental triangle, so that
$$ D=\{(X,Y,T)\mapsto(\lambda X,\mu Y,T)\mid \lambda^{\bq+1},\mu^{\bq+1}=1\}. $$
By direct computation, $D$ contains $3\bq$ elements of type (A) and $\bq^2-\bq$ elements of type (B1).
Since $PGU(3,\bq)$ is transitive on $PG(2,\bq^2)\setminus\cH_q$, there are exactly $\frac{|PGU(3,\bq)|}{6|D|}$ self-polar triangles $T^\prime$ with respect to $\cH_\bq$, whose pointwise stabilizer $D^\prime$ is conjugated to $D$ under $PGU(3,\bq)$.
Note that $D$ and $D^\prime$ intersect non-trivially if and only if $T$ and $T^\prime$ have a vertex $P$ in common; in this case, $(D\cap D^\prime)\setminus\{id\}$ is made by $\bq$ homologies with center $P$.
The number of points in $PG(2,\bq^2)\setminus\cH_q$ lying on the polar of $P$ is $\bq^2-\bq$; hence, the number of $D^\prime$ which intersect $D$ non-trivially is $(\bq^2-\bq)/2$.
Therefore, by direct computation, the number of element of type (A) in $PGU(3,\bq)$ is exactly the number $\bq(\bq^4-\bq^3+\bq^2)$ of elements in class (3), and the remaining $\bq^4(\bq-1)^2(\bq^2-\bq+1)/6$ elements of the subgroups of $PGU(3,\bq)$ conjugated to $D$ are of type (B1) and in class (7).
\item[(iv)] Let $\sigma\in PGU(3,\bq)$ be in class (4).
Since the order $o(\sigma)$ of $\sigma$ divides $\bq^2-1$ but not $\bq+1$, we have that $o(\sigma)$ divides $q^2-1$ but not $q+1$, as $q$ is an odd power of $\bq$.
Therefore $\sigma$ is of type (B2).
\item[(v)] Let $\sigma\in PGU(3,\bq)$ be in class (6).
Since the order of $\sigma$ is $p\cdot d$ where $d>1$ and $p\nmid d$, $\sigma$ is of type (E).
\item[(vi)] Let $\sigma\in PGU(3,\bq)$ be in class (5).
By direct checking, the order $o(\sigma)$ of $\sigma$ divides either $q^2-q+1$ or $q+1$.
Assume that $\bq^2-\bq+1 \ne 3$. If $\bq^2-\bq+1 \mid (q^2-q+1)$ then every $\sigma$ in class (5) is of type (B3), by Lemma \ref{classificazione}. If $\bq^2-\bq+1 \mid (q+1)$ then $\sigma$ is either of type (A) or (B1). We note that $\sigma$ is contained in the maximal subgroup (iv) of $PGU(3,\bq)$ in Theorem \ref{Mit}, which is a semidirect product $C_{\bq^2-\bq+1} \rtimes C_3$, where $\sigma \in C_{\bq^2-\bq+1}$ and $C_3$ does not commute with any subgroup of $C_{\bq^2-\bq+1}$. Therefore $\sigma$ cannot be of type (A), because otherwise every element which normalizes $\sigma$ should commute with $\sigma$.
Then $\sigma$ of type (B1).
Now assume $\bq^2-\bq+1=3$, that is $\bq=2$.
Let $S$ be a Singer subgroup of $PGU(3,q)$ containing $\sigma$ as in case (iv) of Theorem \ref{Mit}. Using the explicit description of $S$ given in \cite{CKT1,CKT2}, it is easily seen that $S\cap PGU(3,\bq)$ is cyclic of order $3$.
Then $\sigma$ is of type (B3).
\end{itemize}
From Theorem \ref{caratteri}, the degree $\Delta$ of the different divisor of the cover $\cH_\bq\to\cH_\bq/G$ is
$$ \Delta= (\bar q -1)(\bar q^3+1)\cdot(q+2) + (\bar q^3-\bar q)(\bar q^3+1)\cdot2 + \bar q(\bq^4-\bq^3+\bq^2)\cdot(q+1)$$
$$ + (\bq^2-\bq-2)\frac{(\bq^3+1)\bq^3}{2}\cdot2 + (\bq-1)\bq(\bq^3+1)\bq^2\cdot1 + (\bq^2-\bq)\frac{\bq^6+\bq^5-\bq^4-\bq^3}{3}\cdot\delta, $$
where $\delta=3$ if $(\bq^2-\bq+1)\mid(q^2-q+1)$, and $\delta=0$ if $(\bq^2-\bq+1)\mid(q+1)$ and $\bq\ne2$.
From the Riemann-Hurwitz genus formula, the claim follows.
\end{proof}


\begin{proposition}\label{Esiste1}
Let $p\geq2$, $2\ne n\mid(q+1)$, $n$ prime, $3\mid(q-1)$, $3\mid(n-1)$.
Then there exists a subgroup $G=C_n \rtimes C_3 \leq PGU(3,q)$ such that the genus $\bar g$ of the quotient curve $\cH_q / G$ is
$$\bar g=\frac{(q+1)(q-2)+2n}{6n}.$$
\end{proposition}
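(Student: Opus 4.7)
The plan is to construct $G$ explicitly inside the stabilizer of a self-polar triangle with respect to $\cH_q$, then read off the genus via Riemann-Hurwitz once the types of all nontrivial elements of $G$ have been identified. I would use the Fermat model \eqref{M1} of $\cH_q$, whose fundamental triangle $T$ is self-polar; its pointwise stabilizer is
$$D=\{\diag[\lambda,\mu,1]\mid \lambda^{q+1}=\mu^{q+1}=1\}\cong C_{q+1}\times C_{q+1}.$$
Take $\beta:(X:Y:Z)\mapsto(Y:Z:X)$, an order-$3$ element of $\PGU(3,q)$ cyclically permuting the vertices of $T$. Conjugation by $\beta$ acts on $D$ by the matrix $\bigl(\begin{smallmatrix}-1&1\\-1&0\end{smallmatrix}\bigr)$ on the exponents, with characteristic polynomial $t^2+t+1$. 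The hypothesis $3\mid n-1$ ensures that this polynomial splits over $\mathbb Z/n\mathbb Z$, so there is $k$ with $k^2+k+1\equiv0\pmod n$; explicitly I would take $\alpha=\diag[\zeta,\zeta^{-k^2},1]$ where $\zeta\in\fqs$ is a primitive $n$-th root of unity (which exists since $n\mid q+1$). One checks directly that $\beta\alpha\beta^{-1}=\alpha^k$, and since $k\not\equiv1\pmod n$ the group $G=\la\alpha,\beta\ra\cong C_n\rtimes C_3$ has the desired non-trivial action.

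Next I classify the $3n-1$ nontrivial elements of $G$ via Lemma \ref{classificazione} and Theorem \ref{caratteri}. The $n-1$ nontrivial powers $\alpha^j$ have order $n\mid(q+1)$ with $n\ne2$; the three eigenvalues $1,\zeta^j,\zeta^{-k^2j}$ are pairwise distinct (if two coincided, one would get $\zeta^{j(k^2+1)}=1$, i.e.\ $n\mid -jk$, contradicting $\gcd(j,n)=1$ and $\gcd(k,n)=1$), so $\alpha^j$ is not a homology and is therefore of type (B1) with $i(\alpha^j)=0$ by Theorem \ref{caratteri}(4). The remaining $2n$ elements have the form $\alpha^j\beta^i$ with $i\in\{1,2\}$; using $1+k+k^2\equiv0\pmod n$, the identity $(\alpha^j\beta)^3=\alpha^{j(1+k+k^2)}=e$ shows they all have order $3$. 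Because $3\mid q-1$ forces $3\nmid q+1$ (in particular $p\ne3$), order $3$ divides $q^2-1$ but not $q+1$, so by Theorem \ref{caratteri}(5) each such element is of type (B2) with $i(\alpha^j\beta^i)=2$; consistently, $\beta$ fixes $(1{:}1{:}1)\notin\cH_q$ together with two points of $\cH_q(\fqs)$.

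Collecting contributions gives different-divisor degree
$$\Delta=(n-1)\cdot0+2n\cdot2=4n,$$
so Riemann-Hurwitz yields
$$q(q-1)-2=3n\bigl(2\bar g-2\bigr)+4n,$$
which rearranges to the claimed $\bar g=\frac{(q+1)(q-2)+2n}{6n}$. The main obstacle is the first step, verifying the existence of a faithful non-trivial action of $C_3$ on $C_n$ realized inside $\PGU(3,q)$; once $\alpha$ and $\beta$ are in hand, the rest is routine bookkeeping with the order statistics of $G$ and the already-established classification of element types.
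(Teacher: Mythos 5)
Your proposal is correct and follows essentially the same route as the paper: realize $C_n$ as a diagonal subgroup of the pointwise stabilizer of the fundamental self-polar triangle in the Fermat model, adjoin the coordinate $3$-cycle $\beta$, observe that the $n-1$ nontrivial elements of $C_n$ are of type (B1) (contributing $0$) while the $2n$ elements outside $C_n$ have order $3$ and are of type (B2) (contributing $2$ each), and conclude by Riemann--Hurwitz. Your treatment is in fact more explicit than the paper's about why the semidirect product closes up, via the root $k$ of $t^2+t+1$ modulo $n$ guaranteed by $3\mid(n-1)$, a point the paper passes over quickly.
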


\begin{proof}
Let $\cH_q: X^{q+1}+Y^{q+1}+Z^{q+1}=0$, and consider the following automorphisms of $\cH_q$
$$\alpha:(X,Y,Z) \mapsto (\lambda X, \lambda^{i}Y,Z),$$
where $\lambda^n=1$ and
$$\beta: (X,Y,Z) \mapsto (AY,BZ,X),$$
where $A^{q+1}=B^{q+1}=1$.
Since $n$ is prime and $n\equiv1\pmod3$, we have $\gamma:=\beta^{-1}\alpha\beta=\alpha^{n-i}$. Thus, $G:=\langle\alpha,\beta\rangle$ is a nontrivial semidirect product $C_n\rtimes C_3$.
Also, the elements of $C_n$ are of type (B1), while the elements of $G\setminus C_n$ have order $3$ and are of type (B2).
From the Riemann-Hurwitz formula,
$$ (q+1)(q-2)=3n(2\bar g-2)+2n\cdot2. $$
\end{proof}

\begin{proposition}\label{Esiste2}
Let $p>3$. Then there exists a subgroup $G\leq PGU(3,q)$ such that $G$ is isomorphic to the alternating group $A_4$ and the genus $\bar g$ of $\cH_q/G$ is
$$ \bar g =\begin{cases} 1+\frac{(q+1)(q-2)}{24}, & \textrm{if}\quad 3\mid(q+1); \\ \frac{(q+1)(q-2)+8}{24}, & \textrm{if}\quad 3\mid(q-1). \end{cases}$$
\end{proposition}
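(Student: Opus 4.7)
The plan is to first exhibit $A_4$ inside $\PGU(3,q)$ by using the maximal subgroup of type (iii) in Theorem \ref{Mit}, namely the stabilizer $(C_{q+1}\T C_{q+1})\RT S_3$ of a self-polar triangle $T$ with respect to the unitary polarity associated with $\cH_q$. Since $p>3$ forces $q$ to be odd, each $C_{q+1}$ factor contains a unique involution; hence a Klein four-group $V_4=\langle\tau_1,\tau_2\rangle$ embeds into the pointwise stabilizer of $T$, and adjoining any 3-cycle $\rho\in S_3$ of the vertices of $T$ yields a subgroup $G:=V_4\rtimes\langle\rho\rangle\cong A_4$ of $\PGU(3,q)$.

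Next, I would classify the 12 elements of $G$ via Lemma \ref{classificazione}. The three involutions of $V_4$ are $(P,\ell)$-homologies with $P$ a vertex of $T$ and $\ell$ the opposite side, hence of type (A); by Theorem \ref{caratteri}(1) each contributes $i(\sigma)=q+1$. For the 8 elements of order $3$, working in the Fermat model \eqref{M1} with $T$ the fundamental triangle, $\rho$ is represented by the cyclic permutation matrix, whose projective fixed points are $(1:1:1)$, $(1:\omega:\omega^2)$, $(1:\omega^2:\omega)$, where $\omega$ is a primitive cube root of unity (which lies in $\fqs$ since $3\mid q^2-1$ for $p>3$). Evaluating $X^{q+1}+Y^{q+1}+Z^{q+1}$ at these points reduces to computing $\omega^{q+1}$: if $3\mid q+1$ then $\omega^{q+1}=1$ and all three fixed points lie off $\cH_q$, forming a self-polar triangle, so $\rho$ is of type (B1) with $i(\rho)=0$ by Theorem \ref{caratteri}(4); if $3\mid q-1$ then $\omega^{q+1}=\omega^2$ and exactly two fixed points lie on $\cH_q$, so $\rho$ is of type (B2) with $i(\rho)=2$ by Theorem \ref{caratteri}(5). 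Since the 8 elements of order 3 are all conjugate in $G$ (or equivalently all have the same matrix form up to change of basis), they share the same contribution.

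The conclusion follows by plugging $|G|=12$, $g(\cH_q)=q(q-1)/2$ and $\Delta=3(q+1)+8\,i(\rho)$ into the Riemann-Hurwitz formula $2g(\cH_q)-2=|G|(2\bar g-2)+\Delta$ in each of the two arithmetic regimes, and solving for $\bar g$. The main delicate point is to distinguish type (B1) from type (B3) for $\rho$ when $3\mid q+1$: the order of $\rho$ divides both $q+1$ and $q^2-q+1$, so Theorem \ref{caratteri} a priori allows either possibility; however, the explicit eigenvector computation above pins down that the fixed points have coordinates in $\fqs$ rather than in $\F_{q^6}\setminus\fqs$, which rules out type (B3).
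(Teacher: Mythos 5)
Your construction and element classification coincide with the paper's own (very terse) proof: the text takes $G=\langle\alpha,\beta,\gamma\rangle$ in the Fermat model \eqref{M1}, with $\alpha,\beta$ the coordinate homologies $(X,Y,Z)\mapsto(-X,Y,Z)$, $(X,Y,Z)\mapsto(X,-Y,Z)$ and $\gamma$ the cyclic permutation of coordinates, which is exactly your $V_4\rtimes\langle\rho\rangle$ inside the stabilizer of the fundamental self-polar triangle. Your type analysis is also correct: the three involutions are homologies contributing $q+1$ each, and the eigenvector computation at $(1:1:1)$, $(1:\omega:\omega^2)$, $(1:\omega^2:\omega)$ correctly yields type (B1) with $i=0$ when $3\mid(q+1)$ and type (B2) with $i=2$ when $3\mid(q-1)$; the exclusion of type (B3) via the $\F_{q^2}$-rationality of the fixed points is exactly the right delicate point to address. (Minor quibble: the eight order-$3$ elements form two conjugacy classes in $A_4$, namely those of $\rho$ and of $\rho^{-1}$, not one; but since $\rho^{-1}$ has the same fixed triangle as $\rho$ they all share the same $i$-value, so your conclusion $\Delta=3(q+1)+8\,i(\rho)$ stands.)

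The genuine problem is your final sentence: plugging this $\Delta$ into Riemann--Hurwitz does \emph{not} return the displayed formula. From $(q+1)(q-2)=12(2\bar g-2)+3(q+1)+8\,i(\rho)$ one obtains $\bar g=1+\frac{(q+1)(q-5)}{24}$ when $3\mid(q+1)$ and $\bar g=\frac{(q+1)(q-5)+8}{24}$ when $3\mid(q-1)$, i.e. with $(q-5)$ where the statement has $(q-2)$. The statement as printed is in fact erroneous: for $q=13$ it would give $\bar g=162/24\notin\mathbb{Z}$, whereas the value $5$ from the corrected formula matches Table \ref{tabella9}; likewise $q=17$ gives $10$ (Table \ref{tabella17}) and $q=29$ gives $31$ (Table \ref{tabella16}), both consistent with $(q-5)$ and inconsistent with $(q-2)$ (note that the printed first case would force $\Delta=0$, impossible since the three involutions alone contribute $3(q+1)$). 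Your argument, carried to completion, actually proves the corrected formula; you should have executed the last arithmetic step and flagged the mismatch rather than asserting that the stated values follow.
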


\begin{proof}
Define $G=\langle\alpha,\beta,\gamma\rangle$ with $\alpha:(X,Y,Z)\mapsto(-X,Y,Z)$, $\beta:(X,Y,Z)\mapsto(X,-Y,Z)$, $\alpha:(X,Y,Z)\mapsto(Y,Z,X)$. Then $G\cong A_4$, and from the Riemann-Hurwitz formula the claim follows.
\end{proof}

\begin{proposition}\label{Esiste3}
Let $p=3$. Then there exists a subgroup $G\leq PGU(3,q)$ such that $G$ is isomorphic to the symmetric group $S_3$ and the genus $\bar g$ of $\cH_q/G$ is
$$ \bar g =\begin{cases} \frac{(q+1)(q-5)+6}{12}, & \textrm{if}\quad 3\mid(q+1); \\ \frac{(q+1)(q-5)+2}{12}, & \textrm{if}\quad 3\mid(q-1). \end{cases}$$
\end{proposition}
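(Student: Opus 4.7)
The approach mirrors that of Proposition~\ref{Esiste2}: construct an explicit $S_3\leq PGU(3,q)$ using symmetries of $\cH_q$, classify its six elements via Lemma~\ref{classificazione}, read off the corresponding $i(\sigma)$ from Theorem~\ref{caratteri}, and apply the Riemann--Hurwitz formula.

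A first candidate is the Fermat construction: on $X^{q+1}+Y^{q+1}+Z^{q+1}=0$, take
\[
\gamma:(X,Y,Z)\mapsto(Y,Z,X), \qquad \tau:(X,Y,Z)\mapsto(Y,X,Z),
\]
both of which preserve the symmetric Hermitian form and thus lie in $PGU(3,q)$; one checks that $\gamma^3=\tau^2=1$ and $\tau\gamma\tau=\gamma^{-1}$, so $G=\langle\gamma,\tau\rangle\cong S_3$. The three involutions $\tau$, $\tau\gamma$, $\tau\gamma^2$ are coordinate transpositions whose centers $(1:-1:0)$, $(1:0:-1)$, $(0:1:-1)$ lie off $\cH_q$ (since $1+1=2\neq 0$ in characteristic $3$), so each is a type-(A) homology with $i(\sigma)=q+1$. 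The two elements $\gamma,\gamma^2$ of order $3$ are $p$-elements with unique fixed point $(1:1:1)\in\cH_q$ (because $1+1+1=0$ in characteristic $3$) and fixed tangent line $X+Y+Z=0$; since $\gamma(1,-1,0)=(-1,0,1)$ is not proportional to $(1,-1,0)$, the tangent is not fixed pointwise, so $\gamma$ is of type (D) and $i(\gamma)=2$.

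For the alternative case, I would build a different $S_3\leq PGU(3,q)$ whose order-$3$ elements are elations (type (C), $i(\sigma)=q+2$) rather than of type (D). Such a group can be produced inside the stabilizer of an $\mathbb{F}_{q^2}$-rational point $P\in\cH_q$: the Sylow $3$-subgroup of this stabilizer contains elations with center $P$, and one exhibits an involution in its normalizer that inverts (by conjugation) a chosen elation of order $3$, giving the required $S_3$. Once the type of $\gamma$ is fixed, the Riemann--Hurwitz formula
\[
q^2-q-2 \;=\; 6(2\bar g-2)+3(q+1)+2\,i(\gamma)
\]
yields the claimed $\bar g$ after a direct calculation in each of the two regimes.

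The main obstacle I anticipate is the construction of the type-(C) version of $S_3$: in characteristic $3$, elations with a given center and axis commute within the corresponding abelian subgroup, so producing an involution that inverts a fixed elation without also commuting with it requires working outside the elation subgroup itself and carefully verifying the semidirect-product relation $\tau\gamma\tau=\gamma^{-1}$. Once this is in hand, distinguishing the types of $\gamma$ via the pointwise action on its fixed tangent line, and feeding the two possible values $i(\gamma)\in\{2,q+2\}$ into Riemann--Hurwitz, is routine.
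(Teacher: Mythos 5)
Your first construction --- the coordinate-permutation group $\langle\gamma,\tau\rangle$ on the Fermat model --- is sound and is exactly the group $G=K$ used in the second case of the paper's own proof; your type analysis (the three involutions are type-(A) homologies with $i=q+1$, while $\gamma,\gamma^2$ fix only $(1:1:1)\in\cH_q$ and do not fix the tangent $X+Y+Z=0$ pointwise, hence are of type (D) with $i(\gamma)=2$) is correct and more carefully justified than in the paper. The genuine gap is your second construction. An order-$3$ elation $\gamma$ with center $P\in\cH_q$ cannot be inverted by any involution: since ${\rm Fix}(\gamma)\cap\cH_q=\ell\cap\cH_q=\{P\}$, any $\tau$ normalizing $\langle\gamma\rangle$ fixes $P$; in the model $x^q+x=y^{q+1}$ the stabilizer of $P$ is $S_P\rtimes C_{q^2-1}$ and conjugation by $(x,y)\mapsto(a^{q+1}x,ay)$ sends the elation $(x,y)\mapsto(x+c,y)$ to $(x,y)\mapsto(x+a^{-(q+1)}c,y)$, so the involution $a=-1$ (and hence every involution, as the Sylow $2$-subgroups here are cyclic and $Z(S_P)$ is normal in the stabilizer) centralizes every elation with center $P$, and $\tau\gamma\tau=\gamma^{-1}$ would force $\gamma^2=1$. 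So the ``type-(C) version of $S_3$'' does not exist: for $p=3$ every $S_3\leq PGU(3,q)$ has its order-$3$ elements of type (D).

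Consequently your last step also fails as written: substituting $i(\gamma)=2$ into your (correctly set up) Riemann--Hurwitz identity gives $12\bar g=q^2-4q+3$, i.e. $\bar g=(q-1)(q-3)/12=\bigl((q+1)(q-5)+8\bigr)/12$, the unique attainable genus (it agrees with Table \ref{tabella15}: $q=27$, $g=52$); it equals neither displayed value, and indeed neither $\bigl((q+1)(q-5)+6\bigr)/12$ nor $\bigl((q+1)(q-5)+2\bigr)/12$ is an integer for $q=27$, so no argument can produce the stated dichotomy. Be aware that the paper's own proof splits instead on $3\mid(q+1)$ versus $3\mid(q-1)$ --- both vacuous when $q$ is a power of $3$ --- and in the first case uses an order-$3$ element of type (B1) inside the torus $C_{q+1}\times C_{q+1}$, which likewise does not exist in characteristic $3$; the statement and proof evidently carry typos from a $p\neq3$ version. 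The correct conclusion you should aim for is the single formula $\bar g=(q-1)(q-3)/12$.
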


\begin{proof}
Let $\cH_q$ have equation \eqref{M1}. Let $M\cong (C_{q+1}\times C_{q+1})\rtimes K$, with $K\cong S_3$, be the maximal subgroup of $PGU(3,q)$ fixing the fundamental triangle $T=\{P_1,P_2,P_3\}$.
Assume $3\mid(q+1)$.
Since $M$ contains a unique subgroup $C_3$ generated by an element $\alpha$ of type (B1), we have $G=\langle\alpha,\beta\rangle\cong S_3$, where $\beta$ is any involution of $M$.
Then from the Riemann-Hurwitz formula
$$ (q+1)(q-2)=6(2\bar g-2)+3(q+2). $$
Assume $3\mid(q-1)$ and take $G=K$.
Then from the Riemann-Hurwitz formula
$$ (q+1)(q-2)=6(2\bar g-2)+3(q+2)+2\cdot2. $$
\end{proof}

We provide a model for a quotient curve $\cH_q/ \langle \sigma \rangle$ where $\sigma$ is of type (E).

\begin{proposition}\label{EqTipoE}
For $q=p^h$ let $\sigma \in PGU(3,q)$ be of type (E) and of order $pd$. Choose $\lambda \in \overline{\mathbb{F}}_q$ such that $\lambda^h=-1$.
Then a plane model for the quotient curve $\cH_q / \langle \sigma \rangle$ is given by
\begin{equation} \label{tipoE}
y^{(q+1)/d}=\sum_{i=1}^{h} \lambda^{i-1} x^{q/p^i}.
\end{equation}
\end{proposition}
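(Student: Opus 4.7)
The idea is to realize $\sigma$ in a convenient coordinate system, identify explicit generators of the fixed subfield of $\mathbb F_{q^2}(\cH_q)$ under $\langle\sigma\rangle$, write down the defining equation they satisfy, and finally renormalize to bring it into the stated form.

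\textbf{Normal form for $\sigma$.} Since $\sigma$ has order $pd$ with $d\mid q+1$, the element $\sigma^p$ has order $d$ dividing $q+1$ and fixes both $P\in\cH_q$ and $Q\notin\cH_q$; by Lemma \ref{classificazione} it is therefore of type (A), i.e.\ a homology with center $Q$. Correspondingly $\sigma^d$ is a type (C) elation of order $p$ with center $P$, and the pole-polar relation forces $Q$ to lie on the tangent to $\cH_q$ at $P$. Up to conjugation in $PGU(3,q)$ we may take the model $\cH_q\colon y^{q+1}=x^q+x$ with $P=(1:0:0)$ at infinity and $Q=(0:1:0)$; analyzing the stabilizer of $P$ then gives
\[
\sigma\colon (x,y)\mapsto (x+\gamma,\,ay),
\]
where $\gamma\in\mathbb F_{q^2}^*$ satisfies $\gamma^q+\gamma=0$ (equivalently $\gamma^{q-1}=-1$) and $a\in\mathbb F_{q^2}^*$ satisfies $a^{q+1}=1$, $\mathrm{ord}(a)=d$.

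\textbf{Fixed field and defining equation.} Set $v:=y^d$ and $u:=x^p-\gamma^{p-1}x$. Then $\sigma(v)=a^dv=v$, and the classical Artin--Schreier computation shows $\sigma(u)=(x+\gamma)^p-\gamma^{p-1}(x+\gamma)=u$. Since $[\mathbb F_{q^2}(x,y):\mathbb F_{q^2}(u,v)]=p\cdot d=|\langle\sigma\rangle|$, the fixed subfield is exactly $\mathbb F_{q^2}(u,v)$, and this is a plane model of $\cH_q/\langle\sigma\rangle$. Solving $x^p=u+\gamma^{p-1}x$ and iterating, one proves by induction on $k$ that
\[
x^{p^k}=\gamma^{p^k-1}x+\sum_{j=1}^{k}\gamma^{p^k-p^j}u^{p^{j-1}}.
\]
Specializing to $k=h$ and using $\gamma^{q-1}=-1$ gives $x^q=-x+\sum_{j=1}^{h}\gamma^{q-p^j}u^{p^{j-1}}$, whence
\[
v^{(q+1)/d}=y^{q+1}=x^q+x=\sum_{j=1}^{h}\gamma^{q-p^j}\,u^{p^{j-1}}.\tag{$\ast$}
\]

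\textbf{Renormalization.} Given $\lambda\in\bar{\mathbb F}_q$ with $\lambda^h=-1$, I would look for $b,c\in\bar{\mathbb F}_q^*$ so that the substitution $u=cX$, $v=bY$ turns $(\ast)$ into $Y^{(q+1)/d}=\sum_{i=1}^{h}\lambda^{i-1}X^{q/p^i}$ (after reindexing $i=h-j+1$). The matching of the extreme terms forces $b^{(q+1)/d}=c^{q/p}$, and the remaining $h-1$ conditions reduce, after setting $\rho:=\gamma^p c$, to $\rho^{p^{h-1}-p^{j-1}}=\lambda^{h-j}$. These are precisely compatible with $\lambda^h=-1$: the extremal case $j=1$ gives $\rho^{q/p-1}=\lambda^{h-1}$, whose $h$-th power, together with $\gamma^{q-1}=-1$, recovers exactly the hypothesis $\lambda^h=-1$. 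Since $\bar{\mathbb F}_q$ is algebraically closed, an admissible pair $(b,c)$ exists, and $(\ast)$ is birational to the stated plane model.

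\textbf{Main obstacle.} The delicate point is the last paragraph: I must check carefully that the $h$ coefficient-matching equations arising from rescaling are simultaneously solvable by a single pair $(b,c)$ under the sole constraint $\lambda^h=-1$. The compatibility is the arithmetic shadow of the identity $\gamma^{q-1}=-1$ attached to the unitary structure of $\cH_q$, and verifying it for all $h$ (and not just $h=2$, where only one ratio condition appears) is the main computational task of the proof.
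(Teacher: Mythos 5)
Your setup coincides with the paper's own proof: the same normal form $\sigma\colon(x,y)\mapsto(x+\gamma,\,ay)$ on $y^{q+1}=x^q+x$, the same invariants $v=y^{d}$ and $u=x^{p}-\gamma^{p-1}x$, and the same degree count identifying $\overline{\mathbb{F}}_q(u,v)$ with the fixed field (the paper invokes Abhyankar's Lemma for the latter). Your inductive identity
\[
x^{q}+x=\sum_{j=1}^{h}\gamma^{\,q-p^{j}}\,u^{p^{j-1}}
\]
is correct. The problem is the final renormalization, which you yourself flag as unfinished: it is not a routine verification but a genuine gap, and in fact a diagonal substitution cannot produce \eqref{tipoE} in general. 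With your $\rho$ the conditions are $\rho^{\,p^{h-i}(p^{i-1}-1)}=\lambda^{i-1}$ for $i=2,\dots,h$. Taking $i=2$ gives $\lambda=\rho^{\,p^{h-2}(p-1)}$; substituting this into the condition for $i=3$ yields $\rho^{\,p^{h-3}(p-1)^{2}}=1$ and hence $\lambda^{p-1}=\bigl(\rho^{\,p^{h-3}(p-1)^{2}}\bigr)^{p}=1$. So for $h\geq 3$ the system is solvable only when $\lambda$ lies in the prime field, whereas the statement allows an arbitrary $\lambda\in\overline{\mathbb{F}}_q$ with $\lambda^{h}=-1$ (and an $h$-th root of $-1$ in $\mathbb{F}_p$ need not exist). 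The compatibility you hoped would be ``the arithmetic shadow of $\gamma^{q-1}=-1$'' does not materialize.

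The paper avoids the rescaling altogether by normalizing $\sigma$ the other way around: it fixes $\lambda$ first and chooses the translation constant $c$ with $c^{p}=\lambda c$, i.e.\ $\lambda=c^{\,p-1}$, so the invariant becomes $\eta=x^{p}-\lambda x$ and the identity $x^{q}+x=\sum_{i=1}^{h}\lambda^{i-1}\eta^{q/p^{i}}$ is read off from the factorization $x^{h}-\lambda^{h}=(x-\lambda)\sum_{i=1}^{h}\lambda^{i-1}x^{h-i}$ of conventional $p$-associates via \cite[Lemma 3.59]{LN}; the equation then comes out directly in the form \eqref{tipoE}. Be aware, though, that the multiplicativity of $p$-associates in \cite{LN} is stated for $p$-polynomials with coefficients in the prime field, so the same constraint $\lambda\in\mathbb{F}_p$ that obstructs your rescaling is implicitly present in the paper's step as well; your explicit coefficients $\gamma^{\,q-p^{j}}$ are the unconditional version of that identity. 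In any case, as submitted your argument is incomplete precisely at the step you identified, and that step cannot be completed by the route you propose.
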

\begin{proof}
Let $\overline{\mathbb{F}}_q(\cH_q)=\overline{\mathbb{F}}_q(x,y)$ with $y^{q+1}=x^q+x$. Up to conjugation $\sigma: (x,y) \mapsto (x+c, ay)$ where $a$ is a $d$-th primitive root of unity and $c^p=\lambda c$. Define $\xi=y^d$ and $\eta=x^p-\lambda x$. Clearly $\overline{\mathbb{F}}_q(\xi,\eta) \subseteq \overline{\mathbb{F}}_q(\cH_q)^{\langle \sigma \rangle}$, where $\overline{\mathbb{F}}_q(\cH_q)^{\langle \sigma \rangle}$ denotes the subfield of $\overline{\mathbb{F}}_q(\cH_q)$ which is fixed by $\langle \sigma \rangle$. Since $[\overline{\mathbb{F}}_q(y,\eta):\overline{\mathbb{F}}_q(\xi,\eta)]=d$ and $[\overline{\mathbb{F}}_q(\xi,x): \overline{\mathbb{F}}_q(\xi,\eta)]=p$, the compositum $\overline{\mathbb{F}}_q(x,y)$ has degree $dp$ over $\overline{\mathbb{F}}_q(\xi,\eta)$ by Abhyankar's Lemma \cite[Theorem 3.9.1]{Sti} and hence $\overline{\mathbb{F}}_q(\cH_q)^{\langle \sigma \rangle}=\overline{\mathbb{F}}_q(\xi,\eta)$. Consider the conventional $p$-associate polynomials $x-\lambda$ and $x^h-\lambda^h$ of $x^p-\lambda x$ and $x^{q}+x$; see \cite[Definition 3.58]{LN}. Since $(x^h-\lambda^h)/(x-\lambda)=\sum_{i=1}^{h} \lambda^{i-1} x^{h-i}$, we have by \cite[Lemma 3.59]{LN} that 
$$\xi^{(q+1)/d}=x^q+x=\sum_{i=1}^{h} \lambda^{i-1} \eta^{q/p^i}.$$
The equation $\xi^{(q+1)/d}=\sum_{i=1}^{h} \lambda^{i-1} \eta^{q/p^i}$ is absolutely irreducible as it defines the Kummer extension $\overline{\mathbb{F}}_q(\xi,\eta): \overline{\mathbb{F}}_q(\eta)$ of degree $(q+1)/d$.
\end{proof}

\begin{remark}
Let $\mathcal X$ be the quotient curve $\cH_q/\langle\sigma\rangle$ for some nontrivial $\sigma\in PGU(3,q)$, $q=p^h$.
Then one of the following cases occurs:
\begin{enumerate}
\item $o(\sigma)=p$, $g(\mathcal X)=p^{h-1}(q-p)/2$, and $$\mathcal X:\quad \sum_{i=1}^h y^{q/{p^i}}+\omega x^{q+1}=0,$$ with $\omega^{q-1}=-1$; see \cite[Theorem 2.1 (II) (1)]{CKT2}.
\item $p\geq3$, $o(\sigma)=p$, $g(\mathcal X)=p^{h-1}(q-1)/2$, and $$\mathcal X:\quad y^q+y=\bigg(\sum_{i=1}^h x^{q/{p^i}}\bigg)^2;$$
see \cite[Theorem 2.1 (II) (2)]{CKT2}.
\item $o(\sigma)=pd$ with $1<d\mid(q+1)$, $g(\mathcal X)=\left(\frac{q+1}{d}-1\right)\left(p^{h-1}-1\right)/2$, and $$\mathcal X:\quad y^{(q+1)/d}=\sum_{i=1}^{h} \lambda^{i-1} x^{q/p^i};$$
see \cite[Theorem 4.4]{GSX} and Proposition \ref{EqTipoE}.
\item $o(\sigma)=d\mid(q^2-q+1)$, $g(\mathcal X)=\frac{1}{2}\left(\frac{q^2-q+1}{d}-1\right)$, and $$\mathcal X:\quad F(x,y,1)=0,$$
where $F(X,Y,Z)$ is defined in \cite[Remark 5.5]{CKT1}; see \cite[Theorem 5.1]{CKT1}.
\item $o(\sigma)=d\mid(q^2-1)$, $g(\mathcal X)=\frac{1}{2d}[q+1-\gcd(d,q+1)](q-1)$, and
$$ \mathcal X:\quad y^{(q^2-1)/d}=x(x+1)^{q-1}; $$
see \cite[Corollary 4.9 and Example 6.3]{GSX}.
\item $o(\sigma)=d\mid(q+1)$, $\sigma$ is of type (B1), and $g(\mathcal X)=1+\frac{1}{2d}(q+1)(q+1-r_1-r_2-r_3)$ for some divisors $r_1,r_2,r_3$ of $q+1$; see \cite[Theorem 5.8]{GSX}.
\item $p=2$, $o(\sigma)=4$, $\sigma$ is of type (D), and $g(\mathcal X)=(q^2-2q)/8$; see \cite[Theorem 3.3]{GSX}.
\end{enumerate}
Partial results on models for $\mathcal X$ in Case (6) are given in \cite[Example 6.4]{GSX} and \cite[Equation (3.1)]{GHKT}; yet, a model of $\mathcal X$ is still unknown for a general $\sigma$ of type (B1).
In case (7), a model for $\mathcal X$ is unknown.
\end{remark}

\section{Spectrum of genera of quotients of $\cH_q$, $q \leq 29$}

In this section the complete spectrum $\Sigma_q$ of genera of $\mathbb{F}_{q^2}$-maximal curves which are Galois subcovers of the Hermitian curve $\cH_q$ is determined for all $q \leq 29$.

The proof relies on the results of \cite{MZRS}.  A case-by-case analysis of all integers $g$ with $1<g\leq g(\cH_{q})$ is combined with the classical bounds
\begin{equation}\label{limitiamo} \frac{|\cH_{q}(\F_{q^2})|}{|\cH_{q}/G(\F_{q^2})|} \leq|G|\leq \frac{2g(\cH_{q})-2}{2g(\cH_{q}/G)-2} \,, \end{equation}
This leads us to
look inside the structure of the groups $G$ satisfying \eqref{limitiamo} and compute the genus of $\cH_{q}/G$, for $g>1$.
For each $g>1$ in $\Sigma_{q}$, the following tables provide a classification of the groups $G$ for which $\cH_{q}/G$ has genus $g$.

\begin{center}
\begin{table}[htbp]
\begin{small}
\caption{Quotient curves $\cH_{2}/G$, \ $G \leq \PGU(3,2)$}\label{tabella1}
\begin{tabular}{|c|c|c|}
\hline \bf $g$ & $|G|$ & \textrm{structure of $G$} \\
\hline\hline 1 & 1 & \textrm{trivial group.}\\
\hline 0 & 2 & $G=C_2=\langle\sigma\rangle$, $\sigma$ of type (C).\\
\hline
\end{tabular}
\end{small}
\end{table}
\end{center}
\begin{center}
\begin{table}[htbp]
\begin{small}
\caption{Quotient curves $\cH_{3}/G$, \ $G \leq \PGU(3,3)$}\label{tabella2}
\begin{tabular}{|c|c|c|}
\hline \bf $g$ & $|G|$ & \textrm{structure of $G$} \\
\hline\hline 3 & 1 & \textrm{trivial group.}\\
\hline 1 & 2 & $G=C_2=\langle\sigma\rangle$, $\sigma$ of type (A).\\
\hline 0 & 3 & $G=C_3=\langle\sigma\rangle$, $\sigma$ of type (C).\\
\hline
\end{tabular}
\end{small}
\end{table}
\end{center}

\begin{center}
\begin{table}[htbp]
\begin{small}
\caption{Quotient curves $\cH_{4}/G$, \ $G \leq \PGU(3,4)$}\label{tabella3}
\begin{tabular}{|c|c|c|}
\hline \bf $g$ & $|G|$ & \textrm{structure of $G$} \\
\hline\hline 6 & 1 & \textrm{trivial group.}\\
\hline 2 & 3 & $G=C_3=\langle\sigma\rangle$, $\sigma$ of type (B2).\\
\hline 1 & 4 & $G=C_4=\langle\sigma\rangle$, $\sigma$ of type (D).\\
\hline 0 & 5 & $G=C_5=\langle\sigma\rangle$, $\sigma$ of type (A).\\
\hline
\end{tabular}
\end{small}
\end{table}
\end{center}

\begin{center}
\begin{table}[htbp]
\begin{small}
\caption{Quotient curves $\cH_{5}/G$, \ $G \leq \PGU(3,5)$}\label{tabella4}
\begin{tabular}{|c|c|c|}
\hline \bf $g$ & $|G|$ & \textrm{structure of $G$} \\
\hline\hline 10 & 1 & \textrm{trivial group.}\\
\hline 4 & 2 & $G=C_2=\langle\sigma\rangle$, $\sigma$ of type (A).\\
\hline 3 & 3 & $G=C_3=\langle\sigma\rangle$, $\sigma$ of type (B3).\\
\hline 2 & 4 & $G=C_4=\langle\sigma\rangle$, $\sigma$ of type (B2).\\
\hline 1 & 3 & $G=C_3=\langle\sigma\rangle$, $\sigma$ of type (A).\\
\hline 0 & 5 & $G=C_5=\langle\sigma\rangle$, $\sigma$ of type (C).\\
\hline
\end{tabular}
\end{small}
\end{table}
\end{center}

\begin{center}
\begin{table}[htbp]
\begin{small}
\caption{Quotient curves $\cH_{7}/G$, \ $G \leq \PGU(3,7)$}\label{tabella5}
\begin{tabular}{|c|c|c|}
\hline \bf $g$ & $|G|$ & \textrm{structure of $G$} \\
\hline\hline 21 & 1 & \textrm{trivial group.}\\
\hline 9 & 2 & $G=C_2=\langle\sigma\rangle$, $\sigma$ of type (A).\\
\hline 7 & 3 & $G=C_3=\langle\sigma\rangle$, $\sigma$ of type (B2).\\
\hline 5 & 4 & $G=C_4=\langle\sigma\rangle$, $\sigma$ of type (B1).\\
\hline 3 & 4 & $G=C_4=\langle\sigma\rangle$, $\sigma$ of type (A).\\
\hline 2 & 6 & $G=Sym(3)=\langle\alpha\rangle\rtimes\langle\beta\rangle$, $\alpha$ of type (B2), $\beta$ of type (A). \\
\hline 1 & 8 & $G=C_4 \times C_2=\langle\alpha \rangle \times \langle \beta \rangle$, $\alpha$ of type (A), $\beta$ of type (A).\\
\hline 0 & 7 & $G= C_7=\langle\sigma\rangle$, $\sigma$ of type (C). \\
\hline
\end{tabular}
\end{small}
\end{table}
\end{center}


\begin{center}
\begin{table}[htbp]
\begin{small}
\caption{Quotient curves $\cH_{8}/G$, \ $G \leq \PGU(3,8)$}\label{tabella6}
\begin{tabular}{|c|c|c|}
\hline \bf $g$ & $|G|$ & \textrm{structure of $G$} \\
\hline\hline 28 & 1 & \textrm{trivial group.}\\
\hline 12 & 2 & $G=C_2=\langle\sigma\rangle$, $\sigma$ of type (C).\\
\hline 10 & 3 & $G=C_3=\langle\sigma\rangle$, $\sigma$ of type (B1).\\
\hline 9 & 3 & $G=C_3=\langle\sigma\rangle$, $\sigma$ of type (B3).\\
\hline 7 & 3 & $G=C_3=\langle\sigma\rangle$, $\sigma$ of type (A).\\
\hline 6 & 4 & $G=C_4=\langle\sigma\rangle$, $\sigma$ of type (D).\\
\hline 4 & 7 & $G=C_7=\langle\sigma\rangle$, $\sigma$ of type (B2). \\
\hline 3 & 6 & $G=C_6=\langle\sigma\rangle$, $\sigma$ of type (E).\\
\hline 2 & 8 & $G= C_4 \times C_2=\langle\alpha \rangle \times \langle \beta \rangle$, $\alpha$ of type (D), $\beta$ of type $(C)$. \\
\hline 1 & 19 & $G=C_19=\langle\sigma\rangle$, $\sigma$ of type (B3).\\
\hline 0 & 9 & $G=C_9=\langle\sigma\rangle$, $\sigma$ of type (A).\\
\hline
\end{tabular}
\end{small}
\end{table}
\end{center}

\begin{center}
\begin{table}[htbp]
\begin{small}
\caption{Quotient curves $\cH_{9}/G$, \ $G \leq \PGU(3,9)$}\label{tabella7}
\begin{tabular}{|c|c|c|}
\hline \bf $g$ & $|G|$ & \textrm{structure of $G$} \\
\hline\hline 36 & 1 & \textrm{trivial group.}\\
\hline 16 & 2 & $G=C_2=\langle\sigma\rangle$, $\sigma$ of type (A).\\
\hline 12 & 3 & $G=C_3=\langle\sigma\rangle$, $\sigma$ of type (D).\\
\hline 9 & 3 & $G=C_3=\langle\sigma\rangle$, $\sigma$ of type (C).\\
\hline 8 & 4 & $G=C_4=\langle\sigma\rangle$, $\sigma$ of type (B2).\\
\hline 6 & 4 & $G=C_2 \times C_2=\langle\alpha\rangle \times \langle \beta \rangle$, $\alpha$ of type (A), $\beta$ of type (A).\\
\hline 4 & 5 & $G=C_5=\langle\sigma\rangle$, $\sigma$ of type (A). \\
\hline 3 & 9 & $G=C_3 \times C_3=\langle\alpha\rangle \times \langle \beta \rangle$, $\alpha$ of type (C), $\beta$ of type (D).\\
\hline 2 & 8 & $G= D_8=C_4 \rtimes C_2=\langle\alpha \rangle \rtimes \langle \beta \rangle$, $\alpha$ of type (B2), $\beta$ of type $(A)$. \\
\hline 1 & 15 & $G=C_{15}=\langle\sigma\rangle$, $\sigma$ of type (E).\\
\hline 0 & 10 & $G=C_{10}=\langle\sigma\rangle$, $\sigma$ of type (A).\\
\hline
\end{tabular}
\end{small}
\end{table}
\end{center}

\begin{center}
\begin{table}[htbp]
\begin{small}
\caption{Quotient curves $\cH_{11}/G$, \ $G \leq \PGU(3,11)$}\label{tabella8}
\begin{tabular}{|c|c|c|}
\hline \bf $g$ & $|G|$ & \textrm{structure of $G$} \\
\hline\hline 55 & 1 & \textrm{trivial group.}\\
\hline 25 & 2 & $G=C_2=\langle\sigma\rangle$, $\sigma$ of type (A).\\
\hline 19 & 3 & $G=C_3=\langle\sigma\rangle$, $\sigma$ of type (B1).\\
\hline 18 & 3 & $G=C_3=\langle\sigma\rangle$, $\sigma$ of type (B3).\\
\hline 15 & 3 & $G=C_3=\langle\sigma\rangle$, $\sigma$ of type (A).\\
\hline 13 & 4 & $G=C_4=\langle\sigma\rangle$, $\sigma$ of type (B1).\\
\hline 11 & 5 & $G=C_5=\langle\sigma\rangle$, $\sigma$ of type (B2).\\
\hline 10 & 4 & $G=C_4=\langle\sigma\rangle$, $\sigma$ of type (A).\\
\hline 9 & 6 & $G=C_6=\langle\sigma\rangle$, $\sigma$ of type (B1).\\
\hline 7 & 8 & $G=Q_8$.\\
\hline 5 & 6 & $G=C_6=\langle\sigma\rangle$, $\sigma$ of type (A).\\
\hline 4 & 8 & $G=D_8=C_4 \rtimes C_2=\langle\alpha\rangle \rtimes \langle \beta \rangle$, $\alpha$ of type (B1), $\beta$ of type (A). \\
\hline 3 & 10 & $D_{10}=C_5 \rtimes C_2=\langle\alpha\rangle \rtimes \langle \beta \rangle$, $\alpha$ of type (B2), $\beta$ of type (A).\\
\hline 2 & 15 & $G=C_{15}=\langle\sigma\rangle$, $\sigma$ of type (B2). \\
\hline 1 & 37 & $G=C_{37}=\langle\sigma\rangle$, $\sigma$ of type (B3).\\
\hline 0 & 12 & $G=C_{12}=\langle\sigma\rangle$, $\sigma$ of type (A).\\
\hline
\end{tabular}
\end{small}
\end{table}
\end{center}

\begin{center}
\begin{table}[htbp]
\begin{small}
\caption{Quotient curves $\cH_{13}/G$, \ $G \leq \PGU(3,13)$}\label{tabella9}
\begin{tabular}{|c|c|c|}
\hline \bf $g$ & $|G|$ & \textrm{structure of $G$} \\
\hline\hline 78 & 1 & \textrm{trivial group.}\\
\hline 36 & 2 & $G=C_2=\langle\sigma\rangle$, $\sigma$ of type (A).\\
\hline 26 & 3 & $G=C_3=\langle\sigma\rangle$, $\sigma$ of type (B2).\\
\hline 18 & 4 & $G=C_4=\langle\sigma\rangle$, $\sigma$ of type (B2).\\
\hline 15 & 4 & $G=C_2 \times C_2=\langle\alpha\rangle \times \langle \beta \rangle$, $\alpha$ of type (A), $\beta$ of type (A).\\
\hline 12 & 6 & $G=C_6=\langle\sigma\rangle$, $\sigma$ of type (B2).\\
\hline 10 & 6 & $G=S_3$.\\
\hline 9 & 8 & $G=Q_8$.\\
\hline 6 & 8 & $G=D_8=C_4 \rtimes C_2=\langle\alpha\rangle \rtimes \langle \beta \rangle$, $\alpha$ of type (B2), $\beta$ of type (A).\\
\hline 5 & 12 & $G=A_4$.\\
\hline 4 & 21 & $G=C_7 \rtimes C_3=\langle\alpha\rangle \rtimes \langle \beta \rangle$, $\alpha$ of type (B1), $\beta$ of type (B2).\\
\hline 3 & 14 & $G=D_{14}=C_7 \rtimes C_2=\langle\alpha\rangle \rtimes \langle \beta \rangle$, $\alpha$ of type (B1), $\beta$ of type (A). \\
\hline 2 & 21 & $G=C_{21}=\langle\sigma\rangle$, $\sigma$ of type (B2). \\
\hline 0 & 14 & $G=C_{14}=\langle\sigma\rangle$, $\sigma$ of type (A).\\
\hline
\end{tabular}
\end{small}
\end{table}
\end{center}

\begin{center}
\begin{table}[htbp]
\begin{small}
\caption{Quotient curves $\cH_{16}/G$, \ $G \leq \PGU(3,16)$}\label{tabella10}
\begin{tabular}{|c|c|c|}
\hline \bf $g$ & $|G|$ & \textrm{structure of $G$} \\
\hline\hline 120 & 1 & \textrm{trivial group.}\\
\hline 56 & 2 & $G=C_2=\langle\sigma\rangle$, $\sigma$ of type (C).\\
\hline 40 & 3 & $G=C_3=\langle\sigma\rangle$, $\sigma$ of type (B2).\\
\hline 28 & 4 & $G=C_4=\langle\sigma\rangle$, $\sigma$ of type (D).\\
\hline 24 & 4 & $G=C_2 \times C_2=\langle\alpha\rangle \times \langle \beta \rangle$, $\alpha$ of type (A), $\beta$ of type (A).\\
\hline 16 & 6 & $G=Sym(3)=C_3 \rtimes C_2=\langle\alpha\rangle \rtimes \langle \beta \rangle$, $\alpha$ of type (B2) and $\beta$ of type (C).\\
\hline 12 & 8 & $G=C_4 \times C_2=\langle\alpha\rangle \times \langle \beta \rangle$, $\alpha$ of type (D) and $\beta$ of type (C).\\
\hline 8 & 8 & $G=C_2 \times C_2 \times C_2$.\\
\hline 6 & 16 & $G=C_4 \times C_4=\langle\alpha\rangle \times \langle \beta \rangle$, $\alpha$ of type (D), $\beta$ of type (D).\\
\hline 4 & 16 & $G=C_2 \times C_2 \times C_4$.\\
\hline 2 & 32 & $G=C_{2} \times C_4 \times C_4$. \\
\hline 1 & 64 & $G=C_4 \times C_4 \times C_4$. \\
\hline 0 & 17 & $G=C_{17}=\langle\sigma\rangle$, $\sigma$ of type (A).\\
\hline
\end{tabular}
\end{small}
\end{table}
\end{center}


\begin{center}
\begin{table}[htbp]
\begin{small}
\caption{Quotient curves $\cH_{17}/G$, \ $G \leq \PGU(3,17)$}\label{tabella17}
\begin{tabular}{|c|c|c|}
\hline \bf $g$ & $|G|$ & \textrm{structure of $G$} \\
\hline\hline 136 & 1 & \textrm{trivial group.}\\
\hline 64 & 2 & $G=C_2=\langle\sigma\rangle$, $\sigma$ of type (A).\\
\hline 46 & 3 & $G=C_3=\langle\sigma\rangle$, $\sigma$ of type (B1).\\
\hline 45 & 3 & $G=C_3=\langle\sigma\rangle$, $\sigma$ of type (B3).\\
\hline 40 & 3 & $G=C_3=\langle\sigma\rangle$, $\sigma$ of type (A).\\
\hline 32 & 4 & $G=C_4=\langle\sigma\rangle$, $\sigma$ of type (B2).\\
\hline 28 & 4 & $G=C_2 \times C_2=\langle\alpha\rangle \times \langle \beta \rangle$, $\alpha$ of type (A), $\beta$ of type (A).\\
\hline 22 & 6 & $G=C_6=\langle\sigma\rangle$, $\sigma$ of type (B1).\\
\hline 19 & 6 & $G=C_6=\langle\sigma\rangle$, $\sigma$ of type (B1).\\
\hline 16 & 6 & $G=C_6=\langle\sigma\rangle$, $\sigma$ of type (A).\\
\hline 14 & 9 & $G=C_9=\langle\sigma\rangle$, $\sigma$ of type (B1).\\
\hline 12 & 8 & $G=D_8=C_4 \rtimes C_2=\langle\alpha\rangle \rtimes \langle \beta \rangle$, $\alpha$ of type (B2), $\beta$ of type (A).\\
\hline 11 & 8 & $G=Dic_{12}$.\\
\hline 10 & 12 & $G=A_4$.\\
\hline 8 & 9 & $G=C_9=\langle\sigma\rangle$, $\sigma$ of type (A).\\
\hline 7 & 6 & $G=C_6=\langle\sigma\rangle$, $\sigma$ of type (A).\\
\hline 6 & 18 & $G=C_{18}=\langle\sigma\rangle$, $\sigma$ of type (B1).\\
\hline 5 & 18 & $G=C_6 \times C_3=\langle\alpha\rangle \times \langle \beta \rangle$, $\alpha$ of type (A), $\beta$ of type (B1).\\
\hline 4 & 18 & $G=C_6 \times C_3=\langle\alpha\rangle \times \langle \beta \rangle$, $\alpha$ of type (A), $\beta$ of type (A).\\
\hline 3 & 18 & $G=Sym(3) \times C_3$.\\
\hline 2 & 18 & $G=C_{3} \times C_3 \rtimes C_2 = \langle \alpha \rangle \times \langle \beta \rangle \times \langle \gamma \rangle$, $\alpha$, $\beta$, $\gamma$ of type (A). \\
\hline 1 & 91 & $G=C_{91}=\langle\sigma\rangle$, $\sigma$ of type (B3). \\
\hline 0 & 18 & $G=C_{18}=\langle\sigma\rangle$, $\sigma$ of type (A).\\
\hline
\end{tabular}
\end{small}
\end{table}
\end{center}


\begin{center}
\begin{table}[htbp]
\begin{small}
\caption{Quotient curves $\cH_{19}/G$, \ $G \leq \PGU(3,19)$}\label{tabella12}
\begin{tabular}{|c|c|c|}
\hline \bf $g$ & $|G|$ & \textrm{structure of $G$} \\
\hline\hline 171 & 1 & \textrm{trivial group.}\\
\hline 81 & 2 & $G=C_2=\langle\sigma\rangle$, $\sigma$ of type (A).\\
\hline 57 & 3 & $G=C_3=\langle\sigma\rangle$, $\sigma$ of type (B2).\\
\hline 41 & 4 & $G=C_4=\langle\sigma\rangle$, $\sigma$ of type (B1).\\
\hline 36 & 4 & $G=C_4=\langle\sigma\rangle$, $\sigma$ of type (A).\\
\hline 35 & 5 & $G=C_5=\langle\sigma\rangle$, $\sigma$ of type (B1).\\
\hline 27 & 4 & $G=C_4=\langle\sigma\rangle$, $\sigma$ of type (A).\\
\hline 24 & 6 & $G=Sym(3)=C_3 \rtimes C_2=\langle \alpha \rangle \rtimes \langle \beta \rangle$, $\alpha$ of type (B2), $\beta$ of type (A).\\
\hline 21 & 6 & $G=Q_8$.\\
\hline 19 & 9 & $G=C_9=\langle\sigma\rangle$, $\sigma$ of type (B2).\\
\hline 18 & 8 & $G=C_8=\langle\sigma\rangle$, $\sigma$ of type (B2).\\
\hline 17 & 10 & $G=C_{10}=\langle\sigma\rangle$, $\sigma$ of type (B1).\\
\hline 16 & 8 & $G=D_8=C_4 \rtimes C_2=\langle\alpha\rangle \rtimes \langle \beta \rangle$, $\alpha$ of type (B1), $\beta$ of type (A).\\
\hline 14 & 12 & $G=Dic_{12}$.\\
\hline 13 & 10 & $G=C_{10}=\langle\sigma\rangle$, $\sigma$ of type (B1).\\
\hline 12 & 12 & $G=C_{12}=\langle\sigma\rangle$, $\sigma$ of type (B2).\\
\hline 9 & 10 & $G=C_{10}=\langle\sigma\rangle$, $\sigma$ of type (A).\\
\hline 8 & 21 & $G=C_7 \rtimes C_3=\langle\alpha\rangle \rtimes \langle \beta \rangle$, $\alpha$ of type (B3), $\beta$ of type (B2).\\
\hline 7 & 24 & $G=SL(2,3)$.\\
\hline 6 & 24 & $G=C_3 \rtimes C_8=\langle\alpha\rangle \rtimes \langle \beta \rangle$, $\alpha$ of type (B2), $\beta$ of type (B2).\\
\hline 5 & 18 & $G=D_9=C_9 \rtimes C_2=\langle\alpha\rangle \rtimes \langle \beta \rangle$, $\alpha$ of type (B2), $\beta$ of type (A).\\
\hline 4 & 24 & $G=Sym(3)\times C_4$.\\
\hline 3 & 30 & $G=C_{30}=\langle\sigma\rangle$, $\sigma$ of type (B2).\\
\hline 2 & 18 & $G=SG(32,11)$. \\
\hline 1 & 141 & $G=C_{49} \rtimes C_3=\langle\alpha\rangle \rtimes \langle \beta \rangle$, $\alpha$ of type (B3), $\beta$ of type (B2). \\
\hline 0 & 20 & $G=C_{20}=\langle\sigma\rangle$, $\sigma$ of type (A).\\
\hline
\end{tabular}
\end{small}
\end{table}
\end{center}

\begin{center}
\begin{table}[htbp]
\begin{small}
\caption{Quotient curves $\cH_{23}/G$, \ $G \leq \PGU(3,23)$}\label{tabella13}
\begin{tabular}{|c|c|c|}
\hline \bf $g$ & $|G|$ & \textrm{structure of $G$} \\
\hline\hline 253 & 1 & \textrm{trivial group.}\\
\hline 121 & 2 & $G=C_2=\langle\sigma\rangle$, $\sigma$ of type (A).\\

\hline 85 & 3 & $G=C_3=\langle\sigma\rangle$, $\sigma$ of type (B1).\\
\hline 84 & 3 & $G=C_3=\langle\sigma\rangle$, $\sigma$ of type (B3).\\
\hline 77 & 3 & $G=C_3=\langle\sigma\rangle$, $\sigma$ of type (A).\\

\hline 61 & 4 & $G=C_4=\langle\sigma\rangle$, $\sigma$ of type (B1).\\
\hline 55 & 4 & $G=C_4=\langle\sigma\rangle$, $\sigma$ of type (A).\\

\hline 41 & 6 & $G=C_6=\langle\sigma\rangle$, $\sigma$ of type (B1).\\
\hline 37 & 6 & $G=Sym(3)=C_3 \rtimes C_2=\langle \alpha \rangle \rtimes \langle \beta \rangle$, $\alpha$ of type (B1), $\beta$ of type (A).\\
\hline 33 & 6 & $G=C_6=\langle\sigma\rangle$, $\sigma$ of type (A).\\

\hline 31 & 8 & $G=Q_8$.\\
\hline 28 & 8 & $G=C_8=\langle\sigma\rangle$, $\sigma$ of type (B1).\\
\hline 25 & 8 & $G=C_4 \times C_2=\langle\alpha\rangle \times \langle \beta \rangle$, $\alpha$ of type (A), $\beta$ of type (A).\\
\hline 23 & 11 & $G=C_{11}=\langle\sigma\rangle$, $\sigma$ of type (B2).\\
\hline 22 & 8 & $G=C_8=\langle\sigma\rangle$, $\sigma$ of type (A).\\

\hline 21 & 12 & $G=C_{12}=\langle\sigma\rangle$, $\sigma$ of type (B1).\\
\hline 19 & 12 & $G=C_{12}=\langle\sigma\rangle$, $\sigma$ of type (B1).\\
\hline 17 & 12 & $G=C_{12}=\langle\sigma\rangle$, $\sigma$ of type (B1).\\
\hline 16 & 16 & $G=Q_{16}$.\\
\hline 15 & 12 & $G=C_6 \times C_2=\langle\alpha\rangle \times \langle \beta \rangle$, $\alpha$ of type (A), $\beta$ of type (A).\\
\hline 13 & 16 & $G=C_8 \times C_2=\langle\alpha\rangle \times \langle \beta \rangle$, $\alpha$ of type (B1), $\beta$ of type (A).\\
\hline 11 & 12 & $G=C_{12}=\langle\sigma\rangle$, $\sigma$ of type (A).\\
\hline 10 & 16 & $G=C_4 \times C_4=\langle\alpha\rangle \times \langle \beta \rangle$, $\alpha$ of type (A), $\beta$ of type (A).\\
\hline 9 & 18 & $G=C_6 \times C_3=\langle\alpha\rangle \times \langle \beta \rangle$, $\alpha$ of type (A), $\beta$ of type (A).\\
\hline 8 & 24 & $G=C_{24}=\langle\sigma\rangle$, $\sigma$ of type (B1).\\
\hline 7 & 33 & $G=C_{33}=\langle\sigma\rangle$, $\sigma$ of type (B2).\\
\hline 6 & 22 & $G=C_{11} \rtimes C_2=\langle\alpha\rangle \times \langle \beta \rangle$, $\alpha$ of type (B2), $\beta$ of type (A).\\
\hline 5 & 18 & $G=C_3 \times C_3 \rtimes C_2=\langle\alpha\rangle \times \langle \beta \rangle \rtimes \langle \gamma \rangle$, $\alpha$, $\beta$ of type (A), $\gamma$ of type (A) .\\
\hline 4 & 32 & $G=C_{8} \times C_4=\langle\alpha\rangle \times \langle \beta \rangle$, $\alpha$ of type (A), $\beta$ of type (A).\\
\hline 3 & 24 & $G=D_8 \times C_3=(C_4 \rtimes C_2) \times C_3=(\langle\alpha\rangle \rtimes \langle \beta \rangle) \times \langle \gamma \rangle$, $\alpha$ of type (B1), $\beta$, $\gamma$ of type (A).\\
\hline 2 & 88 & $G=C_{88}=\langle\sigma\rangle$, $\sigma$ of type (B2).\\
\hline 1 & 169 & $G=C_{169}=\langle\sigma\rangle$, $\sigma$ of type (B3).\\
\hline 0 & 24 & $G=C_{24}=\langle\sigma\rangle$, $\sigma$ of type (A).\\
\hline
\end{tabular}
\end{small}
\end{table}
\end{center}

\begin{center}
\begin{table}[htbp]
\begin{small}
\caption{Quotient curves $\cH_{25}/G$, \ $G \leq \PGU(3,25)$}\label{tabella14}
\begin{tabular}{|c|c|c|}
\hline \bf $g$ & $|G|$ & \textrm{structure of $G$} \\
\hline\hline 300 & 1 & \textrm{trivial group.}\\
\hline 144 & 2 & $G=C_2=\langle\sigma\rangle$, $\sigma$ of type (A).\\

\hline 100 & 3 & $G=C_3=\langle\sigma\rangle$, $\sigma$ of type (B2).\\

\hline 72 & 4 & $G=C_4=\langle\sigma\rangle$, $\sigma$ of type (B2).\\
\hline 66 & 4 & $G=C_2 \times C_2=\langle\alpha\rangle \times \langle \beta \rangle$, $\alpha$ of type (A), $\beta$ of type (A).\\

\hline 60 & 5 & $G=C_5=\langle\sigma\rangle$, $\sigma$ of type (D).\\
\hline 50 & 5 & $G=C_5=\langle\sigma\rangle$, $\sigma$ of type (C).\\

\hline 48 & 6 & $G=C_6=\langle\sigma\rangle$, $\sigma$ of type (B2).\\
\hline 44 & 6 & $G=Sym(3)=C_3 \rtimes C_2=\langle \alpha \rangle \rtimes \langle \beta \rangle$, $\alpha$ of type (B2), $\beta$ of type (A).\\

\hline 36 & 8 & $G=Q_8$.\\
\hline 30 & 8 & $G=D_8=C_4 \rtimes C_2=\langle\alpha\rangle \rtimes \langle \beta \rangle$, $\alpha$ of type (B2), $\beta$ of type (A).\\

\hline 24 & 10 & $G=C_{10}=\langle\sigma\rangle$, $\sigma$ of type (E).\\

\hline 22 & 12 & $G=C_6 \times C_2=\langle\alpha\rangle \times \langle \beta \rangle$, $\alpha$ of type (B2), $\beta$ of type (A).\\
\hline 18 & 12 & $G=D_{12}=C_6 \rtimes C_2=\langle\alpha\rangle \rtimes \langle \beta \rangle$, $\alpha$ of type (B2), $\beta$ of type (A).\\

\hline 12 & 13 & $G=C_{13}=\langle\sigma\rangle$, $\sigma$ of type (A).\\
\hline 10 & 25 & $G=C_{5} \times C_5=\langle\alpha\rangle \times \langle \beta \rangle$, $\alpha$ of type (C), $\beta$ of type (D).\\

\hline 8 & 39 & $G=C_{13} \rtimes C_3=\langle\alpha\rangle \rtimes \langle \beta \rangle$, $\alpha$ of type (B1), $\beta$ of type (B2).\\

\hline 6 & 24 & $G=D_{24}=C_{12} \rtimes C_2=\langle\alpha\rangle \rtimes \langle \beta \rangle$, $\alpha$ of type (B2), $\beta$ of type (A).\\

\hline 4 & 39 & $G=C_{39}=\langle\sigma\rangle$, $\sigma$ of type (B2).\\

\hline 3 & 52 & $G=C_{13} \rtimes C_4=\langle\alpha\rangle \rtimes \langle \beta \rangle$, $\alpha$ of type (B1), $\beta$ of type (B2).\\

\hline 2 & 125 & $G=C_{5} \times C_5 \times C_5=\langle\alpha\rangle \times \langle \beta \rangle \times \langle \gamma \rangle$, $\alpha$ of type (C), $\beta$, $\gamma$ of type (D).\\

\hline 0 & 26 & $G=C_{26}=\langle\sigma\rangle$, $\sigma$ of type (A).\\
\hline
\end{tabular}
\end{small}
\end{table}
\end{center}

\begin{center}
\begin{table}[htbp]
\begin{small}
\caption{Quotient curves $\cH_{27}/G$, $G \leq \PGU(3,27)$ }\label{tabella15}
\begin{tabular}{|c|c|c|}
\hline \bf $g$ & $|G|$ & \textrm{structure of $G$} \\
\hline\hline 351 & 1 & \textrm{trivial group.}\\
\hline 169 & 2 & $G=C_2=\langle\sigma\rangle$, $\sigma$ of type (A).\\
\hline 117 & 3 & $G=C_3=\langle\sigma\rangle$, $\sigma$ of type (D).\\
\hline 108 & 3 & $G=C_3=\langle\sigma\rangle$, $\sigma$ of type (C).\\
\hline 85 & 4 & $G=C_4=\langle\sigma\rangle$, $\sigma$ of type (B1).\\
\hline 78 & 4 & $G=C_4=\langle\sigma\rangle$, $\sigma$ of type (A).\\
\hline 52 & 6 & $G=C_6=\langle\sigma\rangle$, $\sigma$ of type (E).\\
  &  & $G=Sym(3)=\langle\alpha\rangle\rtimes\langle\beta\rangle$, $\alpha$ of type (C), $\beta$ of type (A).\\
\hline 51 & 7 & $G=C_7=\langle\sigma\rangle$, $\sigma$ of type (B1).\\
\hline 43 & 8 & $G= Q_8$ quaternion group, 1 element of type (A), 6 elements of type (B1).\\
\hline 39 & 7 & $G=C_7=\langle\sigma\rangle$, $\sigma$ of type (A).\\
\hline 27 & 9 & $G=C_3\times C_3=\langle\alpha\rangle\times\langle\beta\rangle$, $\alpha$ and $\beta$ of type (C).\\
\hline 26 & 12 & $G= Alt(4)$, involutions of type (A), other elements of type (D).\\
\hline 25 & 14 & $G=C_{14}=\langle\sigma\rangle$, $\sigma$ of type (B1).\\
\hline 24 & 12 & $G=C_{12}=\langle\sigma\rangle$, $\sigma$ of type (E).\\
\hline 19 & 14 & $G=C_{14}=\langle\sigma\rangle$, $\sigma$ of type (B1), $\sigma^2$ of type (A).\\
\hline 18 & 16 & $G= M_{16}$, 5 elements of type (A),\\ & & 2 elements of type (B1), 8 elements of type (B2).\\
\hline 18 & 19 & $G=C_{19}=\langle\sigma\rangle$, $\sigma$ of type (B3).\\
\hline 17 & 21 & $G=C_7\rtimes C_3=\langle\alpha\rangle\rtimes\langle\beta\rangle$, $\alpha$ of type (B1), $\beta$ of type (B2).\\
\hline 16 & 18 & $G=C_3\times (C_3\rtimes C_2)=\langle\alpha\rangle\times(\langle\beta\rangle\rtimes\langle\gamma\rangle)$,\\ & & $\alpha$ of type (C), $\beta$ of type (D), $\gamma$ of type (A).\\
\hline 15  & 16 & $G=D_8\circ C_4=(\langle\alpha\rangle\rtimes\langle\beta\rangle)\circ\langle\gamma\rangle$, $\alpha$ of type (B1), $\beta$ and $\gamma$ of type (A).\\
\hline 13 & 14 & $G=C_{14}=\langle\sigma\rangle$, $\sigma$ of type (A).\\
\hline 12 & 21 & $G=C_{21}=\langle\sigma\rangle$, $\sigma$ of type (E).\\
\hline 10 & 24 & $G\cong{\rm SL}(2,3)$, 1 element of type (A), 6 elements of type (B1),\\ & & 8 elements of type (C), 8 elements of type (E).\\
\hline 9 & 37 & $G=C_{37}=\langle\sigma\rangle$, $\sigma$ of type (B3).\\
\hline 7 & 26 & $G=C_{13}\rtimes C_2=\langle\alpha\rangle\rtimes\langle\beta\rangle$, $\alpha$ of type (B2), $\beta$ of type (A).\\
\hline 6 & 32 & $G=C_4\wr C_2 = \langle\alpha\rangle\wr\langle\beta\rangle$ wreath product,\\ & & 13 elements of type (A), 10 elements of type (B1), 8 elements of type (B2).\\
\hline 5 & 48 & $G=(C_4\times C_4)\rtimes C_3=(\la\alpha\ra\T\la\beta\ra)\RT\la\gamma\ra$, $\alpha$ and $\beta$ of type (A), $\gamma$ of type (D).\\
\hline 4 & 48 & $G=(D_8\circ C_4)\RT\la\sigma\ra$, $\sigma$ of type (C).\\
\hline 3 & 49 & $G=C_7\times C_7=\langle\alpha\rangle\times\langle\beta\rangle$, $\alpha$ and $\beta$ of type (A).\\
\hline 1 & 126 & $G=C_{14}\times C_3 \times C_3=\langle\alpha\rangle\times\langle\beta\rangle \times \langle \gamma \rangle$, $\alpha$ of type (A), $\beta$, $\gamma$ of type (C).\\
\hline 0 & 28 & $G=C_{28}=\langle\sigma\rangle$, $\sigma$ of type (A).\\
\hline
\end{tabular}
\end{small}
\end{table}
\end{center}

\begin{center}
\begin{table}[htbp]
\begin{small}
\caption{Quotient curves $\cH_{29}/G$, $G \leq \PGU(3,29)$ }\label{tabella16}
\begin{tabular}{|c|c|c|}
\hline \bf $g$ & $|G|$ & \textrm{structure of $G$} \\
\hline\hline 406 & 1 & \textrm{trivial group.}\\
\hline 196 & 2 & $G=C_2=\langle\sigma\rangle$, $\sigma$ of type (A).\\

\hline 136 & 3 & $G=C_3=\langle\sigma\rangle$, $\sigma$ of type (B1).\\
\hline 135 & 3 & $G=C_3=\langle\sigma\rangle$, $\sigma$ of type (B3).\\
\hline 126 & 3 & $G=C_3=\langle\sigma\rangle$, $\sigma$ of type (A).\\
\hline 98 & 4 & $G=C_4=\langle\sigma\rangle$, $\sigma$ of type (B2).\\
\hline 91 & 4 & $G=C_2 \times C_2=\langle\alpha\rangle \times \langle \beta \rangle $, $\alpha$ and $\beta$ of type (A).\\

\hline 82 & 5 & $G=C_5=\langle\sigma\rangle$, $\sigma$ of type (B1).\\
\hline 70 & 5 & $G=C_5=\langle\sigma\rangle$, $\sigma$ of type (A).\\

\hline 66 & 6 & $G=C_6=\langle\sigma\rangle$, $\sigma$ of type (B1).\\
\hline 61  & 6 & $G=Sym(3)=\langle\alpha\rangle\rtimes\langle\beta\rangle$, $\alpha$ of type (B1), $\beta$ of type (A).\\
\hline 58 & 7 & $G=C_7=\langle\sigma\rangle$, $\sigma$ of type (B2).\\
\hline 56 & 6 & $G=C_6=\langle\sigma\rangle$, $\sigma$ of type (A).\\
\hline 49 & 8 & $G=Q_8=\langle\alpha,\beta\rangle$, $\alpha$ and $\beta$ of type (B2).\\
\hline 45 & 9 & $G=C_3\times C_3=\langle\alpha\rangle\times\langle\beta\rangle$, $\alpha$ of type (B3), $\beta$ of type (B1).\\
\hline 42 & 8 & $G=D_8=C_4\rtimes C_2=\langle\alpha\rangle\rtimes\langle\beta\rangle$, $\alpha$ of type (B2), $\beta$ of type (A).\\
\hline 40 & 10 & $G=C_{10}=\langle\sigma\rangle$, $\sigma$ of type (B1).\\
\hline 36 & 9 & $G=C_3\times C_3=\langle\alpha\rangle\times\langle\beta\rangle$, $\alpha$ and $\beta$ of type (A).\\
\hline 34 & 10 & $G=C_5 \rtimes C_2=\langle\alpha\rangle\rtimes\langle\beta\rangle$, $\alpha$ of type (B1), $\beta$ of type (A).\\
\hline 33 & 12 & $G=Dic_{12}=\langle\alpha,\beta\rangle$, $\alpha$ of type (B1), $\beta$ of type (B2).\\
\hline 31 & 12 & $G=Alt(4) = \langle\alpha\rangle\times\langle\beta\rangle\rtimes\langle\gamma\rangle$, $\alpha$ and $\beta$ of type (A), $\gamma$ of type (B1).\\
\hline 28 & 10 & $G=C_{10}=\langle\sigma\rangle$, $\sigma$ of type (A).\\
\hline 26 & 12 & $G=D_{12}=C_6 \rtimes C_2=\langle\alpha\rangle\rtimes\langle\beta\rangle$, $\alpha$ of type (B1), $\beta$ of type (A).\\
\hline 24 & 15 & $G=C_{15}=\langle\sigma\rangle$, $\sigma$ of type (B1).\\
\hline 22 & 14 & $G=D_{14}=C_7 \rtimes C_2=\langle\alpha\rangle\rtimes\langle\beta\rangle$, $\alpha$ of type (B2), $\beta$ of type (A).\\
\hline 21 & 16 & $G=SD_{16}$, semidihedral group (Sylow $2$-subgroup of $\PGU(3,29)$).\\
\hline 20 & 20& $D=Dic_{20}$. \\
\hline 19 & 20 & $G=C_{10} \times C_2=\langle\alpha\rangle\times\langle\beta\rangle$, $\alpha$ of type (B1), $\beta$ of type (A).\\
\hline 18 & 21 & $G=C_{21}=\langle\sigma\rangle$, $\sigma$ of type (B2).\\
\hline 17 & 24 & $G=SL(2,3)$. \\
\hline 16 & 18 & $G=C_6 \times C_3 = \langle \alpha \rangle \times \langle \beta \rangle$, $\alpha$ and $\beta$ of type (A). \\
\hline 14 & 15 & $G=C_{15}=\langle\sigma\rangle$, $\sigma$ of type (A).\\
\hline 13 & 20 & $G=D_{20}=C_{10} \rtimes C_2=\langle\alpha\rangle\rtimes\langle\beta\rangle$, $\alpha$ of type (B1), $\beta$ of type (A).\\
\hline 12 & 24 & $G=D_8\times C_3=(\langle\alpha\rangle\rtimes\langle\beta\rangle)\times\langle\gamma\rangle$, $\alpha$ of type (B2), $\beta$ and $\gamma$ of type (A). \\
\hline 11 & 30 & $G=D_{30}=C_{15}\rtimes C_2=\langle\alpha\rangle\rtimes\langle\beta\rangle$, $\alpha$ of type (B1), $\beta$ of type (A). \\
\hline 10 & 25 & $G=C_5 \times C_5 = \langle \alpha \rangle \times \langle \beta \rangle$, $\alpha$, $\beta$ of type (A).\\
\hline 8 & 36 & $G=(C_3\times C_3)\rtimes C_4= (\langle\alpha\rangle\times \langle\beta\rangle)\rtimes \langle\gamma\rangle$, $\alpha,\beta$ of type (A), $\gamma$ of type (B2).\\
\hline 7 & 24 & $G=D_{24}=C_{12} \rtimes C_2=\langle\alpha\rangle\rtimes\langle\beta\rangle$, $\alpha$ of type (B2), $\beta$ of type (A).\\
\hline 6 & 36 & $G=C_3 \times C_3 \times C_2 \times C_2.$ \\
\hline 5 & 48 & $G=SD_{16}\rtimes C_3$.\\
\hline 4 & 45 & $G=C_{15} \times C_3=\langle\alpha\rangle\times\langle\beta\rangle$, $\alpha$ and $\beta$ of type (A).\\
\hline 3 & 120 & $G=Q_8 \rtimes C_{15}=Q_8 \rtimes \langle \alpha \rangle $,$\alpha$ of type (B1).\\
\hline 2 & 42 & $G=D_{42}=C_{21} \rtimes C_2=\langle\alpha\rangle\rtimes\langle\beta\rangle$, $\alpha$ of type (B2), $\beta$ of type (A).\\
\hline 1 & 271 & $G=C_{271}=\langle\sigma\rangle$, $\sigma$ of type (B3).\\
\hline 0 & 30 & $G=C_{30}=\langle\sigma\rangle$, $\sigma$ of type (A).\\
\hline
\end{tabular}
\end{small}
\end{table}
\end{center}

\section{New genera for maximal curves}

The results of the previous sections provide new genera for maximal curves over finite fields. To exemplify this fact, we collect in this section new genera for small values of $q$.

\begin{remark}
Tables \ref{tabella9} and \ref{tabella10} partially answer to \cite[Remark 4.4]{ATT}.
In particular, we have obtained a $\mathbb F_{13^2}$-maximal curve of genus $4$ from Prop. \ref{Esiste1}, a $\mathbb F_{13^2}$-maximal curve of genus $5$ from Prop. \ref{Esiste2}, and a $\mathbb F_{16^2}$-maximal curve of genus $16$ from Prop. \ref{Esiste3}.
\end{remark}

\begin{remark}
The value $g=3$ in Table \ref{tabella12} is new in the spectrum of genera of $\mathbb{F}_{13^2}$-maximal curves. In fact this value is given in \cite{DO2} as an application of \cite[Proposition 4.6]{GSX}, but the actual value which provided by \cite[Proposition 4.6]{GSX} is $g=0$. Moreover, it can be checked that if $g(\cH_{13}/G)=3$ then $G \leq \mathcal{M}_{13}$.
\end{remark}

\begin{remark}
The genera $g=3$, $g=8$ and $g=22$ provided in Table \ref{tabella17} for $\mathbb{F}_{25^2}$-maximal curves are new with respect to \cite{DO2}.
\end{remark}


\end{document}